\numberwithin{equation}{section}
\numberwithin{equation}{subsection}
\theoremstyle{plain}
\newtheorem{lemma}[equation]{Lemma}
\newtheorem{proposition}[equation]{Proposition}
\newtheorem{corollary}[equation]{Corollary}
\newtheorem{fact}[equation]{Fact}
\newtheorem{thm}[equation]{Theorem}
\newtheorem{lem}[equation]{Lemma}
\newtheorem{prop}[equation]{Proposition}
\newtheorem{clm}[equation]{Claim}
\theoremstyle{definition}
\newtheorem{example}[equation]{Example}
\newtheorem{remark}[equation]{Remark}
\def\C{\mathbb C}
\def\Q{\mathbb Q}
\def\R{\mathbb R}
\def\Z{\mathbb Z}
\def\P{\mathbb P}
\newcommand{\calb}{{\mathcal B}}
\newcommand{\calv}{{\mathcal V}}
\newcommand{\calt}{{\mathcal T}}
\newcommand{\calk}{{\mathcal K}}
\newcommand{\cali}{{\mathcal I}}
\newcommand{\calj}{{\mathcal J}}
\newcommand{\calO}{{\mathcal O}}
\newcommand{\calS}{{\mathcal S}}
\newcommand{\caln}{\mathcal{N}}
\newcommand{\bt}{{\mathbf t}}\newcommand{\bx}{{\mathbf x}}
\newcommand{\bZ}{{\mathbb{Z}}}
\newcommand{\bQ}{{\mathbb{Q}}}
\newcommand{\bC}{{\mathbb{C}}}
\author{Tam\'as L\'aszl\'o}
\address{BCAM - Basque Center for Applied Math.,
Mazarredo, 14 E48009 Bilbao, Basque Country – Spain}
\email{tlaszlo@bcamath.org}
\author{J\'anos Nagy}
\address{Central European University, Dept. of Mathematics,  Budapest, Hungary}
\email{nagy\textunderscore janos@phd.ceu.edu}
\author{Andr\'as N\'emethi}
\address{Alfr\'ed R\'enyi Institute of Mathematics,
Hungarian Academy of Sciences,
Re\'altanoda utca 13-15, H-1053, Budapest, Hungary \newline
 \hspace*{4mm} ELTE - University of Budapest, Dept. of Geometry, Budapest, Hungary \newline \hspace*{4mm}
BCAM - Basque Center for Applied Math.,
Mazarredo, 14 E48009 Bilbao, Basque Country – Spain}
\email{nemethi.andras@renyi.mta.hu }
\title{Surgery formulae for the Seiberg--Witten invariant of plumbed 3--manifolds}
\begin{document}

\keywords{normal surface singularities, links of singularities,
plumbing graphs, rational homology spheres, Seiberg--Witten invariant, Poincar\'e series, quasipolynomials, surgery formula, periodic constant}

\subjclass[2010]{Primary. 32S05, 32S25, 32S50, 57M27
Secondary. 14Bxx, 14J80, 57R57}

\begin{abstract}
Assume that $M(\calt)$  is a rational homology sphere
plumbed 3--manifold associated with a connected negative definite graph $\mathcal{T}$.
We consider the combinatorial multivariable Poincar\'e series associated with $\mathcal{T}$ and its counting functions, which encode rich topological information.
Using the `periodic constant'
of the series (with reduced variables) we prove surgery formulae for the normalized Seiberg--Witten invariants: the periodic constant appears as
 the difference of the Seiberg--Witten  invariants associated with
$M(\calt)$ and $M(\calt\setminus \cali)$, where $\cali$ is an arbitrary subset of the set of vertices of $\calt$.

\end{abstract}

\maketitle

\linespread{1.2}


\pagestyle{myheadings} \markboth{{\normalsize T. L\'aszl\'o, J. Nagy, A. N\'emethi}} {{\normalsize Surgery formulae}}


\section{Introduction}

\subsection{}
Surgery formulae for 3--manifolds, focusing on certain numerical or cohomological
invariant are key tools in
low dimensional topology.  They can  serve e.g. in the identification of  invariants, or in the proof of the coincidence of two differently
defined one,
but also in concrete computations of the invariants for certain families of
manifolds.
Some numerical surgery formulae are consequences of cohomological exact sequences,
where the involved  cohomological theories are categorifications of the
corresponding numerical invariants.
E.g., as the Seiberg--Witten invariant admits several categorifications --- the Heegaard--Floer homology
of Ozsv\'ath and Szab\'o, or the monopole homology of Kronheimer and Mrowka, or
(in the case of plumbed manifolds) the lattice cohomology introduced by the
third author ---,  exact sequences in these theories induce surgery formulae
for the Seiberg--Witten invariant as well, see e.g.  \cite{OSz,Greene,Nexseq}.
Usually, such exact triangles compare the invariants of three surgery 3--manifolds, see again \cite{OSz}.

However, for negative definite graph manifolds, one can formulate a
 different type of surgery formula, which is not imposed by purely topological theories
 and it has no extension  (by the knowledge of the authors) to arbitrary 3--manifolds.
It has its roots in complex algebraic/analytic  geometry by surgery formulae associated with
analytic invariants, where certain Hilbert series play crucial role, see e.g. \cite{Ok}.
Using the fact that negative definite graph manifolds are exactly the links of normal surface singularities,
one can try to transport such ideas from the  analytic theory giving rise to purely topological
results. By the new  formula we present,
 the difference of  the Seiberg--Witten invariants of two surgery manifolds
 is determined  from a  multivariable zeta--type
series, which is combinatorially defined from the graph. This series
 is the topological analogue of a Poincar\'e series of
a  multivariable divisorial filtration (of  a local analytic algebra), 
 but in this topological/combinatorial discussion
the analytic part can be totally neglected (however, for such
 connections see
\cite{BN,Nfive,NJEMS,NN1,NOSZ,trieste,NPS}).

In the sequel $M(\calt)$ denotes
a plumbed 3--manifold associated with
a connected negative definite graphs $\calt$. We will assume that $M(\calt)$
is a rational homology sphere (hence $\calt$ is a tree of $S^2$'s).

The series $Z(\bt)$, which guides several topological invariants of $M(\calt)$,
is defined combinatorially from $\calt$, see
 (\ref{eq:1.1}). The number of variables $\{t_v\}_v$ is indexed by the set of vertices $\calv$ of $\calt$. The series
$Z(\bt)$ decomposes
as a sum $Z_h(\bt)$ according to the spin$^c$--structures of $M(\calt)$.

If $S(\bt)=\sum_{l'}s(l')\bt^{l'}$ is a multivariable series, then its counting function $Q(l')$ is defined by $Q(l'_0)=\sum _{l'\not\geq l'_0} s(l')$.
We say that $S$ admits a quasipolynomial (in the cone $\calk$) if for elements $l'_0$ from
a shifted cone of type $ l^*+\calk$ the value $Q(l'_0)$ equals the value
$\mathfrak{Q}(l'_0)$ of a quasipolynomial $\mathfrak{Q}$ (for precise definitions
see \ref{ss:perConst}). In this case we define
the {\it periodic constant } of $S$ (associated with $\calk$) as $\mathfrak{Q}(0)$.
The construction creates a bridge between topological invariants of $M(\calt)$ and generalized
Ehrhart theory of counting functions and their quasipolynomials (see e.g. \cite{LN}).
The above construction applied for $Z_h$  realizes a deep
connection between low dimensional topology and multivariable (Poincar\'e type) series and their   periodic constants. Indeed, by \cite{NJEMS},
the periodic constant of $Z_h$ (associated with the Lipman cone of
$\calt$) equals the normalized Seiberg--Witten invariant
of $M$ (where $h$ indexes the corresponding spin$^c$--structures).
For the precise  statement see Theorems \ref{th:JEMS} and \ref{th:NJEMSThm}. 

The surgery formula which (partly) motivated our research is the following  \cite{BN}.
Let us fix a vertex $v\in\calv$ and consider the graph $\calt\setminus v$ obtained from
$\calt$ by eliminating $v$. Then the difference of the
 normalized Seiberg--Witten invariants of $M(\calt)$ and
$M(\calt\setminus v)$ can be computed as the periodic constant of the one--variable series $Z_h(\bt)|_{t_u=1, \, u\not=v}$ (cf. Theorem \ref{th:BNsurg}).

One of the main results
of the present work
 (see Theorem \ref{th:MainResult})
 is a common generalization of the above results. We fix an arbitrary subset $\cali\subset \calv$ of the vertices of $\calt$, and we prove that the difference of the
 normalized Seiberg--Witten invariants of $M(\calt)$ and
$M(\calt\setminus \cali)$ can be computed as the periodic constant of the
series with reduced variables $Z_h(\bt)|_{t_u=1, \, u\not\in \cali}$.
(Note that theory of quasipolynomials and also the concrete computation of their periodic constants
is much harder in the multivariable case.)
In the case when $\cali$ is the set of nodes of $\calt$ then we recover the `reduction theorem' from \cite{LNRed}.
In the cases when $\calt\setminus \cali$
contains only strings or `rational graphs' (that is, $M(\calt\setminus \cali)$
is an L--space) then the formula simplifies, and we remain with a
closed formula of the normalized Seiberg--Witten invariants of $M(\calt)$ in terms of $Z_h$ with reduced variables.

It is important to mention that the `classical'
exact triangles (like in \cite{OSz}), hence their surgery formulae too,  involve manifolds which are
modified along a knot. This, in the language of plumbing graph  means modification along one  of the
vertices. On the other hand, our formula is more general, since $\cali$ can be an arbitrary subset of vertices.
 Additionally, our formulae separates
the involved spin$^c$--structures (while exact triangles usually mix them).

\subsection{} The organization of the paper is the following. Section
\ref{s:prliminaries} contains preliminaries regarding plumbing graphs, manifolds,
their Seiberg--Witten invariants, and also Poincar\'e series and their periodic constants.
We also recall several key Seiberg--Witten invariant formula which will be used and generalized later.

In section \ref{s:3} we formulate the new results
and we list several applications.
We split the presentation into two steps: we give a `numerical surgery formula'
(Theorem \ref{th:MainResult})
targeting the Seiberg--Witten invariant, and also another surgery formula, which
is a lift of the numerical identity to the level of quasipolynomials
(Theorem \ref{prop:1}).

In sections \ref{s:convMCF}--\ref{s:SurgMCF}--\ref{sec:MCF} we prove the new
 results. In the proof we decompose the counting function into an alternating sum
of `modified counting functions'. In section \ref{s:convMCF} prove a `convexity
property' of these sums, in section  \ref{s:SurgMCF} a surgery formula for them,
and finally in section  \ref{sec:MCF} we finish the proof.

Section \ref{s:ExProof} treats the case when $\cali$ is the set of nodes
(hence $\calt\setminus \cali$ are strings), while
section \ref{s:8} the case when all subgraphs $\calt\setminus \cali$ are rational.
In these cases several vanishing results are established. Here several
 computations are based on the (positive answer to the)
  Seiberg--Witten Invariant Conjecture from \cite{trieste,NCL,BN} and  we also explain  (and use)
  the connections
  with the analytic (singularity theoretical) counterpart as well.

The last section treats the case of numerically Gorenstein graphs, where
some additional nice symmetries and dualities appear.

\subsection*{Acknowledgements}
TL was supported by ERCEA Consolidator Grant 615655 – NMST and
by the Basque Government through the BERC 2014-2017 program and by Spanish
Ministry of Economy and Competitiveness MINECO: BCAM Severo Ochoa
excellence accreditation SEV-2013-0323.

AN  was partially supported by
ERC Adv. Grant LDTBud of A. Stipsicz at R\'enyi Institute of Math., Budapest.
JN was partially supported by NKFIH Grant K119670,
JN and AN were  partially supported by  NKFIH Grant  K112735.

\section{Preliminaries}\label{s:prliminaries}

For more details regarding plumbing graphs, plumbed manifolds and their relations with normal surface singularities see \cite{BN,EN,Nfive,NJEMS,NN1,NOSZ,trieste,NPS,NWsq};
for Poincar\'e series see also \cite{CDGPs,CDGEq}.

\subsection{Plumbing graphs. Plumbed 3--manifolds.}\label{ss:PGP}
We fix a connected plumbing graph $\mathcal {T}$
whose associated intersection matrix is negative definite. We denote the corresponding plumbed 3--manifold by $M=M(\calt)$. In this article we always assume that $M$ is an oriented  rational homology sphere,
equivalently, $\mathcal{T}$ is a tree with all genus
decorations  zero.

We use the notation $\mathcal{V}$ for the set of vertices, $\delta_v$ for the valency of a vertex $v$, and $\mathcal{N}$ for the set of nodes, i.e. vertices with $\delta_v\geq 3$. End--vertices are defined by $\delta_v=1$.

Let $\widetilde{X}$ be the plumbed 4--manifold with boundary associated with
$\mathcal{T}$, hence $\partial \widetilde{X} = M$. Its second
homology $L:=H_2(\widetilde{X},\mathbb{Z})$ is a lattice, freely generated by the classes of 2--spheres $\{E_v\}_{v\in\mathcal{V}}$, with a negative definite intersection form  $(\,,\,)$. Furthermore, $H^2(\widetilde{X},\mathbb{Z})$
can be identified with the dual lattice  $L':={\rm Hom}_\bZ(L,\bZ)=\{l'\in L\otimes\bQ\,:\, (l',L)\in\bZ\}$. It  is generated
by the (anti)dual classes $\{E^*_v\}_{v\in\mathcal{V}}$ defined by $(E^{*}_{v},E_{w})=-\delta_{vw}$, the opposite of the Kronecker symbol.
One has the inclusions  $L\subset L'\subset L\otimes \bQ$, and  $H_1(M,\mathbb{Z})\simeq L'/L$, denoted by $H$. We write $[x]$
for the class of $x\in L'$ in $H$.

For any $h\in H$ let $r_h\in L'$ be its unique representative in
the `semi--open cube'
$\{\sum_vl'_vE_v\in L'\,:\, l'_v\in[0,1)\}$.

$L'$ carries a partial ordering induced by $l'=\sum_vl'_vE_v\geq 0$ if and only if  each $l'_v\geq 0$.

\subsection{The series $Z(\mathbf{t})$.}
The  \emph{multivariable topological Poincar\'e series} is the
Taylor expansion $Z(\mathbf{t})=\sum_{l'} z^\calt(l')\bt^{l'}
\in\bZ[[L']] $ at the  origin of the rational function
\begin{equation}\label{eq:1.1}
f(\mathbf{t})=\prod_{v\in \mathcal{V}} (1-\mathbf{t}^{E^*_v})^{\delta_v-2},
\end{equation}
where
$\bt^{l'}:=\prod_{v\in \mathcal{V}}t_v^{l'_v}$  for any $l'=\sum _{v\in \mathcal{V}}l'_vE_v\in L'$ ($l'_v\in\bQ$).
It decomposes as $Z(\mathbf{t})=\sum_{h\in H}Z_h(\mathbf{t})$, where $Z_h(\mathbf{t})=\sum_{[l']=h}z^\calt (l')\bt^{l'}$. The expression
(\ref{eq:1.1}) shows that  $Z(\mathbf{t})$ is supported in the \emph{Lipman cone} $\mathcal{S}':=\mathbb{Z}_{\geq0}\langle E^{*}_{v}\rangle_{v\in\mathcal{V}}$.
Since  $I$ is negative definite, all the entries of
$E_v^*$ are strict positive, hence $\calS'\subset
\{\sum_vl'_vE_v\,:\, l'_v>0\}\cup\{0\}$.
Thus, for any $x$, $\{l'\in \calS'\,:\, l'\not\geq x\}$ is finite, cf. \cite[(2.1.2)]{NJEMS}.

Fix $h\in H$.  We define a `counting function'
of the coefficients of $Z_h$ by
\begin{equation}\label{eq:countintro}
Q^\calt_h: L'_h:=\{x \in L'\,:\, [x]=h\}\to \bZ, \ \ \ \
Q^\calt_{h}(x)=\sum_{l'\ngeq x,\, [l']=h} z^\calt(l').
\end{equation}

For the motivation of the truncation  $\{l'\ngeq x,\, [l']=h\}$  see
the results below (e.g. \ref{eq:SUM}) or \cite{NCL}.

\subsection{Seiberg--Witten invariants of $M$.}
The 4--manifold $\widetilde{X}$ has a complex structure.
In fact, any such $M(\calt)$ is the link of a complex normal surface singularity
$(X,o)$, which has a resolution $\widetilde{X}\to X$ with
resolution graph $\calt$ (see e.g. \cite{Nfive}). In this analytic case
$\widetilde{X}$ is a smooth complex 2--manifold, and as a real smooth manifold is the disc--plumbed 4-manifold associated with $\calt$.
Let
$K\in L'$ be its canonical cycle. Though the complex structure
(with fixed $\calt$) is not unique,
 $K$ is determined topologically by $L$ via the adjunction formulae $(K+E_v,E_v)+2=0$ for all $v$. Let
$\widetilde{\sigma}_{can}$ be the {\it canonical
$spin^c$--structure on $\widetilde{X}$} identified by $c_1(\widetilde{\sigma}_{can})=-K$,
and let $\sigma_{can}\in \mathrm{Spin}^c(M)$
 be its restriction to $M$, called the {\it canonical
$spin^c$--structure on $M$}. $\mathrm{Spin}^c(M)$ is an $H$--torsor
 with action denoted by $*$.

 We denote by $\mathfrak{sw}_{\sigma}(M)\in \bQ$ the
\emph{Seiberg--Witten invariant} of $M$ indexed by the $spin^c$--structures $\sigma\in {\rm Spin}^c(M)$ (cf. \cite{Lim, Nic04}).
(We will use the sign convention of \cite{BN,NJEMS}.)

In the last years several combinatorial expressions were established for the Seiberg--Witten invariants.   For rational homology spheres,
Nicolaescu \cite{Nic04} showed  that $\mathfrak{sw}(M)$ is
equal to the Reidemeister--Turaev torsion normalized by the Casson--Walker invariant. In the case when $M$ is a negative
definite plumbed rational homology sphere, combinatorial formula for Casson--Walker invariant in terms of the plumbing graph can be found in Lescop
\cite{Lescop}, and  the Reidemeister--Turaev torsion is determined by N\'emethi and Nicolaescu \cite{NN1} using Dedekind--Fourier sums.

A different  combinatorial formula of $\{\mathfrak{sw}_\sigma(M)\}_\sigma$ was proved  in \cite{NJEMS} using qualitative properties of the coefficients of the series $Z(\mathbf{t})$.

\begin{thm} \label{th:JEMS}\ \cite{NJEMS}
For any $l'\in -K+ \textnormal{int}(\mathcal{S}')$
\begin{equation}\label{eq:SUM} - Q_{[l']}(l')=
\frac{(K+2l')^2+|\mathcal{V}|}{8}+\mathfrak{sw}_{[-l']*\sigma_{can}}(M).
\end{equation}
\end{thm}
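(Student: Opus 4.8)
\emph{Outline of the proof.} Fix $h\in H$ and put $\sigma:=(-h)*\sigma_{can}$; since $[-l']=-h$ for every $l'\in L'_h$, the Seiberg--Witten term on the right side is the \emph{constant} $\mathfrak{sw}_\sigma(M)$. Writing $\frac{(K+2l')^2+|\mathcal{V}|}{8}=\frac{K^2+|\mathcal{V}|}{8}+\frac{(l',l'+K)}{2}$, the asserted identity becomes
\begin{equation*}
\frac{(l',l'+K)}{2}+Q_{h}(l')=-\frac{K^2+|\mathcal{V}|}{8}-\mathfrak{sw}_\sigma(M)\qquad\text{for all }l'\in(-K+\textnormal{int}(\mathcal{S}'))\cap L'_h.
\end{equation*}
The plan is to prove this in two steps: \textbf{(I)} the left side does not depend on $l'$ on that shifted cone, hence equals a constant $c_h\in\bQ$; \textbf{(II)} $c_h=-\frac{K^2+|\mathcal{V}|}{8}-\mathfrak{sw}_\sigma(M)$.

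For \textbf{(I)}, recall that $Z_h$ is the Taylor expansion, supported in the Lipman cone $\mathcal{S}'$, of the rational function $f$ of \eqref{eq:1.1}, whose denominator is a product of factors $1-\mathbf{t}^{E^*_v}$ over the vertices with $\delta_v<2$. Consequently $z^{\calt}$, and --- after resolving the truncation $\{l''\ngeq l',\,[l'']=h\}$ by inclusion--exclusion over the set of violated coordinates (each resulting cone sum being finite since $\mathcal{S}'\subset\{l'>0\}\cup\{0\}$) --- also the counting function $Q_h$, agree with a quasipolynomial on a shifted subcone of $\mathcal{S}'$, by generalized Ehrhart theory \cite{LN}. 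Since $\sum_v(\delta_v-2)=-2$, the function $f$ has a pole of order $2$ along $\mathbf{t}=\mathbf{1}$, so this quasipolynomial has degree at most $2$; a Riemann--Roch type residue computation --- using the adjunction identity $\sum_v(\delta_v-2)E^*_v=-(K+\sum_vE_v)$ and the negative definiteness of the form --- identifies its part of degree $\ge 1$ with $-\frac{(l',l'+K)}{2}$. Upgrading this from ``quasipolynomial on a subcone'' to ``equal to the honest polynomial $-\frac{(l',l'+K)}{2}+c_h$ (a genuine constant $c_h$, with no periodic correction) on \emph{all} of $-K+\textnormal{int}(\mathcal{S}')$'' rests on the special stability and positivity properties of the coefficients $z^{\calt}(l')$; this is the technical heart of the argument.

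For \textbf{(II)}, by Nicolaescu \cite{Nic04} the invariant $\mathfrak{sw}_\sigma(M)$ equals the sign--refined Reidemeister--Turaev torsion of $(M,\sigma)$ corrected by the Casson--Walker invariant, and by N\'emethi--Nicolaescu \cite{NN1} together with Lescop \cite{Lescop} the right side is an explicit combinatorial function of $\calt$: a sum of Fourier--Dedekind sums indexed by $\mathcal{V}$ (with the coupling governed by $-I^{-1}$), plus the standard term $-\frac{K^2+|\mathcal{V}|}{8}$. On the other side, by step \textbf{(I)} the constant $c_h=\frac{(l',l'+K)}{2}+Q_h(l')$ is --- up to the same term $-\frac{K^2+|\mathcal{V}|}{8}$ --- the periodic constant of $Z_h$ associated with $\mathcal{S}'$, which can be computed directly from $f$ by a residue procedure: evaluate $f$ on a generic one--parameter degeneration $t_v=\exp(-\ell_v s)$ (with $\ell_v>0$), expand the result in $s$ as a Laurent series, extract the constant term, and resum the contributions of the finitely many roots of unity entering the computation. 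This produces precisely the same Fourier--Dedekind expression, whence $c_h=-\frac{K^2+|\mathcal{V}|}{8}-\mathfrak{sw}_\sigma(M)$, which is the claim.

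The main obstacle is step \textbf{(II)}: bringing the torsion formula of \cite{NN1} and the residue computation of the periodic constant to a common Fourier--Dedekind normal form requires Dedekind--Rademacher reciprocity and a careful accounting of the Casson--Walker contribution. A secondary but indispensable point is the sharp form of step \textbf{(I)} --- that the agreement holds on the \emph{entire} shifted cone $-K+\textnormal{int}(\mathcal{S}')$ and with a genuine constant rather than a $1$--periodic function --- which uses the fine structure of $Z_h$ rather than general lattice--point counting.
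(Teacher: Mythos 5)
The paper does not prove Theorem \ref{th:JEMS} at all: it is quoted verbatim from \cite{NJEMS}, so the only meaningful comparison is with the original proof there. Your outline does follow the strategy of that source (reduce the identity to showing that $Q_h$ agrees with an explicit quadratic polynomial plus a constant on the shifted cone, then identify the constant with the Seiberg--Witten invariant through Nicolaescu's torsion-plus-Casson--Walker description and the Dedekind--Fourier sum formulas of \cite{NN1} and \cite{Lescop}). However, as written your text is a plan rather than a proof: both of your steps defer exactly the substantive content. In step (I), general Ehrhart/vector-partition theory (\cite{LN}, \cite{SzV}) only gives a \emph{piecewise} quasipolynomial subordinate to a chamber decomposition of $\mathcal{S}'_{\R}$; the claim that on all of $-K+\textnormal{int}(\mathcal{S}')$ the counting function equals one honest polynomial with a genuine constant term is essentially the force of the theorem itself. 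Indeed the paper notes, right after stating the result, that the fact that the whole of $\mathcal{S}'_{\R}$ behaves as a single chamber is a \emph{consequence} of Theorem \ref{th:JEMS}, not an input one may borrow from general lattice-point counting --- so appealing to \cite{LN} here is circular, and ``special stability and positivity properties of the coefficients'' is a placeholder, not an argument. The heuristic ``pole of order $2$ along $\mathbf{t}=\mathbf{1}$, hence degree at most $2$'' is likewise not a valid degree bound in the multivariable setting without substantial extra work.

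Step (II) has the same character: the entire identification of the residue computation of the periodic constant of $Z_h$ with the Fourier--Dedekind expression for the Reidemeister--Turaev torsion corrected by the Casson--Walker invariant is asserted, not performed. This comparison (splitting $Z_h$ by characters of $H$, handling the trivial character separately to produce the quadratic and $\frac{K^2+|\mathcal{V}|}{8}$ terms, and matching the nontrivial characters with the torsion via reciprocity for Dedekind-type sums) is precisely the technical core of \cite{NN1} and \cite{NJEMS}, and you explicitly flag it as ``the main obstacle'' without offering an argument. So the approach is the right one and matches the cited original, but with both key steps left open the proposal does not constitute a proof of \eqref{eq:SUM}.
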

\noindent If we fix $h\in H$ and we write  $l'=l+r_{h}$ with $l\in L$,
then the right hand side of (\ref{eq:SUM}) is
 a multivariable quadratic polynomial on $L$,
 a fact which
  will be exploited conceptually  next.

\subsection{\bf Periodic constants.}\label{ss:perConst}
A key tool of the present article is an invariant associated with series motivated by properties of Hilbert--Samuel functions used in algebraic geometry and
singularity theory.
 This also creates a bridge with Ehrhart theory and the properties of its qusipolynomials.
It is called the {\it periodic constant} of the series.
For  one--variable series they were introduced
in \cite{NOk,Ok}, see also \cite{BN},
the multivariable generalization is treated in  \cite{LN}.

Let $S(t)=\sum_{l\geq0}c_l t^l \in
\mathbb{Z}[[t]]$ be a formal power series with one variable.
Assume that for some $p\in \mathbb{Z}_{>0}$ the counting function
$Q^{(p)}(n):=\sum_{l=0}^{pn-1}c_l$ is a polynomial $\mathfrak{Q}^{(p)}$
in $n$. Then the constant term $\mathfrak{Q}^{(p)}(0)$ is independent of $p$ and it is called the \emph{periodic
constant} $\mathrm{pc}(S)$ of the series $S$. E.g.,
if $S(t)$ is a finite polynomial, then $\mathrm{pc}(S)$ exists and it equals
$S(1)$.
If the coefficients of $S(t)$ are given by a Hilbert function $l\mapsto c(l)$,
which admits a Hilbert polynomial $H(l)$ with $c(l)=H(l)$ for $l\gg 0$, then
$S^{reg}(t)=\sum_{l\geq 0}H(l)t^l$ has zero periodic constant and
$\mathrm{pc}(S)=\mathrm{pc}(S-S^{reg})+\mathrm{pc}(S^{reg})=(S-S^{reg})(1)$,
measuring the difference between the Hilbert function and Hilbert polynomial.

For the multivariable case we consider a (negative) definite lattice
$L=\Z\langle E_v\rangle_v$,
 its dual lattice $L'$, a series $S(\mathbf{t})\in \bZ[[L']]$
(e.g. $Z(\bt)$), and its well-defined counting function $Q_h=Q_h(S(\bt))$ as in (\ref{eq:countintro}) for fixed $h\in L'/L$.
Assume that there exist
 a real cone $\mathcal{K}\subset L'\otimes\mathbb{R}$ whose affine closure is  top--dimensional,  $l'_* \in \mathcal{K}$,
 a sublattice $\widetilde{L} \subset L$ of finite index,
and  a quasipolynomial $\mathfrak{Q}_h(l)$ ($l\in \widetilde{L}$)
such that $ Q_h(l+r_h)=\mathfrak{Q}_h(l)$  for any
$l+r_h\in (l'_* +\mathcal{K})\cap (\widetilde {L}+r_h)$. Then we say that
 the counting function $Q_h$ (or just $S_h(\mathbf{t})$)
 admits a quasipolynomial in $\mathcal{K}$, namely $\mathfrak{Q}_h(l)$,  and
also  an (equivariant, multivariable)  {\em periodic constant}
associated with $\mathcal{K}$,  which is defined by
\begin{equation}\label{eq:PCDEF}
\mathrm{pc}^{\mathcal{K}}(S_h(\bt)) :=\mathfrak{Q}_h(0).
\end{equation}
 The definition does not depend on the choice of the sublattice $\widetilde L$, 
 which corresponds to the choice of
 $p$ in the one--variable case.
 This is responsible for the name `periodic' in the definition.
 The definition is independent of the choice of $l'_*$ as well.

By general theory of multivariable Ehrhart-type quasipolynomials
(counting special coefficients  of lattice points
 in polytopes attached to $Z(\mathbf{t})$)
 one can construct a conical chamber decomposition of the space $L'\otimes\mathbb{R}$, such that each cone satisfies the above definition
 (hence provides a periodic constant), for details see \cite{LN} or \cite{SzV}.
This decomposition, in principle,  divides $\mathcal{S}'_{\mathbb{R}}
:=\mathcal{S}'\otimes\mathbb{R}$
into several sub--cones (hence, providing different quasipolynomials and periodic constants associated with these sub--cones  of $\mathcal{S}'_{\mathbb{R}}$).
However, Theorem \ref{th:JEMS} guarantees that this is not the case,
the whole $\mathcal{S}'_{\mathbb{R}}$ is a unique chamber (cf. also with \cite{LSz}).
Hence, Theorem \ref{th:JEMS} reads as follows.
\begin{thm}\label{th:NJEMSThm} \ \cite{NJEMS}
The counting function
 of $Z_h(\bt)$ in the cone $S'_{\mathbb{R}}$
admits  the (quasi)polynomial
\begin{equation}\label{eq:SUMQP} \mathfrak{Q}_{h}(l)=
-\frac{(K+2r_h+2l)^2+|\mathcal{V}|}{8}-\mathfrak{sw}_{-h*\sigma_{can}}(M),
\end{equation}
whose periodic constant is
\begin{equation}\label{eq:SUMQP2}
\mathrm{pc}^{S'_{\mathbb{R}}}(Z_h(\mathbf{t}))=\mathfrak{Q}_h(0)=
-\mathfrak{sw}_{-h*\sigma_{can}}(M)-\frac{(K+2r_h)^2+|\mathcal{V}|}{8}.
\end{equation}
\end{thm}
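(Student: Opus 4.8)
The plan is to recognize Theorem~\ref{th:NJEMSThm} as a direct reformulation of Theorem~\ref{th:JEMS} in the language of the equivariant periodic constant introduced in~\ref{ss:perConst}. First I would fix $h\in H$ and pass to the reduced variable: every $l'\in L'$ with $[l']=h$ is written uniquely as $l'=l+r_h$ with $l\in L$. Since $K+2l'=(K+2r_h)+2l$ and $[-l']=-h$, identity~(\ref{eq:SUM}) becomes
\[
Q_h(l+r_h)=-\frac{(K+2r_h+2l)^2+|\mathcal{V}|}{8}-\mathfrak{sw}_{-h*\sigma_{can}}(M)=:\mathfrak{Q}_h(l),
\]
valid for all $l\in L$ with $l+r_h\in -K+\mathrm{int}(\mathcal{S}')$. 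For fixed $h$ the right-hand side is an honest quadratic polynomial in $l\in L$ --- all the $h$-dependence is absorbed into the translation by $r_h$ --- so $\mathfrak{Q}_h$ qualifies as a (quasi)polynomial in the sense of~\ref{ss:perConst}, with the sublattice taken to be $\widetilde L=L$.

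Next I would verify the shifted-cone hypothesis of~\ref{ss:perConst} with $\mathcal{K}=\mathcal{S}'_{\mathbb{R}}$. Its affine closure is top-dimensional since all entries of the $E^*_v$ are strictly positive (\ref{ss:PGP}). The cones $\mathcal{S}'_{\mathbb{R}}$ and $-K+\mathrm{int}(\mathcal{S}')$ are both translates of the Lipman cone and share all interior directions, so their intersection contains a point $l'_*\in\mathcal{S}'_{\mathbb{R}}$ with $l'_*+K\in\mathrm{int}(\mathcal{S}')$; for any such point $l'_*+\mathcal{S}'_{\mathbb{R}}\subseteq -K+\mathrm{int}(\mathcal{S}')$, because $\mathrm{int}(\mathcal{S}')+\mathcal{S}'_{\mathbb{R}}\subseteq\mathrm{int}(\mathcal{S}')$. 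Hence the displayed identity $Q_h(l+r_h)=\mathfrak{Q}_h(l)$ holds on all of $(l'_*+\mathcal{S}'_{\mathbb{R}})\cap(L+r_h)$, which is exactly the condition required in the definition of the periodic constant. Therefore $Z_h(\bt)$ admits in $\mathcal{S}'_{\mathbb{R}}$ the quasipolynomial $\mathfrak{Q}_h$ of~(\ref{eq:SUMQP}), and evaluating at $l=0$ yields
\[
\mathrm{pc}^{\mathcal{S}'_{\mathbb{R}}}(Z_h(\bt))=\mathfrak{Q}_h(0)=-\mathfrak{sw}_{-h*\sigma_{can}}(M)-\frac{(K+2r_h)^2+|\mathcal{V}|}{8},
\]
which is~(\ref{eq:SUMQP2}).

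Finally, for the statement implicit in the theorem that $\mathcal{S}'_{\mathbb{R}}$ is a \emph{single} chamber --- rather than a union of sub-cones carrying a priori different quasipolynomials --- I would combine the above with the general Ehrhart-type theory of~\cite{LN,SzV}: it produces a conical chamber decomposition of $L'\otimes\mathbb{R}$ on each cell of which $Q_h(\,\cdot\,+r_h)$ agrees with some quasipolynomial on a translate of that cell. Every cell meeting $\mathrm{int}(\mathcal{S}')$ contains a full-dimensional subset of $-K+\mathrm{int}(\mathcal{S}')$, where by the first step its quasipolynomial coincides with $\mathfrak{Q}_h$; since a quasipolynomial on a finite-index sublattice is determined by its restriction to any full-dimensional subset, that cell's quasipolynomial equals $\mathfrak{Q}_h$, and all such cells merge into one. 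I expect this merging step, rather than the algebra, to be the only genuine obstacle, and it hinges entirely on the strength of Theorem~\ref{th:JEMS}: it provides the polynomial formula on a whole open subcone, not merely along a ray or on a lower-dimensional locus.
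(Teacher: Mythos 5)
Your proposal is correct and follows essentially the same route as the paper: Theorem~\ref{th:NJEMSThm} is obtained there exactly by rewriting \eqref{eq:SUM} with $l'=l+r_h$ as a quasipolynomial identity valid on a shifted cone inside $-K+\mathrm{int}(\mathcal{S}')$, and by invoking the Ehrhart-type chamber decomposition of \cite{LN,SzV} together with Theorem~\ref{th:JEMS} to conclude that $\mathcal{S}'_{\mathbb{R}}$ is a single chamber, whence $\mathrm{pc}^{\mathcal{S}'_{\mathbb{R}}}(Z_h)=\mathfrak{Q}_h(0)$. Your write-up merely makes explicit (the choice of $l'_*$ and the merging of chambers) what the paper leaves as a remark before the statement.
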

The right hand side of (\ref{eq:SUMQP2}) with opposite sign
is called the  {\it $r_h$--normalized Seiberg--Witten invariant} of $M$.

\subsection{\bf Reduced Poincar\'e series.}\label{ss:rps}
Fix $h\in H$. For any $\cali\subset \calv$, $\cali\not=\emptyset$,
  we define the \emph{reduced rational function}
$$f_h(\mathbf{t}_{\cali}):=f_h(\mathbf{t})|_{t_v=1,v\notin \cali}$$
and its Taylor expansion $Z_h(\mathbf{t}_{\cali})$,
called the \emph{reduced Poincar\'e series}.
Note that $Z_h(\mathbf{t}_{\cali})$ can be obtained as $Z_h(\mathbf{t})|_{t_v=1,v\notin \cali}$ as well
(this is well defined:  the
 summations of the corresponding coefficients are {\it finite}, since
  $Z(\bt)$ is supported on $\calS'$). Also,
it is important to notice that
before the elimination of certain variables, we have to decompose the series $Z(\mathbf{t})=\sum_{h}Z_h(\mathbf{t})$ into its components $Z_h(\bt)$,
since the reduced (total)
series $Z(\bt_{\cali})$ does not contain sufficient information, which might provide the decomposition into its components $\{Z_h(\bt_\cali)\}_h$.

For any such $\cali$, one defines several operators connecting the different lattices. First, we define  the projection (along the $E$--coordinates)  $\pi_{\cali}:\mathbb{R}\langle E_v\rangle_{v\in\mathcal{V}}\to \mathbb{R}\langle E_v\rangle_{v\in\cali}$, denoted also as
 $x\mapsto x|_{\cali}$, by $\sum_{v\in \calv}l_vE_v\mapsto \sum_{v\in\cali}l_vE_v$.
Note that if $\cali$ is identified with the set
of vertices of a subgraph $\calt_\cali$, then $\pi_\cali$ does not preserve
the intersection form in the corresponding lattices $L(\calt)$ and $L(\calt_\cali)$.

 $\pi_\cali$  provides the
`projected (real) Lipman cone'
$\pi_{\cali}(S'_{\mathbb{R}})$.

We wish to understand what happens with the information coded in $Z_h$ after
elimination certain variables. The next results, as a prototype, shows that
under certain reduction the `Seiberg--Witten information' survives:
if the set of nodes $\caln$ 
is non--empty then for  $\cali=\caln$ one has the following.
\begin{thm}\label{th:RedThm} \ \cite{LN}
The counting function
 of $Z_h(\bt_{\caln})$ in the cone $\pi_{\caln}(S'_{\mathbb{R}})$
admits  a quasipolynomial and a periodic constant,   and
$$\mathrm{pc}^{\pi_{\mathcal{N}}(S'_{\mathbb{R}})}(Z_h(\mathbf{t}_{\mathcal{N}}))=\mathrm{pc}^{S'_{\mathbb{R}}}(Z_h(\mathbf{t}))=
-\mathfrak{sw}_{-h*\sigma_{can}}(M)-\frac{(K+2r_h)^2+|\mathcal{V}|}{8}.$$
\end{thm}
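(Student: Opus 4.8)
The plan is to compare the two counting functions $Q^\calt_h$ on $L'_h$ and $Q^{\calt_\caln}_h$ (the counting function of $Z_h(\bt_\caln)$) on the projected lattice, and to show that the quasipolynomial governing the latter on $\pi_\caln(S'_\mathbb{R})$ has the same constant term as the one governing the former on $S'_\mathbb{R}$. The essential point is that passing from $Z_h(\bt)$ to $Z_h(\bt_\caln)$ amounts to summing the coefficients $z^\calt(l')$ over the fibres of the projection $\pi_\caln$; since $Z(\bt)$ is supported on the Lipman cone $\calS'$, which is strictly positive in every coordinate, each such fibre-sum is finite and well defined, and the reduced counting function can be written as $Q^\calt_h$ evaluated along preimages of $\pi_\caln$. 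So the heart of the matter is to identify, for $x_\caln$ deep inside $\pi_\caln(S'_\mathbb{R})$, a representative $x\in \calS'$ with $\pi_\caln(x)=x_\caln$ for which the two truncations $\{l'\ngeq x\}$ and $\{l'|_\caln \ngeq x_\caln\}$ differ only by a set whose $z^\calt$-weighted sum is already captured by the quadratic polynomial of Theorem \ref{th:JEMS}.

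First I would make precise the comparison of truncations. For a fixed $x\in -K+\mathrm{int}(\calS')$, split $\{l'\ngeq x\}$ according to whether $l'|_\caln \ngeq x_\caln$ or $l'|_\caln \geq x_\caln$; the first part contributes exactly $Q^{\calt_\caln}_h(x_\caln)$ after summing over fibres, and the second part is $\sum_{l'|_\caln\geq x_\caln,\ l'\ngeq x} z^\calt(l')$. I would then argue that, because the string-like legs hanging off the nodes contribute factors of $Z$ that are essentially geometric series with predictable growth, once the node-coordinates of $l'$ dominate $x_\caln$ the remaining freedom in the leg-coordinates is "eventually polynomial": the correction term is itself given, for $x$ in a shifted subcone, by a quasipolynomial in $x$. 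Combining this with Theorem \ref{th:JEMS}, which expresses $-Q^\calt_h(x)$ as a concrete quadratic polynomial plus the Seiberg--Witten term for $x\in -K+\mathrm{int}(\calS')$, shows that $Q^{\calt_\caln}_h(x_\caln)$ agrees with a quasipolynomial in $x_\caln$ on a shifted subcone of $\pi_\caln(S'_\mathbb{R})$. The uniqueness of the chamber structure of $\calS'_\mathbb{R}$ (guaranteed by Theorem \ref{th:JEMS}, as noted before Theorem \ref{th:NJEMSThm}), together with the analogous statement for the projected cone, lets one extend this to all of $\pi_\caln(S'_\mathbb{R})$ and read off the periodic constant as the value at $0$.

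The evaluation of that periodic constant is then a bookkeeping computation: the correction term must be arranged so that its own periodic constant vanishes — this is exactly the mechanism of the one-variable case recalled in \ref{ss:perConst}, where subtracting the "regular" part $S^{reg}$ kills the polynomial contribution — so that $\mathrm{pc}^{\pi_\caln(S'_\mathbb{R})}(Z_h(\bt_\caln))$ equals $\mathrm{pc}^{S'_\mathbb{R}}(Z_h(\bt))$, and the latter is given by \eqref{eq:SUMQP2}. I expect the main obstacle to be the control of the correction term $\sum_{l'|_\caln\geq x_\caln,\ l'\ngeq x} z^\calt(l')$: one must show it is genuinely quasipolynomial in the node-coordinates in a full subcone, and that its periodic constant is $0$, which requires understanding how the coefficients $z^\calt(l')$ behave along the directions transverse to the node-coordinates — i.e. along the strings $\calt\setminus\caln$. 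This is precisely where the structure of strings (their $Z$-series being rational with controllable denominators, the counting functions along them being eventually linear) must be exploited carefully; the rest of the argument is a reduction to Theorem \ref{th:JEMS} and the formal properties of periodic constants.
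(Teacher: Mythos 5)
Your skeleton (split the truncation $\{l'\ngeq x\}$ according to whether $l'|_{\caln}\ngeq x|_{\caln}$, then control the correction term $\sum_{l'|_{\caln}\geq x|_{\caln},\ l'\ngeq x}z^\calt(l')$) is indeed the right shape: in this paper the theorem is recovered as the case $\cali=\caln$ of Theorem \ref{th:MainResult}, and the quasipolynomial identity (\ref{eq:proof1}) identifies exactly your correction term with $\sum_i Q^{\calt_i}_{[j^*_i(l'_0)]}(j^*_i(l'_0))$, the counting functions of the string subgraphs. But the two steps you defer are precisely the entire content, and neither is supplied. First, the quasipolynomiality of the correction in the node variables is not a matter of ``geometric series with predictable growth'': the fibres of $\pi_{\caln}$ are unbounded and the coefficients $z^\calt(l')$ along them are not controlled in any naive way; in the paper this step costs Sections \ref{s:convMCF}--\ref{sec:MCF} (modified counting functions, the convexity Proposition \ref{prop:LSz} whose proof goes through fibered multilinks and A'Campo's formula for the monodromy zeta function, the surgery Lemma \ref{lem:EL}, and an induction), and in \cite{LN} it costs the Ehrhart/vector-partition machinery.

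Second, and more seriously, your claim that the correction term's periodic constant vanishes ``by the mechanism of the one-variable case'' is not a valid argument. Once the correction is identified with $\sum_i Q^{\calt_i}_{[j^*_i(r_h+l)]}(j^*_i(r_h+l))$, Theorem \ref{th:JEMS} applied to each $\calt_i$ shows that its contribution to the constant term is $-\sum_i\bigl(\mathfrak{sw}_{-[j^*_i(r_h)]*\sigma_{can,i}}(M_i)+((K(\calt_i)+2j^*_i(r_h))^2+|\calv(\calt_i)|)/8\bigr)$, i.e.\ a sum of normalized Seiberg--Witten invariants of lens spaces at the spin$^c$-structures induced by $j^*_i(r_h)$. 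There is no formal reason for this to vanish: $j^*_i(r_h)\neq r_{h_i}$ in general, and for an arbitrary $\cali$ the analogous constant is nonzero (Example \ref{ex:graph2}, where (\ref{eq:100}) gives $-1$). For $\cali=\caln$ the needed vanishing is exactly (\ref{eq:vanishing}), which the authors stress is not evident for $h\neq 0$ and prove in Section \ref{s:ExProof} by analytic means: $Q^{\calt_i}_{[j^*_i(r_h)]}(j^*_i(r_h))=0$ together with the SWIC for cyclic quotient singularities and the vanishing $h^1(\widetilde X(\calt_i),\calO(-j^*_i(r_h)))=0$, the latter via the universal abelian cover since $j^*_i(r_h)$ need not lie in $\calS'(\calt_i)$ (Example \ref{ex:graph1}), so Lipman-type vanishing does not apply directly. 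Until you prove both the quasipolynomiality of the correction and this vanishing, the proposal is a plan rather than a proof.
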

This result has the following  advantages:
the number of reduced variables (i.e. number of nodes) usually is
considerably less than the number of vertices, a fact which reduces the
complexity of the calculations.
Moreover, the reduced series reflects more conceptually
the complexity of the manifold $M$ (using  only one variable for each Seifert 3-manifold piece in its JSJ--decomposition).  Furthermore,
the reduced series can be compared/linked with other (geometrically
or analytically defined)
 objects as well (see e.g. \cite{BN,NPS}).

We generalize this result in two directions: first, we replace $\caln$ by an arbitrary subset $\cali\not=\emptyset$, and second, we lift the identity from the numerical
periodic constant level to the quasipolynomial level.

 \subsection{\bf A surgery formula associated with the elimination of a vertex.}
 \label{ss:surgfor}
 Surgery formulae for a certain 3--manifold invariant, in general,
 compare the invariant of $M$  with the invariants of different surgery modifications of $M$.
 In the case of plumbed 3--manifolds, one compares the invariants associated with 3--manifolds obtained by different
 modifications of the graph. The `standard' topological surgery formulae for the Seiberg--Witten invariant (induced by exact triangles of certain cohomology theories,  cf. \cite{OSz,Greene,Nexseq})
 compare the invariants of three such 3--manifolds.
 Furthermore, in these approaches, one cannot separate
a  certain fixed $spin^c$ structure, the theory mixes always several of them.
(See also \cite{Turaev}.)
  The next formula (and our
 generalizations as well) are different: they
 compare the Seiberg--Witten invariant of two 3--manifolds via
 an `algebraic' term defined as the  periodic constant of a series
 (and they split according to the $spin^c$--structures).

Let us fix  $\cali\subset \calv$. The set of vertices $\calv\setminus \cali$
determines the  connected full subgraphs $\{\calt_i\}_i$, $\cup_i\calt_i=\calt\setminus \cali$.
For each $i$ we consider the
inclusion operator $j_i:L(\calt_i)\to L(\calt)$,
$E_v(\calt_i)\mapsto E_v(\calt)$, identifying naturally
the corresponding $E$--base elements in the two graphs. This
preserves the intersection forms. Let $j_{i}^*:
L'(\calt)\to L'(\calt_i)$ be its dual, defined by
$j_{i}^*(E^*_{v}(\calt))=E^*_{v}(\calt_i)$ if $v\in\calv(\calt_i)$, and
$j_{i}^*(E^*_{v}(\calt))=0$ otherwise.
Then $(j^*_{i}(l'), l)_{\calt_i}=(l',j_{i}(l))_{\calt}$
for any $l'\in L'(\calt)$ and $l\in L(\calt_i)$.

Let us start with an arbitrary $spin^c$--structure
$\widetilde{\sigma}$ on $\widetilde{X}$. Since ${\rm Spin}^c(\widetilde{X})$
is an $L'$--torsor, there is a unique $l'\in L'$ such that
$\widetilde{\sigma}=l'*\widetilde{\sigma}_{can}$. Its restriction to
${\rm Spin}^c(M)$ is $\sigma=[l']*\sigma_{can}$. We also refer to $\widetilde{\sigma}$ as the extension of $\sigma$. 
Since $\widetilde {X}(\calt_i)$ can be regarded as a small tubular neighbourhood
of those $E_v$ which are contained in $\calv(\calt_i)$, $\widetilde{\sigma}$ has restrictions $\widetilde{\sigma}_i$ to each $\widetilde{X}(\calt_i)$ too.
Since the canonical $spin^c$--structure of $\widetilde X$
 restricts to the canonical $spin^c$--structure $\widetilde\sigma_{can,i}$
of $\widetilde{X}(\calt_i)$, $\widetilde\sigma=l'*\widetilde{\sigma}_{can}$
 restricts to $\widetilde\sigma_i:=j_{i}^*(l')*\widetilde\sigma_{can,i}
 \in {\rm Spin}^c(\widetilde{X}(\calt_i))$,
 whose restriction to the boundary $M_i=M(\calt_i)=\partial \widetilde{X}_i$ is
$\sigma_i=[j_{i}^*(l')]*\sigma_{can,i}$.

 Having these general definitions,  let us consider first the particular case of
 $\cali=\{v\}$ ($v\in\calv$), and,  as above,
  let $\{\calt_i\}_i$ be the connected components of $\calt\setminus \{v\}$.
 The following surgery formula was one of the motivations of our main result.
 Below the reduced $\bt_\cali$ has  only one variable, namely $t_v$,
 and the corresponding periodic constant is computed by the `easy'
 definition of the one--variable series.

 \begin{thm}\label{th:BNsurg} \ \cite{BN} Fix any $h\in H$
 and extend $h*\sigma_{can}\in {\rm Spin}^c(M)$ as $\widetilde{\sigma}:=
 r_h*\widetilde{\sigma}_{can}\in {\rm Spin}^c(\widetilde{X})$.
 Consider also $\cali=\{v\}\subset  \calv$ and the corresponding restrictions of $\widetilde{\sigma}$ to $\cup_i M(\calt_i)$. Then
 the series $Z_h(t_\cali)=Z_h(t_v) $ admits a periodic constant, and
 \begin{equation*}
\begin{split}
\mathfrak{sw}_{-h*\sigma_{can}}(M)+\frac{(K+2r_h)^2+|\mathcal{V}|}{8}
 =\ &
 \sum_{i}\,  \Big( \mathfrak{sw}_{-[j^*_{i}(r_h)]*\sigma_{can,i}}(M_i)+\frac{(K(\mathcal{T}_i)
  + 2j^*_i(r_h))^2+
 |\mathcal{V}(\calt_i))|}{8}\Big)\\
 &- \textnormal{pc}(Z_{h}(t_{v})).
 \end{split}
\end{equation*}
\end{thm}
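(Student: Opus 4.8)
The plan is to reduce Theorem \ref{th:BNsurg} to Theorem \ref{th:NJEMSThm} by comparing the quasipolynomials attached to $Z_h(\mathbf t)$ on $\calt$ and to the $Z_{h_i}(\mathbf t)$ on the pieces $\calt_i$, using the reduced series $Z_h(t_v)$ as the bridge. First I would fix $h$ and, as in the statement, work with $\widetilde\sigma = r_h*\widetilde\sigma_{can}$ and its restrictions $\widetilde\sigma_i$. By Theorem \ref{th:NJEMSThm}, each of the three groups of terms in the asserted identity --- the left side, the summands over $i$, and $\mathrm{pc}(Z_h(t_v))$ --- should be interpreted as a value at $l=0$ of a quasipolynomial: the left side is $-\mathrm{pc}^{S'_{\mathbb R}}(Z_h(\mathbf t)) = -\mathfrak Q_h(0)$, the $i$--th bracket is $-\mathrm{pc}^{S'_i}(Z_{h_i}(\mathbf t))$ for the appropriate class $h_i=[j_i^*(r_h)]$ on $\calt_i$, and the last term is $\mathrm{pc}(Z_h(t_v))$ for the one--variable reduced series. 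So the whole statement is equivalent to an additivity of periodic constants: $\mathrm{pc}^{S'_{\mathbb R}}(Z_h(\mathbf t)) = \sum_i \mathrm{pc}^{S'_i}(Z_{h_i}(\mathbf t)) + \mathrm{pc}(Z_h(t_v))$.

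Next I would establish the combinatorial backbone: a factorization of the rational function $f(\mathbf t)$ when the variable $t_v$ is isolated. Because $\calt\setminus\{v\}=\sqcup_i\calt_i$ and the exponent $\delta_v-2$ in (\ref{eq:1.1}) drops by exactly the number of edges at $v$ when we pass to the pieces, one obtains a clean identity relating $f^{\calt}(\mathbf t)$, the product $\prod_i f^{\calt_i}$, and a one--variable correction factor supported at $t_v$. Concretely, I would write $E^*_v(\calt)$ in terms of the $E^*_\bullet(\calt_i)$ plus a multiple of $E_v$, substitute $t_u=1$ for $u\ne v$ on the pieces, and check that the Taylor coefficients match term by term. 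This step is where one must be careful about the lattice maps $j_i$, $j_i^*$ and the fact that $\pi_{\cali}$ does \emph{not} preserve the intersection form: the quadratic terms $(K+2l')^2$ on $\calt$ versus $(K(\calt_i)+2l'_i)^2$ on the pieces differ by a controlled cross--term, and reconciling these quadratic forms is what produces the Casson--Walker/Euler characteristic bookkeeping implicit in the normalization.

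Then I would compute the counting functions. On $\calt$, restrict attention to $x$ in a shifted copy of $\mathrm{int}(S')$ so Theorem \ref{th:JEMS} applies and $-Q^{\calt}_{[l']}(l')$ is the quadratic polynomial of (\ref{eq:SUM}). On each $\calt_i$ the same holds with its own canonical cycle $K(\calt_i)$ and class $h_i$. The key is to express $Q^{\calt}_h(x)$, for $x$ of the form $l+r_h$ with the $\calt_i$--coordinates pushed far into their Lipman cones and the $t_v$--coordinate running, as $\sum_i Q^{\calt_i}_{h_i}(\cdot) $ plus the one--variable counting function of $Z_h(t_v)$; this is a direct consequence of the factorization above together with the observation that summations are finite on $S'$. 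Taking the value at the appropriate base point and invoking the definition (\ref{eq:PCDEF}) of the periodic constant gives the additivity, hence the theorem. The main obstacle I anticipate is precisely the intersection--form discrepancy under $\pi_{\cali}$: making the quadratic polynomials on $\calt$ and on the $\calt_i$ match requires identifying exactly which cross--terms get absorbed into $\mathrm{pc}(Z_h(t_v))$ versus into the $(K+2r_h)^2+|\calv|$ normalizations, and verifying that the leftover one--variable series genuinely admits a periodic constant (i.e.\ that its counting function is eventually polynomial), which ultimately rests on the fact that $\calt\setminus\{v\}$ is again negative definite so Theorem \ref{th:JEMS} is available on each piece.
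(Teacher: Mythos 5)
There is a genuine gap, and it occurs at both ends of your argument. First, your reformulation of the statement is not correct: you claim the $i$--th bracket equals $-\mathrm{pc}^{\calS'_{i,\R}}(Z_{h_i})$ on $\calt_i$, so that the theorem becomes an additivity of periodic constants. But the bracket is the quasipolynomial of $\calt_i$ evaluated at $j^*_i(r_h)$, not at the reduced representative $r_{h_i}$, and usually $j^*_i(r_h)\not=r_{h_i}$ (the statement itself warns of this, and Remark \ref{rem:harom}(1) stresses that the $\mathrm{pc}$--operation does not commute with $j^*_i$). Example \ref{ex:graph2} shows the discrepancy is real: there the bracket equals $-1$ (see (\ref{eq:100})) while the periodic constant of the rational piece is $0$ by (\ref{eq:SWICrat}). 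So the identity you set out to prove is not the identity of Theorem \ref{th:BNsurg}; the correct target keeps the pieces' quasipolynomials evaluated at the shifted points $j^*_i(r_h)$, which is exactly why the paper works at the level of quasipolynomials (Theorem \ref{prop:1} and Corollary \ref{cor:QP}) rather than of periodic constants of the $\calt_i$.

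Second, the combinatorial backbone you invoke — a ``clean identity'' relating $f^{\calt}(\bt)$, $\prod_i f^{\calt_i}$ and a one--variable correction in $t_v$, from which the decomposition of counting functions would follow ``directly'' — does not exist in this naive form and is precisely the hard core of the proof. Passing from $\calt$ to the $\calt_i$ changes the exponents $\delta_u-2$ at the vertices adjacent to $v$, and the dual cycles $E^*_u(\calt)$, $E^*_u(\calt_i)$ live in lattices with different intersection forms, so no termwise matching of Taylor coefficients survives the truncations $l'\not\geq x$ defining $Q^{\calt}_h$. In the paper this step is the identity (\ref{eq:proof11}) (the $|\cali|=1$ case of Theorem \ref{prop:1}), and its proof requires the inclusion--exclusion passage to the modified counting functions $q^{\calt}_{h,\calj}$ of \ref{ss:4.4}, the convexity Proposition \ref{prop:LSz} (whose proof uses multiplicity systems, fibered multilinks, A'Campo's zeta function and the degree bound of Lemma \ref{lem:zetadeg}, plus Lemma \ref{lem:modform}), and the surgery Lemma \ref{lem:EL}, proved by reducing to an end--vertex via blow--ups and establishing the nontrivial cancellation (\ref{eq:proof9}). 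None of these cancellations is a formal consequence of a factorization of the rational function, so your plan as written skips the essential argument; moreover, the eventual quasipolynomiality of the reduced one--variable counting function is a consequence of that identity, not an input you may assume.
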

\noindent
(Note that usually $j^*_i(r_h)\not= r_{[j^*_i(r_h)]}$, see below.)
This will be generalized to arbitrary $\cali\not=\emptyset $ and to an arbitrary extension
$\widetilde{\sigma}:= l'*\widetilde{\sigma}_{can}$  ($l'\in L'$) of $h*\sigma_{can}$.

 \section{The main result: the new surgery formulae}\label{s:3}

 \subsection{}
First we  state a consequence of our Main Theorem \ref{prop:1},
which is still sufficient general to generalize all the previous results.

 We will use the notations of the previous section. Let
 $\cali\subset \calv$ be an arbitrary non--empty subset and write
 $\calt\setminus \cali$ as the union of full
  connected subgraphs  $\cup_i\calt_i$. Moreover, we fix $h\in H$ as well.
 \begin{thm}\label{th:MainResult}
 The series $Z_{h}(\mathbf{t}_\cali)$ admits a periodic constant in the
 real cone $\pi_\cali(\calS'_{\R})$, and
\begin{equation*}
\begin{split}
\mathfrak{sw}_{-h*\sigma_{can}}(M)+\frac{(K+2r_h)^2+|\mathcal{V}|}{8}
 =\ &
 \sum_{i} \Big( \mathfrak{sw}_{-[j^*_{i}(r_h)]*\sigma_{can,i}}(M_i)+\frac{(K(\mathcal{T}_i) + 2j^*_i(r_h))^2+
 |\mathcal{V}(\calt_i)|}{8}\Big)\\
 &- \textnormal{pc}^{\pi_\cali(\calS'_{\R})}(Z_{h}(\mathbf{t}_\cali)).
 \end{split}
\end{equation*}
 \end{thm}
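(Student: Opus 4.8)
\smallskip
\noindent\emph{Strategy of the proof.}
The plan is to reformulate both sides of the formula via periodic constants/quasipolynomials, and then to prove the resulting identity of counting functions by an inclusion--exclusion over the connected components of $\calt\setminus\cali$. By Theorem~\ref{th:NJEMSThm} the left--hand side equals $-\,\textnormal{pc}^{\calS'_{\R}}(Z_h(\bt))=-\mathfrak{Q}_h(0)$, where $\mathfrak{Q}_h$ is the quasipolynomial of $Z_h(\bt)$ on $\calS'_{\R}$. For a component $\calt_i$ of $\calt\setminus\cali$ set $h_i:=[j^*_i(r_h)]$; then $j^*_i(r_h)-r_{h_i}\in L(\calt_i)$, and Theorem~\ref{th:JEMS} in its quasipolynomial form \eqref{eq:SUMQP}, applied to $\calt_i$, identifies the $i$--th summand on the right with $-\mathfrak{Q}^{\calt_i}_{h_i}\!\big(j^*_i(r_h)-r_{h_i}\big)$. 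Hence the assertion (and its sharper, pointwise form, Theorem~\ref{prop:1}) reduces to showing that $Z_h(\bt_\cali)$ admits a quasipolynomial $\mathfrak{Q}^{\bt_\cali}_h$ on $\pi_\cali(\calS'_{\R})$ with
\[
\mathfrak{Q}^{\bt_\cali}_h(0)\;=\;\mathfrak{Q}_h(0)\;-\;\sum_i\mathfrak{Q}^{\calt_i}_{h_i}\!\big(j^*_i(r_h)-r_{h_i}\big).
\]

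\smallskip
For the combinatorial core, fix $x_0$ deep in $\pi_\cali(\calS'_{\R})$, choose a lift $l'_0\in\calS'_{\R}$ with $[l'_0]=h$ and $\pi_\cali(l'_0)=x_0$ (depending piecewise linearly on $x_0$), and write $\calv=\cali\sqcup\bigsqcup_i\calv(\calt_i)$. Testing $l'\not\geq l'_0$ coordinatewise, the event ``$l'\not\geq l'_0$'' is the disjunction of ``$\pi_\cali(l')\not\geq x_0$'' and, on the complementary locus, ``$\pi_{\calv(\calt_i)}(l')\not\geq\pi_{\calv(\calt_i)}(l'_0)$ for some $i$''. Inclusion--exclusion over the components then writes
\[
Q^\calt_h(l'_0)\;=\;\sum_{A\subseteq\{i\}}(-1)^{|A|}\,Q^{(A)}(l'_0),
\]
where the \emph{modified counting function} $Q^{(A)}$ keeps the truncation only along the $\cali$--coordinates and along the components in $A$. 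By construction $Q^{(\emptyset)}(l'_0)$ is exactly the counting function $\bar Q_h(x_0)$ of the reduced series $Z_h(\bt_\cali)$ in the sense of \eqref{eq:countintro}, so $\bar Q_h(x_0)=Q^\calt_h(l'_0)-\sum_{\emptyset\neq A}(-1)^{|A|+1}Q^{(A)}(l'_0)$.

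\smallskip
Two things then remain, carried out in Sections~\ref{s:convMCF}--\ref{s:SurgMCF}--\ref{sec:MCF}. First (the ``convexity property''), one shows that each $Q^{(A)}$ is piecewise quasipolynomial on suitable full--dimensional cones; this both legitimizes the decomposition and the limiting arguments and yields the existence of $\mathfrak{Q}^{\bt_\cali}_h$ and of the periodic constant on $\pi_\cali(\calS'_{\R})$. Second, one proves a surgery formula for the $Q^{(A)}$: since $\widetilde{X}(\calt_i)$ is a regular neighbourhood of $\{E_v\}_{v\in\calv(\calt_i)}$, the factors of $f(\bt)=\prod_{v\in\calv}(1-\bt^{E^*_v})^{\delta_v-2}$ indexed by $\calv(\calt_i)$ reassemble --- after discarding the $\cali$--variables and correcting the valencies (from $\delta^{\calt}_v$ to $\delta^{\calt_i}_v$) of the vertices of $\calt_i$ adjacent to $\cali$ --- into the Poincar\'e series of $\calt_i$, the passage of lattices being recorded by $j^*_i$ (which, unlike $\pi_\cali$, preserves the intersection forms). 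Feeding this into the alternating sum, the contributions with $|A|\geq 2$ combine so that only one correction per component survives, leaving the identity $\bar Q_h(x_0)=Q^\calt_h(l'_0)-\sum_i Q^{\calt_i}_{[j^*_i(l'_0)]}\!\big(j^*_i(l'_0)\big)$ for $x_0$ deep. As this holds for every lift $l'_0$ of $x_0$, it is an identity of quasipolynomials in $x_0$; computing its value at the origin with the quadratic formulas of Theorem~\ref{th:JEMS} for $\calt$ and for each $\calt_i$ (using the lift $r_h$, where $[j^*_i(r_h)]=h_i$) gives precisely the statement of Theorem~\ref{th:MainResult}. The cases $\cali=\{v\}$ and $\cali=\caln$ recover Theorems~\ref{th:BNsurg} and \ref{th:RedThm} respectively.

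\smallskip
The step I expect to be the main obstacle is the multivariable convexity/quasipolynomiality of the modified counting functions in Section~\ref{s:convMCF}: unlike the one--variable situation of Theorem~\ref{th:BNsurg}, one must control an alternating family of several--fold partial counting functions over a chamber decomposition of $\calS'_{\R}$ and prove that the relevant limits exist and remain quasipolynomial --- exactly where the general Ehrhart--type machinery of \cite{LN} is needed. A second, more bookkeeping, subtlety is that $\pi_\cali$ does not preserve the intersection forms and that $j^*_i(r_h)\neq r_{h_i}$ in general, so throughout one must track the canonical cycles $K(\calt_i)$ and the quadratic correction terms $\tfrac18\big((K(\calt_i)+2j^*_i(r_h))^2+|\calv(\calt_i)|\big)$ carefully under the restriction maps $j^*_i$.
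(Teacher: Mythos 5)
Your overall architecture is the same as the paper's: reduce Theorem \ref{th:MainResult} to the counting--function identity $Q^\calt_{[l'_0]}(l'_0)=Q^\calt_{[l'_0],\cali}(l'_0)+\sum_i Q^{\calt_i}_{[j^*_i(l'_0)]}(j^*_i(l'_0))$ for $l'_0$ deep in the Lipman cone (this is exactly Theorem \ref{prop:1}), deduce quasipolynomiality of the reduced counting function from Theorem \ref{th:JEMS} applied to $\calt$ and to each $\calt_i$ (Corollaries \ref{cor:QP}--\ref{cor:QP2}), and evaluate at the origin. Your bookkeeping is right where it matters: the $i$--th summand is the quasipolynomial of $\calt_i$ evaluated at $j^*_i(r_h)-r_{h_i}$, not a periodic constant of $Z_{h_i}(\calt_i)$ (cf.\ Remark \ref{rem:harom}), and the identification of the periodic constant computed in $L$ with the one computed in $L|_\cali$ (the paper's (\ref{eq:PcPc}), using that $r_h|_\cali$ already lies in the $\cali$--semi--open cube) is at least implicit in your final step.

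However, the combinatorial core --- which is the actual content of the theorem --- is asserted rather than proved, and the assertions as written do not hold. First, your inclusion--exclusion over subsets $A$ of the set of components, with signs $(-1)^{|A|}$ and ``truncation along $\cali$ and along the components in $A$'', does not give a correct identity for any natural reading of $Q^{(A)}$ (already with one component, the count of $\{l'|_\cali\not\geq\}\cup\{l'|_{\calv(\calt_1)}\not\geq\}$ is not $Q^{(\emptyset)}-Q^{(\{1\})}$); the decomposition that works is over nonempty subsets $\calj\subset\cali$ of \emph{vertices}, with the strict conditions $l'|_\calj\prec l'_0|_\calj$ defining the modified counting functions $q^\calt_{h,\calj}$. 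Second, the claim that the factors of $f(\bt)$ indexed by $\calv(\calt_i)$ ``reassemble into the Poincar\'e series of $\calt_i$'' after discarding the $\cali$--variables and correcting valencies is false at the level of series: the reduced $Z^\calt$ is not $Z^{\calt_i}$, and the passage from truncated sums of $z^\calt$ to counting functions of $Z^{\calt_i}$ is precisely the surgery Lemma \ref{lem:EL}, whose proof requires the change of variables $x_u=\bt^{E^*_u}$, an explicit bijection $\Phi$, the vanishing of the residual sum (\ref{eq:proof9}), blow--up invariance and an induction. Third, the cancellation you invoke (``contributions with $|A|\geq 2$ combine so that only one correction per component survives'') is exactly the convexity Proposition \ref{prop:LSz}, $q^\calt_{h,\calj}=q^\calt_{h,\overline{\calj}}$, proved via multiplicity systems, fibered multilinks, A'Campo's zeta function and the negative definiteness of the modified intersection form; nothing in your sketch indicates how to obtain it. Finally, the obstacle you single out --- Ehrhart--type chamber control of the modified counting functions --- is not where the difficulty lies: single--chamber quasipolynomiality of the reduced series is \emph{deduced} from the identity (\ref{eq:proof1}), whose other terms are quasipolynomial by Theorem \ref{th:JEMS}; the genuinely hard steps are the two combinatorial statements above, which your proposal leaves unproven.
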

\begin{example}\label{ex:partcases} Let us consider the following particular cases.

(1) \ Assume that $\cali=\calv$. Then each  $\calt_i$ is empty, and we recover Theorem \ref{th:NJEMSThm} proved in  \cite{NJEMS}.

(2) \ Assume that $\cali$ consists of one vertex. Then we recover Theorem
\ref{th:BNsurg} proved in  \cite{BN}.

(3) \ Assume that $\cali=\caln$. Then we recover Theorem \ref{th:RedThm},
proved in \cite{LN}, once we verify for each $i$ the vanishing
\begin{equation}\label{eq:vanishing}
 \mathfrak{sw}_{-[j^*_{i}(r_h)]*\sigma_{can,i}}(M_i)+\frac{(K(\mathcal{T}_i) + 2j^*_i(r_h))^2+
 |\mathcal{V}(\calt_i)|}{8}=0.
\end{equation}
This vanishing is well--known for $h=0$ (see e.g.  \ref{ss:8.2} or
Remark \ref{rem:lc}), but it is not
evident at all (at least for the authors) for arbitrary $h$.
It will be proved in Section \ref{s:ExProof} by analytic methods.

In this case the graph $\calt_i$ is a string, hence $M(\calt_i)$ is a lens space.
The difficulty in the vanishing (\ref{eq:vanishing}) is that $r_h$ is a global object induced from $\calt$, and for any fixed $\calt_i$ is not clear at all what classes of $L'(\calt_i)$ might appear as $j^*_i(r_h)$ for certain `extension graphs' $\calt$. (If $h=0$ then $j^*_i(r_h)=0$ as well, which simplifies the
situation: (\ref{eq:vanishing}) is a statement regarding merely a lens space, and
 it  follows e.g. from \eqref{eq:SWICrat}.)

(4) For generalization of (3) for the case when
 all subgraphs $\calt_i$ are {\it rational}, see Section \ref{s:8}.
\end{example}

\subsection{} The above formula from Theorem
\ref{th:MainResult}, which  targets numerical invariants, is a  consequence of a  general `{\it surgery identity of quasipolynomials}'.
This is the subject of the Main Theorem \ref{prop:1}.

Similarly to the counting functions defined in (\ref{eq:countintro}), we
 set for any $h$ and $\cali\subset\calv$, $\cali\not=\emptyset$,
 \begin{equation}\label{eq:countintro2}
Q^\calt_{h,\cali}: L'_{h}\to \bZ, \ \ \ \ Q^\calt_{h,\cali}(x):=\sum_{l'|_\cali\ngeq x |_\cali,
\, [l']=[x]} z^\calt(l').
\end{equation}
Note that $Q^\calt_{h,\cali}$ depends only on the reduced series
$Z_h(\bt_\cali)$: it is its counting function.

The setup of the next statement is the following.
We fix $h\in H$, and we choose
 $l'_0=\sum_{v\in\calv} a_vE^*_v\in L'$ with
 $[l'_0]=h$. We also fix
$\cali\subset \calv$, $\cali\not=\emptyset$,  and $\calt\setminus \cali=\cup_i\calt_i$.

\begin{thm}\label{prop:1} For any  $l'_0$ with all $a_v$ sufficiently large
 one has the identity
\begin{equation}\label{eq:proof1}
Q^\calt_{[l'_0]}\,(l'_0)=Q^\calt_{[l'_0],\cali}\,(l'_0)+\sum_i \
Q^{\calt_i}_{[j^*_i(l'_0)]}(j^*_i(l'_0)).
\end{equation}
\end{thm}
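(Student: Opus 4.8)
The idea is to decompose the coefficient sum $Q^\calt_{[l'_0]}(l'_0)=\sum_{l'\ngeq l'_0,\,[l']=h}z^\calt(l')$ according to \emph{how} the inequality $l'\ngeq l'_0$ fails: either it already fails on the coordinates indexed by $\cali$, i.e. $l'|_\cali\ngeq l'_0|_\cali$, or it holds on $\cali$ (so $l'|_\cali\geq l'_0|_\cali$) but fails on the complementary coordinates $\calv\setminus\cali$. The first case contributes exactly $Q^\calt_{[l'_0],\cali}(l'_0)$ by definition \eqref{eq:countintro2}. So the whole statement reduces to identifying the second contribution,
\[
\sum_{\substack{l'|_\cali\geq l'_0|_\cali,\ l'|_{\calv\setminus\cali}\ngeq l'_0|_{\calv\setminus\cali}\\ [l']=h}} z^\calt(l'),
\]
with $\sum_i Q^{\calt_i}_{[j^*_i(l'_0)]}(j^*_i(l'_0))$. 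The key observation making this possible is the product structure of the generating function $f(\mathbf t)=\prod_v(1-\mathbf t^{E^*_v})^{\delta_v-2}$: under the specialization $t_v\to 1$ for $v\in\cali$ (or, more precisely, when we track only the $\calv\setminus\cali$--exponents), the exponents $E^*_v$ of $\calt$ restrict via $j^*_i$ to the dual classes $E^*_v(\calt_i)$ of the subgraphs, because for $v$ in the interior of a component $\calt_i$ the valency is unchanged, while the boundary corrections are precisely absorbed by the way $j^*_i$ kills the $\cali$--directions. This is where one must be careful: the valencies $\delta_v$ in $\calt$ and in $\calt_i$ differ exactly at vertices of $\calt_i$ adjacent to $\cali$, and one needs the hypothesis that all $a_v$ are large so that the "error terms" coming from these boundary vertices (and from the factors $(1-\mathbf t^{E^*_v})^{\delta_v-2}$ with $v\in\cali$) do not reach the truncation region.

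Concretely, I would proceed as follows. First, fix $l'_0$ with all $a_v$ large and write the target sum as above. Second, for each $l'$ in the second region, consider its restriction $l'|_{\calv\setminus\cali}$; since $\calv\setminus\cali$ is a disjoint union of the $\calt_i$, this restriction splits as a tuple $(l'|_{\calv(\calt_i)})_i$, and the condition $l'|_{\calv\setminus\cali}\ngeq l'_0|_{\calv\setminus\cali}$ means $l'|_{\calv(\calt_i)}\ngeq l'_0|_{\calv(\calt_i)}$ for \emph{at least one} $i$. Third — and this is the combinatorial heart — I claim that for $a_v$ large, in this region the constraint "$l'|_\cali\geq l'_0|_\cali$ together with $[l']=h$" becomes vacuous in the following sense: summing $z^\calt(l')$ over all $l'$ with prescribed $\calv\setminus\cali$--part (and $[l']=h$) and with $l'|_\cali$ unconstrained gives, by the product formula and the fact that $\sum_{l'|_\cali}z^\calt(\cdot)$ telescopes the $\cali$--factors of $f$ to their value at $t_v=1$, exactly the coefficient $z^{\calt\setminus\cali}$ of the corresponding exponent; and the largeness of $a_v$ guarantees that imposing $l'|_\cali\geq l'_0|_\cali$ does not remove any of these terms, because the support of $Z$ that can reach a fixed bounded $\calv\setminus\cali$--exponent has bounded $\cali$--exponent. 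Fourth, one must check that $z^{\calt\setminus\cali}$ of a tuple $(m_i)_i$ factors as $\prod_i z^{\calt_i}(m_i)$ — immediate, since $f$ for the disconnected graph $\calt\setminus\cali$ is the product of the $f_{\calt_i}$. Fifth, reassemble: an inclusion–exclusion over which components $i$ witness the failure of the inequality collapses — because $z^{\calt_i}$ is supported in the Lipman cone $\calS'(\calt_i)$, where a fixed vertex value bounds the whole vector, exactly as in the one–vertex case of \cite{BN} — into the clean alternating-free sum $\sum_i Q^{\calt_i}_{[j^*_i(l'_0)]}(j^*_i(l'_0))$, using that $j^*_i(l'_0)|_{\calv(\calt_i)}=l'_0|_{\calv(\calt_i)}$ as an integer vector (the class $[j^*_i(l'_0)]$ being the relevant bookkeeping in $L'(\calt_i)/L(\calt_i)$).

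\textbf{Main obstacle.} The delicate point is making "$a_v$ sufficiently large" do exactly the right work in two places at once: (a) ensuring that the specialization $t_v\to1$ for $v\in\cali$, when restricted to the truncation region $\{l'|_{\calv\setminus\cali}\ngeq l'_0|_{\calv\setminus\cali}\}$, loses no coefficients — i.e. that $\{l'\in\calS'(\calt): l'|_{\calv\setminus\cali} \text{ bounded}\}$ has bounded $\cali$–projection, which follows from negative definiteness and the strict positivity of the entries of the $E^*_v$, but needs to be quantified against $l'_0$; and (b) controlling the interaction between the "$\geq$ on $\cali$" half of the condition and the "$\ngeq$ on $\calv\setminus\cali$" half, so that the sum genuinely factors component-by-component with no leftover boundary contributions — this is where I expect to need a careful estimate on how far into the $\calt_i$ a coefficient of $f$ supported near a boundary vertex of $\calt_i$ can propagate, and to invoke that $l'_0$ lies deep enough in $-K+\mathrm{int}(\calS')$-type region (cf. Theorem \ref{th:JEMS}) that the quasipolynomial behaviour of all the pieces has already kicked in. I would expect the clean statement to follow once one proves a lemma of the form: for $a_v\gg0$, $z^\calt$ restricted to $\{l': l'|_\cali \text{ large}, \, l'|_{\calv\setminus\cali} \text{ in the relevant box}\}$ agrees with the product $\prod_i z^{\calt_i}$ of the component series; everything else is bookkeeping with $j^*_i$ and the Lipman-cone support property.
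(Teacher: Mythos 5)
Your reduction to identifying the ``second region'' sum (those $l'$ with $l'|_{\cali}\geq l'_0|_{\cali}$ but $l'\ngeq l'_0$) with $\sum_i Q^{\calt_i}_{[j^*_i(l'_0)]}(j^*_i(l'_0))$ is a correct and essentially trivial first step; the problem is that the two claims you then lean on are false, and they carry the entire weight of the argument. First, specializing $t_v\to 1$ for $v\in\cali$ (equivalently, summing out the $\cali$--exponents) does \emph{not} turn $z^\calt$ into $z^{\calt\setminus\cali}=\prod_i z^{\calt_i}$: the reduced series $Z_h(\bt_{\calv\setminus\cali})$ has exponents $\pi_{\calv\setminus\cali}(E^*_v(\calt))$, computed with the intersection form of the big graph $\calt$, while the series of the components has exponents $E^*_v(\calt_i)$ computed with the form of $\calt_i$; these disagree (already in the interior of a component), the valencies disagree at boundary vertices, and the vertices of $\cali$ contribute factors with nonzero projected exponents. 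There is no coefficient--wise identity of the kind you hope for in your closing lemma; the agreement holds only for certain truncated sums, and proving that agreement \emph{is} the theorem. Second, the bookkeeping identity ``$j^*_i(l'_0)|_{\calv(\calt_i)}=l'_0|_{\calv(\calt_i)}$ as an integer vector'' is wrong: in $E$--coordinates $j^*_i(l'_0)$ equals the naive restriction \emph{minus} boundary corrections of the form $\sum_{n\in\partial\calv_i}(\mbox{$E_n$--coefficient of }l'_0)\,E^*_{w(n)}(\calt_i)$ (see the formula in Section \ref{s:PVan} and Example \ref{ex:graph1}, where $j^*_i(r_h)=-E_0/2$ although the naive restriction is $0$); since all $a_v$ are large these corrections are large, so the truncation region $\{\tilde l'\ngeq j^*_i(l'_0)\}$ in $L'(\calt_i)$ is not the image of $\{l'|_{\calv(\calt_i)}\ngeq l'_0|_{\calv(\calt_i)}\}$. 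Your final collapse of the inclusion--exclusion over components (no cross terms) is likewise only asserted.

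The paper's proof is built precisely to get around these two obstructions, and its shape indicates what is genuinely needed: one rewrites both sides via the modified counting functions $q^\calt_{h,\calj}$ of \ref{ss:4.4}, proves the ``convexity'' identity $q^\calt_{h,\calj}=q^\calt_{h,\overline{\calj}}$ (Proposition \ref{prop:LSz}, whose proof needs multiplicity systems, fibered multilinks and A'Campo's zeta--function degree bound, Lemma \ref{lem:zetadeg}), and proves a one--vertex surgery identity for the $q$'s (Lemma \ref{lem:EL}) by an explicit support bijection with controlled error sets that vanish, plus a blow--up reduction and a double induction (on $|\cali|$ and on the size of the other components). The mechanism is cancellation of sums of coefficients over carefully chosen subsets of the support, not equality of individual coefficients; your proposal, as it stands, has no substitute for this and therefore has a genuine gap at its core.
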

\subsection{} \label{ss:discussion}
Let us deduce some consequences and corollaries.

Write $l'_0=r_h+l$.
By Theorem \ref{th:JEMS} (see Theorem \ref{th:NJEMSThm} too),
 for all $a_v$ large,   $Q^\calt_{[l'_0]}(l'_0)$ equals
the (quasi)polynomial $\mathfrak{Q}^\calt_h(l)$, and the same is true for
each term in the last sum (by the same theorem applied for $\calt_i$).
Therefore, the identity (\ref{eq:proof1}) guarantees that
 for all $a_v$ large $Q^\calt_{[l'_0],\cali}(l'_0)$ is a quasipolynomial as well.

 \begin{corollary}\label{cor:QP} For $h\in H$ fixed and   $l'_0=r_h+l$, $l\in L$,
 if all  $a_v$ are sufficiently large,
 $Q^\calt_{h,\cali}(l'_0)$ equals
 a quasipolynomial  $\mathfrak{Q}^\calt_{h,\cali}(l)$
 defined on $L$, where
 \begin{equation}\label{eq:QPrestr}
\begin{split}
- \mathfrak{Q}^\calt_{h,\cali}(l):=\ &
\mathfrak{sw}_{-h*\sigma_{can}}(M)+\frac{(K+2r_h+2l)^2+|\mathcal{V}|}{8}\\
-\ & \sum_{i} \Big( \mathfrak{sw}_{-[j^*_i(r_h+l)]*\sigma_{can,i}}(M_i)+\frac{(K(\mathcal{T}_i) +
 2j^*_i(r_h+l))^2+
 |\mathcal{V}(\calt_i)|}{8}\Big).
 \end{split}
 \end{equation}
 \end{corollary}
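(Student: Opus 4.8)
The plan is to prove the key combinatorial identity \eqref{eq:proof1}, namely
\[
Q^\calt_{[l'_0]}(l'_0)=Q^\calt_{[l'_0],\cali}(l'_0)+\sum_i Q^{\calt_i}_{[j^*_i(l'_0)]}(j^*_i(l'_0)),
\]
for all $l'_0=\sum_v a_vE^*_v$ with the $a_v$ sufficiently large; Corollary \ref{cor:QP} then follows instantly by substituting the quasipolynomial formula \eqref{eq:SUMQP} of Theorem \ref{th:NJEMSThm} (applied both to $\calt$ and to each $\calt_i$) into the three terms of \eqref{eq:proof1}, isolating $\mathfrak{Q}^\calt_{h,\cali}(l)$, and observing that it is a difference of (quasi)polynomials hence a quasipolynomial. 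So the real content is \eqref{eq:proof1} itself.

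To prove \eqref{eq:proof1}, the first step is to rewrite everything in terms of truncation sets. For $l'_0\in L'_h$ write
\[
T(l'_0):=\{l'\in\calS'\,:\,[l']=h,\ l'\ngeq l'_0\},\qquad
T_\cali(l'_0):=\{l'\in\calS'\,:\,[l']=h,\ l'|_\cali\ngeq l'_0|_\cali\},
\]
so that $Q^\calt_h(l'_0)=\sum_{l'\in T(l'_0)}z^\calt(l')$ and $Q^\calt_{h,\cali}(l'_0)=\sum_{l'\in T_\cali(l'_0)}z^\calt(l')$. Since $l'\ngeq l'_0$ means there is a coordinate $v$ with $l'_v<(l'_0)_v$, and since the coordinate can be split into those in $\cali$ and those not in $\cali$, we get the set-theoretic decomposition $T(l'_0)=T_\cali(l'_0)\ \cup\ \{l'\in\calS':[l']=h,\ l'|_\cali\geq l'_0|_\cali,\ l'|_{\calv\setminus\cali}\ngeq l'_0|_{\calv\setminus\cali}\}$, a disjoint union. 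Summing $z^\calt$ over the two pieces gives
\[
Q^\calt_h(l'_0)=Q^\calt_{h,\cali}(l'_0)+\sum_{\substack{l'\in\calS',\ [l']=h\\ l'|_\cali\geq l'_0|_\cali,\ l'|_{\calv\setminus\cali}\ngeq l'_0|_{\calv\setminus\cali}}} z^\calt(l').
\]
Thus the whole theorem reduces to showing that, for $a_v$ large, the last sum equals $\sum_i Q^{\calt_i}_{[j^*_i(l'_0)]}(j^*_i(l'_0))$.

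The heart of the argument is therefore an analysis of the generating function $f(\bt)=\prod_{v\in\calv}(1-\bt^{E^*_v})^{\delta_v-2}$ once we force the $\cali$-coordinates of the exponent to be large. The idea: when $l'|_\cali$ is componentwise very large, the monomials $\bt^{E^*_v}$ for $v\in\cali$ are "deep" in the support, and the combinatorial contribution to $z^\calt(l')$ localizes onto the subgraphs $\calt_i=$ connected components of $\calt\setminus\cali$. Concretely I expect that for exponents in the relevant range the coefficient $z^\calt(l')$, summed over the $\cali$-fibres with $l'|_\cali\geq l'_0|_\cali$, factors through the product of the reduced generating functions $\prod_i f_{\calt_i}$, using that $E^*_v(\calt)|_{\calv(\calt_i)}=E^*_v(\calt_i)$ for $v\in\calv(\calt_i)$ (the defining property of $j^*_i$) and that the only vertices of $\calt$ whose valency changes when passing to $\calt_i$ are the neighbours of $\cali$, which are "absorbed" by the largeness hypothesis. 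Matching the truncation $l'|_{\calv\setminus\cali}\ngeq l'_0|_{\calv\setminus\cali}$ against the direct-sum decomposition $\calv\setminus\cali=\sqcup_i\calv(\calt_i)$ — where $l'|_{\calv\setminus\cali}\ngeq l'_0|_{\calv\setminus\cali}$ iff there is some $i$ with $j^*_i(l')\ngeq j^*_i(l'_0)$ in $L'(\calt_i)$, and these events are "independent" across the components — turns the remaining sum into $\sum_i Q^{\calt_i}_{[j^*_i(l'_0)]}(j^*_i(l'_0))$ after a routine inclusion–exclusion over which components satisfy the $\ngeq$ condition (the cross terms where two or more components simultaneously satisfy $\ngeq$ either cancel or are accounted for correctly because $Q$ is linear in $z$ and the $z$'s factor).

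The main obstacle I anticipate is exactly this localization/factorization step: controlling $z^\calt(l')$ on the $\cali$-fibres and showing the contribution genuinely splits as a product over the $\calt_i$, with the neighbours of $\cali$ (whose valencies differ between $\calt$ and $\calt_i$) not spoiling the factorization. This is precisely where "$a_v$ sufficiently large" is needed and where convexity-type estimates on the counting functions enter — which is why the paper defers it to the later sections \ref{s:convMCF}, \ref{s:SurgMCF}, \ref{sec:MCF} via the device of "modified counting functions": one decomposes $Q^\calt_h$ into an alternating sum of such modified counting functions, proves a convexity property making them eventually quasipolynomial, proves a surgery formula for each, and reassembles. I would follow that route: (i) introduce the modified counting functions indexed by subsets of $\cali$ or by the "allowed directions", (ii) prove the stabilization (convexity) lemma guaranteeing they agree with quasipolynomials on a shifted cone, (iii) prove the elementary surgery formula for a single modified counting function where the factorization is transparent, and (iv) take the alternating sum to recover \eqref{eq:proof1}. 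The quasipolynomiality assertions in the statement of Corollary \ref{cor:QP} are then automatic from Theorem \ref{th:NJEMSThm}, and the displayed formula \eqref{eq:QPrestr} is just bookkeeping.
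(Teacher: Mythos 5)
Your opening paragraph is precisely the paper's own proof of Corollary \ref{cor:QP}: granting the identity \eqref{eq:proof1} of Theorem \ref{prop:1}, one substitutes the quasipolynomial of Theorem \ref{th:JEMS}/\ref{th:NJEMSThm} for $Q^\calt_{[l'_0]}(l'_0)$ and, applied to each subgraph (note that $j^*_i(l'_0)=\sum_{v\in\calv(\calt_i)}a_vE^*_v(\calt_i)$ lies deep in the Lipman cone of $\calt_i$ once the $a_v$ are large), for each $Q^{\calt_i}_{[j^*_i(l'_0)]}(j^*_i(l'_0))$; the difference is a quasipolynomial in $l$ since the quadratic parts are polynomial in $l$ and the classes $[j^*_i(r_h+l)]$ are constant on cosets of the sublattice $\widetilde{L}$ of \eqref{eq:kerner}. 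Had you stopped there, citing Theorem \ref{prop:1} (which precedes the corollary in the paper), your argument would be complete and identical to the paper's.

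You instead declare \eqref{eq:proof1} to be the real content, and there the proposal has a genuine gap: the identity is never proved. Your reduction to showing that the sum of $z^\calt(l')$ over $\{l'\in\calS':[l']=h,\ l'|_\cali\geq l'_0|_\cali,\ l'|_{\calv\setminus\cali}\ngeq l'_0|_{\calv\setminus\cali}\}$ equals $\sum_i Q^{\calt_i}_{[j^*_i(l'_0)]}(j^*_i(l'_0))$ is correct bookkeeping but is just a restatement of \eqref{eq:proof1}; all the difficulty sits in that identification, and your argument for it is an expectation that the coefficients ``factor'' and that cross terms ``cancel or are accounted for''. Two concrete problems: first, you treat $j^*_i$ as coordinate restriction (your ``defining property'' $E^*_v(\calt)|_{\calv(\calt_i)}=E^*_v(\calt_i)$ is false --- the correct statement is $j^*_i(E^*_v(\calt))=E^*_v(\calt_i)$, and $j^*_i$ differs from restriction by $E^*$--corrections at the vertices adjacent to $\cali$, cf.\ Section \ref{s:PVan} or Example \ref{ex:graph1}, where $j^*_i(r_h)=-E_0/2$ has negative coefficients), so the asserted equivalence of $l'|_{\calv\setminus\cali}\ngeq l'_0|_{\calv\setminus\cali}$ with the existence of some $i$ with $j^*_i(l')\ngeq j^*_i(l'_0)$ does not hold as stated; second, the right-hand side counts the coefficients $z^{\calt_i}$ of the subgraphs' own series, not fibrewise sums of $z^\calt$, and relating the two is exactly the content of the paper's Lemma \ref{lem:EL} (an explicit support bijection, blow-up stability to handle the valency change at the neighbours of the deleted vertex, and vanishing of the leftover sums) combined with the closure/convexity Proposition \ref{prop:LSz} and a double induction. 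Your steps (i)--(iv) name this roadmap but supply none of these arguments, so as a proof of \eqref{eq:proof1} --- and hence of the corollary as you framed it --- the proposal is incomplete.
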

Except for the `constant term', $\mathfrak{Q}^\calt_{h,\cali}(l)$
is a multivariable quadratic polynomial. However, the constant term is a
`periodic' function $L\to \Q$.  Indeed, $L\mapsto \sqcap_i \,{\rm Spin}^c(M_i)$,
$l\mapsto \{[j^*_i(r_h+l)]*\sigma_{can,i}\}_i$, in general,
 is not constant. However, if we define
$\widetilde{L}\subset L$ as the kernel of the composition
\begin{equation}\label{eq:kerner}
L\longrightarrow \oplus_i\, L'(\calt_i)
\longrightarrow
\oplus_i\, L'(\calt_i)/L(\calt_i),
\ \ \ l\mapsto \oplus_i [j^*_i(l)],
\end{equation}
then the  `constant term' of
$\mathfrak{Q}^\calt_{h,\cali}(l)$ restricted to any class of type
$l_0+\widetilde{L}$ of $L$ is constant.
\begin{corollary}\label{cor:QP2} For $h\in H$ fixed the counting function
 $Q^\calt_{h,\cali}(l'_0)$ ($l'_0\in L'_h$) of $Z_h(\bt_\cali)$
 admits a quasipolynomial in the Lipman cone $\calS'_{\R}$, namely
 $\mathfrak{Q}^\calt_{h,\cali}(l)$, and its periodic constant satisfies
 \begin{equation*}
\begin{split}
-{\rm pc}^{\calS'_{\R}}(Z_h(\bt_{\cali}))=
- \mathfrak{Q}^\calt_{h,\cali}(0)=\ &
\mathfrak{sw}_{-h*\sigma_{can}}(M)+\frac{(K+2r_h)^2+|\mathcal{V}|}{8}\\
-\ & \sum_{i} \Big( \mathfrak{sw}_{-[j^*_i(r_h)]*\sigma_{can,i}}(M_i)+\frac{(K(\mathcal{T}_i) +
 2j^*_i(r_h))^2+
 |\mathcal{V}(\calt_i)|}{8}\Big).
 \end{split}
 \end{equation*}
 \end{corollary}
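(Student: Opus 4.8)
The plan is to obtain Corollary \ref{cor:QP2} as the ``value at $l=0$'' incarnation of Corollary \ref{cor:QP}: once the quasipolynomial $\mathfrak{Q}^\calt_{h,\cali}$ of \eqref{eq:QPrestr} is available on a translate of the Lipman cone, the periodic constant is just $\mathfrak{Q}^\calt_{h,\cali}(0)$. The logical chain I would use is: Theorem \ref{prop:1} furnishes the surgery identity \eqref{eq:proof1} for the truncated sums when all coordinates $a_v$ of $l'_0$ are large; applying Theorem \ref{th:NJEMSThm} to $\calt$ and to each $\calt_i$ rewrites every term of \eqref{eq:proof1} as a quasipolynomial in $l$ (writing $l'_0=r_h+l$), and subtracting gives Corollary \ref{cor:QP}, i.e. $Q^\calt_{h,\cali}(r_h+l)=\mathfrak{Q}^\calt_{h,\cali}(l)$. (The same identity, read at the level of constants, is Theorem \ref{th:MainResult}.) Since ``all $a_v$ large'' means precisely that $l'_0$ lies in some translate $l'_*+\calS'_{\R}$ of the Lipman cone, two things remain: to check that $\mathfrak{Q}^\calt_{h,\cali}$ is genuinely a quasipolynomial in the sense of \ref{ss:perConst}, and then to evaluate it at $0$.

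For the first point I would isolate the non-polynomial part of \eqref{eq:QPrestr}. The terms $\tfrac{(K+2r_h+2l)^2+|\calv|}{8}$ and $\tfrac{(K(\calt_i)+2j^*_i(r_h+l))^2+|\calv(\calt_i)|}{8}$ are honest quadratic polynomials in $l\in L$, since $j^*_i$ is linear; the remaining contributions are the Seiberg--Witten numbers $\mathfrak{sw}_{-[j^*_i(r_h+l)]*\sigma_{can,i}}(M_i)$, which depend on $l$ only through the class $[j^*_i(r_h+l)]=[j^*_i(r_h)]+[j^*_i(l)]\in L'(\calt_i)/L(\calt_i)$. Hence, taking $\widetilde L\subset L$ to be the kernel of $l\mapsto\oplus_i[j^*_i(l)]$ as in \eqref{eq:kerner} --- a finite index sublattice, because its target is finite --- the family $\{[j^*_i(r_h+l)]*\sigma_{can,i}\}_i$ is constant along each coset of $\widetilde L$. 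Therefore $\mathfrak{Q}^\calt_{h,\cali}$ restricts to a quadratic polynomial on every coset of $\widetilde L$, i.e. it is a quasipolynomial with period lattice $\widetilde L$; combined with the first paragraph this shows that the counting function of $Z_h(\bt_\cali)$ admits a quasipolynomial in $\calS'_{\R}$. Then, by \eqref{eq:PCDEF}, $\mathrm{pc}^{\calS'_{\R}}(Z_h(\bt_\cali))=\mathfrak{Q}^\calt_{h,\cali}(0)$, and putting $l=0$ in \eqref{eq:QPrestr}, where $j^*_i(r_h+0)=j^*_i(r_h)$, produces exactly the displayed formula.

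The main obstacle is entirely upstream, in Theorem \ref{prop:1}: once \eqref{eq:proof1} is known, everything above is bookkeeping. To prove \eqref{eq:proof1} I would follow the route announced for Sections \ref{s:convMCF}--\ref{s:SurgMCF}--\ref{sec:MCF}. The difference $Q^\calt_{[l'_0]}(l'_0)-Q^\calt_{[l'_0],\cali}(l'_0)$ collects precisely those coefficients $z^\calt(l')$ with $l'|_\cali\ge l'_0|_\cali$ but $l'\ngeq l'_0$, i.e. those $l'$ that violate the inequality only at vertices lying in the subgraphs $\calt_i$. I would decompose $Q^\calt_h$ into an alternating (inclusion--exclusion) sum of ``modified counting functions'' indexed by the possible patterns of failure among the $\calt_i$, prove a convexity property ensuring that all but the leading terms stabilise to $0$ as $l'_0$ is pushed far into $\calS'_{\R}$, and establish a surgery formula identifying the surviving contribution of each component with $Q^{\calt_i}_{[j^*_i(l'_0)]}(j^*_i(l'_0))$ --- this last step being where the combinatorial relation between $Z^\calt$ and the $Z^{\calt_i}$ coming from the splitting of $\calt$ along $\cali$, together with the operators $j_i$ and $j^*_i$, does the work. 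The delicate point will be the convexity/stabilisation estimate: it must be uniform in $h$ and sharp enough to hold once all $a_v$ are simultaneously large, and it is precisely what forces the hypothesis ``$a_v$ sufficiently large'' in the statement of Theorem \ref{prop:1}.
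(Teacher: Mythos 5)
Your proposal is correct and follows essentially the same route as the paper: Theorem \ref{prop:1} combined with Theorem \ref{th:NJEMSThm} applied to $\calt$ and to each $\calt_i$ gives Corollary \ref{cor:QP}, the quasipolynomial nature of $\mathfrak{Q}^\calt_{h,\cali}$ is secured exactly as in the paper via the finite-index sublattice $\widetilde{L}$ of \eqref{eq:kerner} (the Seiberg--Witten terms depending on $l$ only through the classes $[j^*_i(r_h+l)]$), and evaluation at $l=0$ yields the displayed formula. Your sketch of the upstream Theorem \ref{prop:1} (inclusion--exclusion into modified counting functions, a convexity/stabilisation step, and a component-wise surgery identity) is likewise the strategy the paper carries out in Sections \ref{s:convMCF}--\ref{s:SurgMCF}--\ref{sec:MCF}.
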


Note that this is not the statement of Theorem \ref{th:MainResult} yet.
In order to conclude Theorem \ref{th:MainResult} we make the following discussion.
Above we considered
 $Q^\calt_{[l'_0],\cali}(l'_0)$, and its quasipolynomial, as functions in $l'_0\in L'_h$, or in $l=l'_0-r_h\in L$.
 This is the right point of view: when we take periodic constants of a sum of different quasipolynomials, one has to consider this operation
 in the same lattice. In this way the periodic constant will behave as an additive operator,
 cf. Remark  \ref{rem:harom}.

 However,
 note that $Q^\calt_{[l'_0],\cali}\,(l'_0)$ basically counts the
 coefficients of the reduced series $Z_h(\bt_\cali)$, hence it can be considered also as a counting function defined on the lattice $ L|_\cali$, via
$ l'_0|_{\cali}=l|_\cali+r_h|_{\cali}$.
 In this way, its periodic constant in this lattice should be computed by
 substituting into $l'_0|_\cali$ its representative in the semi-open cube
 associated with variables $\cali$.
 But, the point is that
  this is exactly $r_h|_{\cali}$ (since all the entries
 of $r_h|_\cali$ are automatically in $[0,1)$). Hence, the two periodic constant
 (computed in $L$ or $L|_\cali$) agree and provide
 \begin{equation}\label{eq:PcPc}
 {\rm pc}^{ \calS'_\R}(Z_h(\bt_\cali))=
 {\rm pc}^{\pi_\cali( \calS'_\R)}(Z_h(\bt_\cali)).\end{equation}
This fact, together with Corollary \ref{cor:QP2} prove Theorem \ref{th:MainResult}.
\begin{remark}\label{rem:harom}
 (1) The analogue of (\ref{eq:PcPc}) for the subgraphs
$\calt_i$ (that is, for the operator $j_i^*$ instead of $\pi_\cali$) is not valid.
Let us assume e.g. that  $l'_0\in r_h+\widetilde{L}$, hence $[j^*_i(l'_0)]=
[j^*_i(r_h)]$  is constant, say  $h_i\in H(\calt_i)$.
Consider the following expression
 valid for  any $\tilde{l}'_{0}\in r_{h_i}+L(\calt_i)$ with large coefficients:
\begin{equation}\label{eq:UJ}
Q^{\calt_i}_{[\tilde{l}'_{0}]}(\tilde{l}'_{0})=
-\big(K(\calt_i)+2 \tilde{l}'_{0})^2+|\calv(\calt_i)|\big)/8-
\mathfrak{sw}_{-h_i*\sigma_{can,i}}(M_i).\end{equation}
It can be considered in two different lattices.
First, the right hand side is the quasipolynomial $\mathfrak{Q}^{\calt_i}_{h_i}(\tilde{l}'_{0}-r_{h_i})$
associated with the lattice $L(\calt_i)$.
On the other hand,
if we substitute into  $\tilde{l}'_{0}\in
L'(\calt_i)$ the restriction $j^*_i(l'_0)$, it appears as a quasipolynomial
in variable $l'_0-r_h\in L(\calt)$
(this expression  appears in (\ref{eq:proof1})).
 In the first case, in  $L(\calt_i)$,  its periodic constant is $\mathfrak{Q}^{\calt_i}_{h_i}(r_{h_i})$, while in the second case, in the lattice $L$, it is  $\mathfrak{Q}^{\calt_i}_{h_i}(j^*_i(r_h))$.
 Note that usually $r_{h_i}\not=j_i^*(r_h)$, cf. Example \ref{ex:graph2}.

 The message  is the following: when we take periodic constants of a sum of different quasipolynomials, one has to consider this operation
 in the same lattice.
 However, if one of the periodic constants is needed to
 be reinterpreted as a periodic constant in a different lattice then one has to be aware of the fact that the ${\rm pc}$--operation commutes  with projections of type $\pi_\cali$, but usually not with operators of type $j_i^*$.

(2) One has the following identity (for $\calt$, and similar expressions for any $\calt_i$)
\begin{equation*}\frac{(K+2l')^2+|\mathcal{V}|}{8}=
\frac{K^2+|\mathcal{V}|}{8}-\chi(l'),
\end{equation*}
where $\chi(l'):=-(l',l'+K)/2$ is the `Riemann-Roch expression'
 for any $l'\in L'$.

(3) For certain surgery formulae regarding the invariant $K^2+|\calv|$ see e.g.
\cite[\S 5]{BN}.

\end{remark}
\subsection{}\label{ss:anylift} By different choices of $h\in H$ and of liftings
 $l'_0=r_h+l\in L'$, the possible $spin^c$--structures $\widetilde{\sigma}=l_0'*\widetilde{\sigma}_{can}$ fill in ${\rm Spin}^c(\widetilde{X})$ completely.
$\widetilde{\sigma}$  extends
$h*\sigma_{can}$, and its  restriction to
${\rm Spin}^c(M_i)$ are  $[j^*_i(l'_0)]*\sigma_{can,i}$.
Hence the quasipolynomial identity
(\ref{eq:QPrestr}), for any fixed $\widetilde{\sigma}$,
 can be regarded as a surgery formula of the
Seiberg-Witten invariants connecting $(M, h*\sigma_{can})$ and
$\{(M_i,[j^*_i(l'_0)] *\sigma_{can,i})\}_i$ with correction term
$-\mathfrak{Q}^\calt_{h,\cali}(l)$, computable from the quasipolynomial of
$Z^\calt_h(\bt_\cali)$.

\subsection{} The expression $\mathfrak{Q}^\calt_{h,\cali}(l)$ in
Corollary \ref{cor:QP}
can be rewritten in terms  of certain
periodic constant computable from $\bt_\cali^{-l|_{\cali}}\cdot Z_h^\calt(\bt_\cali)$ as follows.

Assume that $S(\bt)=\sum_{l\in {\mathcal K}}c^S(l)\bt^l$ is a series in
variables $l\in L=\Z\langle E_v\rangle_v$ supported on the cone ${\mathcal K}\subset L\otimes \R$.
We assume that $\mathcal{K}=\R_{\geq 0}\langle V_j\rangle_j$, where all the entries of each $V_j$ are positive.
Let $Q^S(l)=\sum_{\tilde{l}\not\geq l}c^S(\tilde{l})$ be its counting function, and assume that it admits the quasipolynomial $\mathfrak{Q}^S(l)$, which satisfies
 $\mathfrak{Q}^S(l)=Q^S(l)$ in a shifted cone of type $l_*+{\mathcal K}$.
 Then, in a convenient shifted cone,  for any fixed $l_0\in L$ one has
$$\mathfrak{Q}^S(l+l_0)=\sum_{\tilde{l}\not\geq l+l_0}c^S(\tilde{l})=
\sum_{\tilde{l}\not\geq l}c^{\bt^{-l_0}S}(\tilde{l}).$$
Usually, $\bt^{-l_0}S(\bt)$ is not a series (it is a Laurent series),
let $\bt^{-l_0}S(\bt)|_{\geq 0}$ and $\bt^{-l_0}S(\bt)|_{\not\geq 0}$
be its decomposition according to its support.  Then  $\bt^{-l_0}S(\bt)|_{\geq 0}$ is a series, while $\bt^{-l_0}S(\bt)|_{\not\geq 0}$ is a finite Laurent polynomial. (E.g., if $l_0\leq 0$ then $\bt^{-l_0}S|_{\not\geq 0}$ is identically zero,
however, in general it is not.) Furthermore,
for $l$ with large  coefficients, $\sum_{\tilde{l}\not\geq l}c^{\bt^{-l_0}S|_{\not\geq 0}}(\tilde{l})=(\bt^{-l_0}S|_{\not\geq 0})({\bf 1})$
(i.e. one substitutes for each $t_v=1$). This proves the following fact.
\begin{proposition}\label{prop:PCcut}
Under the above notations, for any $l_0\in L$ the series
$\bt^{-l_0}S(\bt)|_{\geq 0}$ admits a quasipolinomial
and a periodic constant in the cone $\mathcal{K}$ and
$$\mathfrak{Q}^S(l_0)=(\bt^{-l_0}S|_{\not\geq 0})({\bf 1})+{\rm pc}^{\mathcal {K}}
(\bt^{-l_0}S|_{\geq 0}).$$
\end{proposition}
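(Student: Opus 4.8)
The plan is to derive the formula from a single index shift together with the fact that, since $S$ is supported on a cone whose generators have strictly positive entries, only finitely many lattice points get ``pushed out of the positive orthant'' by the shift $\bt^{-l_0}$. First I would fix $l_0\in L$ and record that the coefficients of the Laurent series $\bt^{-l_0}S(\bt)$ are $c^{\bt^{-l_0}S}(\tilde l)=c^S(\tilde l+l_0)$, so that for every $l\in L$
\[
Q^S(l+l_0)=\sum_{\tilde l\not\geq l+l_0}c^S(\tilde l)=\sum_{\tilde l\not\geq l}c^S(\tilde l+l_0)=\sum_{\tilde l\not\geq l}c^{\bt^{-l_0}S}(\tilde l);
\]
in particular, on a shifted cone of type $l_*+\mathcal{K}$ (and, if one wishes, along a finite--index sublattice $\widetilde L$ on which $\mathfrak{Q}^S$ is polynomial — note $l_0\in L$ permutes the cosets of $\widetilde L$, so $l\mapsto\mathfrak{Q}^S(l+l_0)$ is again a quasipolynomial with the same period lattice) the left side equals $\mathfrak{Q}^S(l+l_0)$.

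Next I would justify the decomposition $\bt^{-l_0}S=\bt^{-l_0}S|_{\geq 0}+\bt^{-l_0}S|_{\not\geq 0}$. Since $\mathcal{K}=\R_{\geq 0}\langle V_j\rangle_j$ with every $(V_j)_v>0$, any $x=\sum_j\lambda_jV_j\in\mathcal{K}$ has all coordinates $\geq m\sum_j\lambda_j$, where $m:=\min_{j,v}(V_j)_v>0$; hence the set of $x\in(\mathcal{K}-l_0)\cap L$ with $x\not\geq 0$ is bounded, so finite. Therefore $\bt^{-l_0}S|_{\not\geq 0}$ is a finite Laurent polynomial and $\bt^{-l_0}S|_{\geq 0}$ is an honest power series. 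Splitting the counting function of $\bt^{-l_0}S$ along this decomposition, and using that once every coordinate of $l$ is large enough the whole (finite) support of $\bt^{-l_0}S|_{\not\geq 0}$ is contained in $\{\tilde l\not\geq l\}$, the corresponding partial sum stabilizes to the constant $(\bt^{-l_0}S|_{\not\geq 0})({\bf 1})$.

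Combining the two displays, for $l$ with large coordinates
\[
Q^{\bt^{-l_0}S|_{\geq 0}}(l)=Q^S(l+l_0)-(\bt^{-l_0}S|_{\not\geq 0})({\bf 1})=\mathfrak{Q}^S(l+l_0)-(\bt^{-l_0}S|_{\not\geq 0})({\bf 1}),
\]
so $\bt^{-l_0}S|_{\geq 0}$ admits the quasipolynomial $l\mapsto\mathfrak{Q}^S(l+l_0)-(\bt^{-l_0}S|_{\not\geq 0})({\bf 1})$ in the cone $\mathcal{K}$, hence a periodic constant; evaluating at $l=0$ gives ${\rm pc}^{\mathcal{K}}(\bt^{-l_0}S|_{\geq 0})=\mathfrak{Q}^S(l_0)-(\bt^{-l_0}S|_{\not\geq 0})({\bf 1})$, which rearranges to the claimed identity. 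I expect the only delicate point to be the bookkeeping of the shifted cones and the sublattice: one must exhibit a single region on which simultaneously $Q^S=\mathfrak{Q}^S$ holds, $l$ is large enough for the finite part to stabilize, and the defining property of the periodic constant of $\bt^{-l_0}S|_{\geq 0}$ is met. This is routine but should be spelled out; everything else is a finite--support computation.
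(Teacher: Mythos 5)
Your proposal is correct and follows essentially the same route as the paper: the index shift $Q^S(l+l_0)=\sum_{\tilde l\not\geq l}c^{\bt^{-l_0}S}(\tilde l)$, the support decomposition into $\bt^{-l_0}S|_{\geq 0}$ and the finite Laurent polynomial $\bt^{-l_0}S|_{\not\geq 0}$ whose counting function stabilizes to its value at ${\bf 1}$, and evaluation of the resulting quasipolynomial at $l=0$. The extra details you supply (the bound via $m=\min_{j,v}(V_j)_v>0$ and the remark that $l\mapsto\mathfrak{Q}^S(l+l_0)$ is a quasipolynomial with the same period lattice) are exactly the points the paper leaves implicit.
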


Using this identity,  Corollary \ref{cor:QP} (and \ref{ss:anylift} as well)
can be  modified  accordingly.

\subsection{Modified counting functions.} \label{ss:4.4}

 We say that $a,b\in \R^k$ satisfies $a\prec b$ if for {\it all}
 coordinates we have $a_v<b_v$. By inclusion--exclusion principle, a
 sum of type  $Q^\calt_{[l'_0],\cali}\,(l'_0)$
 can be rewritten
  as
 $$
 \sum_{l'|_{\cali}\not\geq l'_0|_{\cali}} z^\calt(l')
=\sum_{\exists w\in \cali\,:\, l'|_{w}< (l'_{0})|_w} z^\calt(l')=
 \sum_{\emptyset \not=\calj\subset \cali} \ (-1)^{|\calj|+1}
 \sum _{l'|_\calj \prec l'_0|_\calj} \ z^\calt(l'),$$
 where everywhere in the summations $[l']=[l_0']$.
This motivates to define (for $l'_0\in L'$ with $[l'_0]=h$)
 the `modified counting functions'
$$q^\calt_{h,\calj} \, (l'_0):=\sum_{l'|_\calj \prec\,   l'_{0}|_\calj, \ [l']=[l'_0]} \ z^\calt(l').$$
There are similar expressions for the terms $Q^{\calt_i}_{[j^*_i(l'_0)]}(j^*_i(l'_0))$  of (\ref{eq:proof1}) as well.
Hence,  we can rewrite the wished identities in terms of modified counting
functions. The point is that we will prove the corresponding identities
 for these modified counting functions.
 Their advantage is that they satisfy certain  `convexity'
properties, which generate a lot of cancellations. They
will be treated in  Section \ref{s:convMCF}, a part which
constitutes also the start of the proof of (\ref{eq:proof1}).

 \begin{remark}
 In \cite{LSz} the expression $q^\calt_{h,\cali} \, (l'_0)$ is called the `coefficient function', since it is the coefficient of $\mathbf{t}^{l'_0}_{\cali}$ in the Taylor expansion of $f_h(\mathbf{t}_{\cali})\cdot\prod_{v\in\cali}\mathbf{t}^{E_v}_{\cali}/(1-\mathbf{t}^{E_v}_{\cali})$.
\end{remark}

\section{A `convexity' property of the modified
counting functions}\label{s:convMCF}

\subsection{Some terminology. Multiplicity systems.}\label{ss:terminology}  Assume that $\calt$ is a graph as above, and $l=\sum_vm_vE_v\in L$ is an integral cycle,
which in the dual base is
$l=\sum_vc_vE^*_v$. For each $v\in\calv$ in $\widetilde{X}$ we consider
 2--discs (cuts) $\{C_{v,i}\}_{i=1}^{k_v}$, each of them
 intersecting $E_v$  transversally  in  generic points and with $\partial C_{v,i}\subset \partial \widetilde{X}$. Then, whenever
   $\sum_i c_{v,i}=c_v$ for all $v$,
   $C(l) :=\sum_v (m_vE_v+\sum_ic_{v,i}C_{v,i})$
 is a relative cycle in $\widetilde{X}$ with $(C(l),E_v)=0$ for all $v$ (hence its class in $H_2(\widetilde{X}, \partial \widetilde{X},\Z)$ is zero).
 Each $\partial C_{v,i}\subset \partial \widetilde{X}$ is a link component in
$ \partial \widetilde{X}$, and their collection endowed with the
multiplicities  $\{c_{v,i}\}_{v,i}$ forms a multilink
with `{\it multiplicity system}'  $\{m_v,c_{v,i}\}_{v,i}$
\cite{EN}, see also \cite{P} for the non--integral  homology sphere case.

If $c_{v,i}\geq 0$ and at least one inequality is strict,  then each $m_v>0$ too
(use the fact that the entries of $E^*_v$ are positive).
 Moreover, under the same hypothesis, the multilink is fibered.

Fix  $C(l) $ and a multiplicity system as above. Let $\calt'$ be a
full connected subgraph of $\calt$ and $\widetilde{X}(\calt')$  a small tubular neighbourhood  of $\cup_{v\in \calv(\calt')}E_v$ in $\widetilde{X}$. Then $C(l)$ induces a homologically trivial relative cycle
 $C(l)\cap \widetilde{X}(\calt')$ (by multiplicity preserving intersection)
 in $\widetilde{X}(\calt')$,
hence a multiplicity system  of $\calt'$. In a different language, the
$E_v$, respectively the $E_v^*$--multiplicities of $l$ after restriction are
the following.
A cut $C_v$ is preserved with its multiplicity $c_v$
if $v\in \calv(\calt')$, otherwise it
becomes empty.
The restriction of $E_v$ becomes empty if $E_v\cap E(\calt')=\emptyset$, it becomes a cut
of $E_w$ with multiplicity $m_v$ if $E_v\cap E(\calt')$ is the point $E_v\cap
 E_w$, and it remains $E_v$ with its multiplicity $m_v$ if $v\in\calv(\calt')$.
(Homologically, this is the operator $j^*$ associated with the inclusion
$j:L(\calt')\hookrightarrow L(\calt)$.)

%

\subsection{}\label{ss:5.1}
  Fix  $\calt $ as in  \ref{ss:PGP}.
  Let $\calt_2$ be a connected full subgraph
  of $\calt$ with vertices $\calv_2$ and `boundary'
  $$\calb:=\{u\in\calv_2\,:\  \exists \ w\not\in \calv_2 \ \mbox{adjacent to $u$ in $\calt$}\}.
  $$
  For any fixed $u\in\calb$, $\calt_{1,u}$ denotes that full connected subgraph
  of $\calt$, which contains $u$ and all the connected components of $\calt\setminus \calt_2$, which have adjacent vertices with $u$.
  Write $\calv_{1,u}=\calv(\calt_{1,u})$.

As above, $Z(\bt_{\calv_2})$ is the
reduction of the series of $\calt$ to the variables indexed by $\calv_2$.

For simplicity, we use the same notation $l'|_u$ for the $E_u$-coefficient of $l'\in L'$ too (cf. \ref{ss:rps}).

\begin{lemma}\label{lem:zetadeg}
(a) Any element from the support of $Z(\bt_{\calv_2})$ can be written in a unique way as $\sum_{v\in \calv_2} r_vE^*_v|_{\calv_2} $ for certain coefficients
$r_v\in\Q_{\geq 0}$.

(b) Fix $u\in\calb$. Let
  $\delta_{2,u}$ be the number of edges adjacent to $u$ but sitting in $\calt_2$.
 Assume that  $\delta_{2,u}\geq 2$.
If \ $\sum_{v\in \calv_2} r_vE^*_v|_{\calv_2} $ is in the support of $Z(\bt_{\calv_2})$, then $r_u\cdot
E^*_u|_u\leq \sum_{v\in\calv_{1,u}} (\delta_v-2)E^*_v|_u$.
\end{lemma}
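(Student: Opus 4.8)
The plan is to first describe the support of $Z(\bt_{\calv_2})$ explicitly, and then, for (b), to exhibit $Z(\bt_{\calv_2})$ as a product of one–variable series and reduce the inequality to a degree estimate for a single factor.

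\emph{Part (a).} Since $Z(\bt)$ is supported on $\calS'=\Z_{\geq0}\langle E^*_v\rangle_v$, the support of $Z(\bt_{\calv_2})$ lies in $\pi_{\calv_2}(\calS')$. As $\calt$ is a tree, every $v\notin\calv_2$ lies in a unique component of $\calt\setminus\calt_2$, attached to $\calt_2$ along a single edge at a single vertex $u(v)\in\calb$, so $v\in\calv_{1,u(v)}$. Splitting $\calv$ into $\calv_2$, the block $\calw:=\calv_{1,u(v)}\setminus\{u(v)\}$, and the remaining block — there are no edges between these last two, and the only $\calv_2$–$\calw$ edges issue from $u(v)$ — the corresponding block form of $-I^{-1}$ yields $\pi_{\calv_2}(E^*_v)=c_v\,\pi_{\calv_2}(E^*_{u(v)})$ with $c_v>0$ (positivity because $-(I|_\calw)^{-1}$ has strictly positive entries on the block of $\calw$ containing $v$). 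Collecting coefficients puts every element of $\pi_{\calv_2}(\calS')$ in the stated form $\sum_{w\in\calv_2}r_wE^*_w|_{\calv_2}$ with $r_w\in\Q_{\geq0}$; uniqueness holds since $\{\pi_{\calv_2}(E^*_w)\}_{w\in\calv_2}$ are the columns of the principal submatrix $(-I^{-1})[\calv_2,\calv_2]$ of the positive definite matrix $-I^{-1}$, hence linearly independent.

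\emph{Part (b), setup.} Put $s_w:=\bt_{\calv_2}^{E^*_w|_{\calv_2}}$ for $w\in\calv_2$; by (a) these act as free (fractional–exponent) variables. Using $E^*_v|_{\calv_2}=c_v\,E^*_{u(v)}|_{\calv_2}$ for $v\notin\calv_2$, one gets the factorization
\[
f(\bt_{\calv_2})=\prod_{w\in\calv_2\setminus\calb}(1-s_w)^{\delta_w-2}\cdot\prod_{u'\in\calb}\Phi_{u'}(s_{u'}),\qquad \Phi_{u'}(s):=\prod_{v\in\calv_{1,u'}}(1-s^{c_v})^{\delta_v-2}
\]
(with $c_{u'}:=1$). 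Hence $Z(\bt_{\calv_2})$ is a product of one–variable series in the $s_w$, and $\sum_w r_wE^*_w|_{\calv_2}$ lies in its support exactly when $s_u^{r_u}$ occurs in the Taylor expansion of $\Phi_u$ (together with analogous conditions at the other vertices, not needed here). So it suffices to show that $\Phi_u$ is a polynomial of degree $N_u:=\sum_{v\in\calv_{1,u}}(\delta_v-2)c_v$: then $r_u\leq N_u$, and multiplying by $E^*_u|_u$ and using $E^*_v|_u=c_v\,E^*_u|_u$ for $v\in\calv_{1,u}$ gives precisely $r_u\,E^*_u|_u\leq\sum_{v\in\calv_{1,u}}(\delta_v-2)E^*_v|_u$.

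\emph{The degree estimate, and the main obstacle.} The $c_v$ depend only on the block $\calv_{1,u}\setminus\{u\}$ and the edges out of $u$, which coincide in $\calt$ and in $\calt_{1,u}$; moreover $\delta_v(\calt)=\delta_v(\calt_{1,u})$ for $v\neq u$ while $\delta_u(\calt)=\delta_u(\calt_{1,u})+\delta_{2,u}$. Therefore $\Phi_u(s)=(1-s)^{\delta_{2,u}}\,Z^{\calt_{1,u}}(s)$, where $Z^{\calt_{1,u}}(s)$ is the reduction of the Poincar\'e series of $\calt_{1,u}$ to the variable at $u$, in the rescaled variable $s=t_u^{(E^*_u(\calt_{1,u}))_u}$. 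Since $\calt_{1,u}$ is a tree, $\sum_{v\in\calv_{1,u}}(\delta_v(\calt_{1,u})-2)=-2$, so $Z^{\calt_{1,u}}(s)$ has a pole of order exactly $2$ at $s=1$; the essential point — and the hard part of the argument — is that it has \emph{no other pole}, i.e. $(1-s)^2\,Z^{\calt_{1,u}}(s)$ is a polynomial. This is where the hypothesis $\delta_{2,u}\geq2$ is genuinely used (for $\delta_{2,u}\leq1$ the double pole survives and $\Phi_u$ is not a polynomial, so the estimate fails), and it reduces to an arithmetic property of the $E^*$–cycles of end–vertices: from the identity $E^*_{v'}(\calt')=(E^*_{v'}(\calt'))_{v'}\bigl(E_{v'}+E^*_{w'}(\calt'\setminus v')\bigr)$ for an end–vertex $v'$ of a connected negative definite tree $\calt'$ (with $w'$ its neighbour; verify $(\,\cdot\,,E_u)_{\calt'}=-\delta_{v'u}$), together with the continuant recursion for the $E^*$–multiplicities along each leg, one shows that every root of a denominator factor $1-t_u^{(E^*_{v'})_u}$ (for $v'$ an end–vertex of $\calt_{1,u}$) is already a root of $1-t_u^{(E^*_u)_u}$. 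Granting this, since $\delta_{2,u}\geq2$ we write $\Phi_u(s)=(1-s)^{\delta_{2,u}-2}\bigl[(1-s)^2Z^{\calt_{1,u}}(s)\bigr]$, a product of polynomials; hence $\Phi_u$ is a polynomial, and being one its degree equals its rational degree $\delta_{2,u}+\sum_{v\in\calv_{1,u}}(\delta_v(\calt_{1,u})-2)c_v=N_u$, which completes the proof of (b).
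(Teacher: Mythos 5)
Part (a) of your proposal is fine and is essentially the paper's argument (the paper gets the proportionality $E^*_v|_{\calv_2}=c_v\,E^*_u|_{\calv_2}$ and the linear independence in the same way), and the reduction of (b) to the statement that the single factor $\Phi_u$ is a polynomial, followed by the degree count, also matches the paper's skeleton. The gap is exactly at the point you yourself flag as ``the hard part'': your proposed elementary proof that $(1-s)^2Z^{\calt_{1,u}}(s)$ is a polynomial rests on the claim that every root of a denominator factor $1-t^{(E^*_{v'})_u}$ ($v'$ an end--vertex of $\calt_{1,u}$) is a root of $1-t^{(E^*_u)_u}$, i.e.\ that $(E^*_{v'})_u$ divides $(E^*_u)_u$. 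This is false in general. Take $\calt_{1,u}$ to be the star with central vertex of self--intersection $-1$ and three leaves of self--intersections $-2,-3,-7$, with the distinguished vertex $u$ being the $-2$ leaf (realize this inside a negative definite $\calt$ by attaching two very negative vertices of $\calt_2$ to $u$, so $\delta_{2,u}=2$; the ratios $c_v$ are intrinsic to $\calt_{1,u}$, as you note). Here the graph is unimodular and $(E^*_u)_u=11$, while the two end--vertices and the node have multiplicities $7,3$ and $21$ at $u$; neither $7$ nor $3$ divides $11$. The lemma still holds, because
\begin{equation*}
\Phi_u(t)=\frac{(1-t^{11})^{\delta_{2,u}-1}\,(1-t^{21})}{(1-t^{7})(1-t^{3})}
\end{equation*}
is a polynomial: the poles at the primitive $7$th and $3$rd roots of unity are cancelled by the \emph{node} factor $1-t^{21}$, not by $1-t^{(E^*_u)_u}$. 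Moreover, even in cases where your divisibility does hold, it would not suffice: multiplying by $(1-s)^2$ (i.e.\ by $(1-t^{(E^*_u)_u})^2$) provides multiplicity two at those roots of unity, while $\calt_{1,u}$ may have arbitrarily many end--vertices, so the denominator cannot in general be absorbed this way; the contribution of the interior node factors is essential, and your argument says nothing about them.

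This is precisely where the paper uses a genuinely non--elementary input. It endows $\calt_{1,u}$ with $\delta_{2,u}$ arrowheads at $u$ carrying a multiplicity system restricted from $dE^*_u$, arranged so that one arrowhead has multiplicity $1$; the associated multilink is fibered with \emph{connected} Milnor fiber, and by A'Campo's theorem $\Phi_u$ (up to rescaling) is its monodromy zeta function, hence of the form $\Delta(t)/(t-1)$ with $\Delta$ the characteristic polynomial of the monodromy on $H_1$ of the fiber. This kills all possible poles away from $t=1$ at once, and the only remaining issue, the order at $t=1$, is handled by the Euler--characteristic count $\sum_{v\in\calv_{1,u}}(\delta_v-2)=-2+\delta_{2,u}\geq 0$ — the step your proposal does carry out correctly. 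So to complete your write--up you would either have to import this topological argument (or some other global cancellation statement about the cyclotomic factors of $\prod_v(1-t^{m_v})^{\delta_v-2}$), since the purely local end--vertex recursion you invoke cannot produce it.
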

\begin{proof} (a)
For $u\in\calb$ and $v\in \calv_{1,u}$,
the cycle $E^*_v\cdot E^*_u|_u- E^*_u\cdot E^*_v|_u\in L\otimes \Q$ is supported in $\calv_{1,u}\setminus u$.
(Indeed, the $E_u$--multiplicity of $l:=E^*_v\cdot E^*_u|_u- E^*_u\cdot E^*_v|_u$
is zero. Therefore, the restriction of the multiplicity system $C(l)$ to $\calv_{1,u}\setminus u$ has no cuts, has it is identically zero, cf. \ref{ss:terminology}.)
In particular,
\begin{equation}\label{eq:staridentity}
E^*_v|_{\calv_2}= E^*_u|_{\calv_2}\cdot (E^*_v|_u/ E^*_u|_u).\end{equation}
Next, write $Z(\bt)$ as $Z_2(\bt)\cdot \prod_{u\in \calb}Z_{1,u}(\bt)$, where
$Z_{1,u}(\bt):= \prod_{v\in\calv_{1,u}} (1-\bt^{E^*_v})^{\delta_v-2}$ and
$Z_2(\bt):= \prod_{v\in\calv_2\setminus \calb} (1-\bt^{E^*_v})^{\delta_v-2}$.
 Hence, in the support of $Z(\bt_{\calv_2})$, $Z_2(\bt_{\calv_2})$
 contributes with  $\{E_v^*|_{\calv_2}\}_{v\in \calv_2\setminus \calb}$,
while,
 $Z_{1,u}(\bt_{\calv_2})$ with $E_u^*|_{\calv_2}$ for each $u\in\calb$.
Moreover,
$\{E^*_v|_{\calv_2}\}_{v\in \calv_2}$ are linearly independent.
 Indeed, if  $\sum_{v\in \calv_2}r_vE^*_v|_{\calv_2}=0$ then
$x:=\sum_{v\in \calv_2}r_vE^*_v$
is supported in $\calv\setminus \calv_2$, but $(x,E_v)=0$ for any $v\in \calv\setminus \calv_2$; hence $x=0$ since the intersection form of any subgraph is non--degenerate, see  Lemma \ref{lem:modform} too.

(b)
We construct the following graph $\calt_{1,u}'$ with arrowheads:
$\calt_{1,u}'$ consists of
all the vertices and edges of $\calt_{1,u}$, and we also add $\delta_{2,u}$
arrowheads attached to $u$ (that is, we replace
 the $u$--adjacent edges from $\calt_2$ by arrowheads).
Each arrowhead represent a cut (of $E_u$) in
  $\widetilde{X}(\calt_{1,u})$.
  We regard their collection as a multilink, that is, we endow the vertices and arrowheads with a multiplicity system (of  $\calt_{1,u}'$) as in \ref{ss:terminology}.
We define  the $m_v$--multiplicities as the
 multiplicities of $dE_u^*$ restricted to $\calv_{1,u}$, where  $d=\det(-(\,,\,)_\calt)$.
That is,  the  multiplicity of a vertex $v$ is
$dE^*_u|_v=dE^*_v|_u=-d(E^*_v,E^*_u)\in\Z_{>0}$.
Then the sum of the multiplicities of the arrowheads (all of them at $u$)
should be $c_u=d+d\sum_w
E^*_u|_w$, where the sum runs over the adjacent vertices $w$ of $u$ in $\calt_2$
(there are $\delta_{2,u}$ of them). Since each $ dE^*_u|_w\geq 1$, we get
$c_u\geq d+\delta_{2,u}$, hence
we can distribute $c_u$ into $\delta_{2,u}$ positive integers, such that
one of them is 1. These integers will be the multiplicities of the arrowheads
(link components).

The constructed
 multiplicity system defines a fibred multilink (cf. \cite{EN}).
Since one of the multiplicities is 1, the corresponding Milnor fiber is connected
(see also \cite[Th. 11.3]{EN}).
 Furthermore, the monodromy zeta function of the Milnor fibration is
(by A'Campo's theorem \cite{AC} or \cite{EN})
$\zeta(t)=\prod_{v\in\calv_{1,u}}(1-t^{dE^*_v|_u})^{\delta_v-2}$.
Hence, comparing the definitions of $\zeta$ and  $Z_{1,u}$,
 and using (\ref{eq:staridentity}), we get
that the reduced series of $Z_{1,u}$ is obtained by the following substitution:
 \begin{equation}\label{eq:zetaev}
 Z_{1,u}(\bt_{\calv_2})=\zeta(t)|_{t\mapsto \bt_{\calv_2}^{E^*_u|_{\calv_2}/ dE^*_u|_u}}.\end{equation}
We claim that if  $\delta_{2,u}\geq 2$ then  $\zeta(t)$ is a
polynomial. Indeed, being a zeta function of a connected Milnor fiber,
it has the form $\Delta(t)/(t-1)$, where $\Delta(t)$ is the
characteristic polynomial of the monodromy
of the first homology of the Milnor fiber. Hence, $\zeta$ is a polynomial if and only if it has no pole at $t=1$. But, since $\calt_{1,u}$ is a tree,  the vanishing order
${\rm ord}_{t-1}\zeta(t)=\sum_{v\in\calv_{1,u}}(\delta_v-2)=-2+\delta_{2,u}
\geq 0$.

Furthermore, the degree of $\zeta$ is
$\sum_{v\in\calv_{1,u}} (\delta_v-2)dE^*_v|_u$,
and  the (rational) $E^*_u|_{\calv_2}$ degree of $Z_{1,u}(\bt_{\calv_2})$
is $\deg(\zeta)/(dE^*_u|_u)$.
  Finally, by  (a) and its proof, all contribution in  the coefficient of $E^*_u|_{\calv_2}$
in $Z(\bt_{\calv_2})$ comes from $Z_{1,u}(\bt_{\calv_2})$.
\end{proof}

\subsection{}
Recall that the cycles $\{-E^*_v\}_{v\in\calv}$, considered as column vectors of a matrix, form
the inverse $(\,,\,)^{-1}$ of the intersection form. A similar property is valid
for the restrictions $\{-E^*_v|_{\calv_2}\}_{v\in\calv_2}$.

For a graph $\calt$ we say that a bilinear form $(\,,\,)_{mod}$ of $L\otimes \Q$
 is a {\it modified intersection form} of $(\,,\,)=(\,,\,)_\calt$,
 if $(E_v,E_w)_{mod}=(E_v,E_w)$ for any $v\not=w$ (and the diagonal might be modified,
 usually into some  rational entries).

\begin{lemma}\label{lem:modform}\ \cite[Lemma 11 (iii)]{LSz}
Let $\calv_2$ be as in \ref{ss:5.1}. The $|\calv_2|$--rank matrix $\{-E^*_v|_{\calv_2}\}_{v\in\calv_2}$ is the inverse of a negative definite matrix
$(\,,\,)_{mod}$, a modified intersection form of $(\,,\,)_{\calt_2}$.
(In fact, all the diagonal entries,  which are modified are indexed by   $\calb$.)
\end{lemma}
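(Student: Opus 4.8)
The plan is to identify the matrix $\{-E^*_v|_{\calv_2}\}_{v\in\calv_2}$ with a principal submatrix of $(\,,\,)^{-1}$, and then to compute its inverse by a Schur complement, reading off the shape of the modification from the tree structure of $\calt$.

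As recalled just above, $-E^*_v$ is the $v$-th column of $(\,,\,)^{-1}$; equivalently the coefficient of $E_w$ in $-E^*_v$ is $\big((\,,\,)^{-1}\big)_{vw}$. Partition $\calv=\calv_2\sqcup\calv_3$ with $\calv_3:=\calv\setminus\calv_2$, and write the intersection form of $\calt$ in the corresponding block form
\[
(\,,\,)=\begin{pmatrix}G_{22}&G_{23}\\G_{32}&G_{33}\end{pmatrix},\qquad G_{22}=(\,,\,)_{\calt_2}.
\]
Then the matrix $M:=\{-E^*_v|_{\calv_2}\}_{v\in\calv_2}$ (columns indexed by $v\in\calv_2$, rows by $w\in\calv_2$) is precisely the principal block $\big((\,,\,)^{-1}\big)_{\calv_2\calv_2}$. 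Since $(\,,\,)$ is negative definite, so is $(\,,\,)^{-1}$ and hence also its principal submatrix $M$; in particular $M$ is invertible, which yields the claimed rank $|\calv_2|$, and $(\,,\,)_{mod}:=M^{-1}$ is negative definite.

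Next I would compute $M^{-1}$. The full subgraph $\calt\setminus\calv_2$ of the negative definite graph $\calt$ has negative definite — in particular invertible — intersection form $G_{33}$. The Schur complement formula then gives $\big((\,,\,)^{-1}\big)_{\calv_2\calv_2}=\big(G_{22}-G_{23}G_{33}^{-1}G_{32}\big)^{-1}$, hence
\[
(\,,\,)_{mod}=M^{-1}=(\,,\,)_{\calt_2}-N,\qquad N:=G_{23}G_{33}^{-1}G_{32}.
\]
It remains to prove that $N$ is a diagonal matrix supported on $\calb$. Two facts about $\calt$ do this. First, for $v\in\calv_2\setminus\calb$ and $w\notin\calv_2$ one has $(E_v,E_w)=0$, so the rows of $G_{23}$ indexed by $\calv_2\setminus\calb$ vanish; therefore $N$ is supported on $\calb\times\calb$, and no diagonal entry outside $\calb$ is altered. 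Second, since $\calt$ is a tree, every connected component of $\calt\setminus\calv_2$ is adjacent to exactly one vertex of $\calb$ (a component adjacent to two of them, together with a path in $\calt_2$ joining them, would produce a cycle). Consequently $G_{33}$, being the intersection form of the disjoint union $\calt\setminus\calv_2$, is block diagonal along these components, and so is $G_{33}^{-1}$; moreover for distinct $u,u'\in\calb$ the support of the $u$-th row of $G_{23}$ and the support of the $u'$-th column of $G_{32}$ lie in different blocks, whence $N_{uu'}=0$. Combining the two facts, $N$ is diagonal with support in $\calb$, so $(\,,\,)_{mod}$ coincides with $(\,,\,)_{\calt_2}$ off the diagonal and at the diagonal entries outside $\calb$, and differs from it only in the diagonal entries indexed by $\calb$. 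This is exactly the assertion.

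Everything here is routine linear algebra once the blocks are set up; the only genuine input is the combinatorial observation that each component of $\calt\setminus\calv_2$ hangs off a single vertex of $\calb$, and that is the one place where the tree hypothesis on $\calt$ is essential — without it one would still get that $N$ is supported on $\calb\times\calb$, but not that it is diagonal.
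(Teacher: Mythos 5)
Your proof is correct. The identification of $\{-E^*_v|_{\calv_2}\}_{v\in\calv_2}$ with the principal block $\big((\,,\,)^{-1}\big)_{\calv_2\calv_2}$ is right (it is exactly the restatement of the fact, recalled just before the lemma, that the $-E^*_v$ are the columns of $(\,,\,)^{-1}$), negative definiteness and invertibility of a principal submatrix of a negative definite matrix give the rank statement, the Schur complement identity $\big((\,,\,)^{-1}\big)_{\calv_2\calv_2}=\big(G_{22}-G_{23}G_{33}^{-1}G_{32}\big)^{-1}$ applies since $G_{33}$ is itself negative definite, and your two combinatorial observations (rows of $G_{23}$ indexed by $\calv_2\setminus\calb$ vanish; each connected component of $\calt\setminus\calv_2$ is attached to exactly one vertex of $\calb$, since $\calt$ is a tree and $\calt_2$ is connected, so $G_{33}^{-1}$ being block diagonal over these components kills every off-diagonal entry of $N=G_{23}G_{33}^{-1}G_{32}$) do show that $N$ is diagonal and supported on $\calb$, which is precisely the assertion about which entries of $(\,,\,)_{\calt_2}$ get modified.

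Regarding the comparison: the paper does not prove the lemma at all; it cites \cite[Lemma 11 (iii)]{LSz} and only remarks that the proof there rests on the diagonalization procedure of $(\,,\,)_\calt$ from \cite[\S 21]{EN}, i.e.\ on eliminating the vertices of $\calv\setminus\calv_2$ one at a time by Gaussian-type row/column operations adapted to the tree. Your one-step Schur complement is exactly the closed form of that iterated elimination, so the two arguments are the same in spirit; what your write-up buys is a short, self-contained linear-algebra proof in which the only graph-theoretic input is isolated cleanly (each component of $\calt\setminus\calv_2$ hangs off a single boundary vertex), while the iterative diagonalization of \cite{EN} is better suited to explicit computations of the modified diagonal entries along strings. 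One tiny wording point: for $u\in\calb$ the support of the $u$-th row of $G_{23}$ may meet several blocks of $G_{33}$ (a boundary vertex can touch several components), so the precise statement you need, and in fact prove, is that no single block meets both the support of the $u$-th row of $G_{23}$ and that of the $u'$-th column of $G_{32}$ for $u\neq u'$; this is what "each component is adjacent to exactly one vertex of $\calb$" gives, so the argument stands.
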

The proof is based on a diagonalization procedure of $(\,,\,)_\calt$ from \cite[\S 21]{EN}.

\subsection{} Let $\calt$ be as in \ref{ss:5.1}, and let us fix  a subset $\cali\subset \calv$, $\cali\not=\emptyset$.
The closure $\overline{\cali}$ of $\cali$ is defined as the
set of vertices of that connected minimal full subgraph of $\calt$ which contains $\cali$.

The following proposition  was first proved
(with slightly weaker bound) in \cite{LSz}
using residue formulae for vector partitions of \cite{SzV}. Here we provide an independent proof.

\begin{prop}\label{prop:LSz}
Assume that $l_0'\in \sum_v(\delta_v-2)E^*_v+\calS'$, $[l'_0]=h$.
Then \ $q^\calt_{h,\cali} (l'_0)=q^\calt_{h,\overline{\cali}} (l'_0)$.
\end{prop}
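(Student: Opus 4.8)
The plan is to reduce the statement $q^\calt_{h,\cali}(l'_0)=q^\calt_{h,\overline{\cali}}(l'_0)$ to the case where $\overline{\cali}$ differs from $\cali$ by a single vertex, and then to exploit Lemma \ref{lem:zetadeg}(b) to show that the extra coordinate never produces a relevant constraint. More precisely, since both counting functions only sum coefficients $z^\calt(l')$, and $q^\calt_{h,\cali}(l'_0)$ counts those $l'$ with $l'|_\calj\prec l'_0|_\calj$ for some $\emptyset\ne\calj\subset\cali$, it suffices by the inclusion--exclusion rewriting of \ref{ss:4.4} to prove the equality of the modified counting functions $\sum_{l'|_\calj\prec l'_0|_\calj}z^\calt(l')$ when we enlarge $\calj$. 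Actually I would argue the following way: choose $u\in\overline{\cali}\setminus\cali$; then $u$ has $\delta_{2,u}\geq 2$ adjacent edges inside the connected subgraph spanned by $\cali$ (this is exactly why $u$ lies in the closure — it sits on a path between two vertices of $\cali$), so Lemma \ref{lem:zetadeg}(b) applies with $\calt_2=\calt_{\overline{\cali}}$.

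First I would set $\calv_2=\overline{\cali}$ and invoke Lemma \ref{lem:zetadeg}(a): every $l'$ in the support of $Z(\bt_{\calv_2})$, equivalently every $l'|_{\calv_2}$ with $z^\calt(l')\neq 0$, is of the form $\sum_{v\in\calv_2}r_vE^*_v|_{\calv_2}$ with $r_v\geq 0$. The key point is to bound $l'|_u=\sum_{v\in\calv_2}r_v E^*_v|_u$ from above. By Lemma \ref{lem:zetadeg}(b), $r_u\cdot E^*_u|_u\leq \sum_{v\in\calv_{1,u}}(\delta_v-2)E^*_v|_u$; combining this with the nonnegativity of the other terms and with identity \eqref{eq:staridentity} relating $E^*_v|_{\calv_2}$ to $E^*_u|_{\calv_2}$ for $v\in\calv_{1,u}$, I would obtain a bound of the shape $l'|_u\leq\bigl(\sum_v(\delta_v-2)E^*_v\bigr)|_u$ on the whole support, where the right side is the $u$-coordinate appearing in the hypothesis $l'_0\in\sum_v(\delta_v-2)E^*_v+\calS'$. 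The hypothesis then gives $l'_0|_u\geq\bigl(\sum_v(\delta_v-2)E^*_v\bigr)|_u\geq l'|_u$, so the strict inequality $l'|_u<l'_0|_u$ is automatic on the support.

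With this in hand, comparing $q^\calt_{h,\cali}(l'_0)$ and $q^\calt_{h,\cali\cup\{u\}}(l'_0)$: the latter sums over $l'$ with $l'|_\calj\prec l'_0|_\calj$ for some nonempty $\calj\subset\cali\cup\{u\}$. Any such $\calj$ containing $u$ contributes, after intersecting with the support, the same set as $\calj\setminus\{u\}$ (if nonempty) because the constraint at $u$ is vacuous there; and the term $\calj=\{u\}$ alone contributes the whole support, which is already subsumed — wait, this needs care, so I would instead run the inclusion--exclusion formula of \ref{ss:4.4} directly: $q^\calt_{h,\cali}(l'_0)=\sum_{\emptyset\neq\calj\subset\cali}(-1)^{|\calj|+1}q^\calt_{h,\calj}(l'_0)$ where $q^\calt_{h,\calj}(l'_0)=\sum_{l'|_\calj\prec l'_0|_\calj}z^\calt(l')$, and likewise for $\cali\cup\{u\}$; since on the support $l'|_u<l'_0|_u$ always holds, $q^\calt_{h,\calj\cup\{u\}}(l'_0)=q^\calt_{h,\calj}(l'_0)$ for every $\calj$, and the alternating-sum identity $\sum_{\emptyset\neq\calj\subset\cali\cup\{u\}}(-1)^{|\calj|+1}q^\calt_{h,\calj}(l'_0)=\sum_{\emptyset\neq\calj\subset\cali}(-1)^{|\calj|+1}q^\calt_{h,\calj}(l'_0)$ follows from a standard binomial cancellation (the $u$-containing terms telescope against the $u$-free ones). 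Iterating over the finitely many vertices of $\overline{\cali}\setminus\cali$ finishes the proof.

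The main obstacle I anticipate is the bookkeeping in the second step: making sure the bound from Lemma \ref{lem:zetadeg}(b), which is stated for the reduced series $Z(\bt_{\calv_2})$ in terms of the $E^*_v|_{\calv_2}$ coordinates, is correctly transported to a statement about $l'|_u$ for $l'$ in the support of the \emph{full} series $Z(\bt)$ — one must check that the $u$-coordinate of $l'$ depends only on $l'|_{\calv_2}$, which it does, and that the decomposition $Z=Z_2\cdot\prod_u Z_{1,u}$ from the proof of \ref{lem:zetadeg}(a) lets one read off the $E^*_u|_{\calv_2}$-degree additively. I would also want to double-check the degenerate case $\delta_{2,u}<2$ cannot occur for $u\in\overline{\cali}\setminus\cali$: indeed if removing $u$ disconnected fewer than two $\cali$-containing pieces then $u$ would be removable from the connected subgraph still containing $\cali$, contradicting minimality of $\overline{\cali}$ — so $\delta_{2,u}\geq 2$ always, and Lemma \ref{lem:zetadeg}(b) genuinely applies.
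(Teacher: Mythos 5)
There is a genuine gap, and it is located exactly where you flagged your "main obstacle": the transport of Lemma \ref{lem:zetadeg}(b) from the reduced series to the full one is not valid. That lemma bounds the $E^*_u|_{\calv_2}$--coefficient of elements in the support of the \emph{reduced} series $Z(\bt_{\calv_2})$, i.e.\ after the coefficients of all $l'$ with the same restriction $l'|_{\calv_2}$ have been summed up (and possibly cancelled). It says nothing about the $u$--coordinate of an individual $l'$ in the support of the full series $Z(\bt)$. In fact no bound of the shape $l'|_u\leq\bigl(\sum_v(\delta_v-2)E^*_v\bigr)|_u$ can hold on ${\rm Supp}(Z_h)$: the factors $(1-\bt^{E^*_w})^{-1}$ attached to end--vertices $w$ contribute exponents $b_wE^*_w$ with $b_w$ arbitrarily large, and every entry of $E^*_w$ (in particular at $u$) is strictly positive, so $l'|_u$ is unbounded on the support. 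Consequently the constraint $l'|_u\prec l'_0|_u$ at a vertex $u\in\overline{\cali}\setminus\cali$ is \emph{not} automatic, the two index sets defining $q^\calt_{h,\cali}(l'_0)$ and $q^\calt_{h,\overline{\cali}}(l'_0)$ genuinely differ, and the whole content of the proposition is that the sum of $z^\calt(l')$ over the difference vanishes. This is what the paper's proof establishes: it groups the offending cycles (those with $m_u\geq 0$ at some boundary vertex) into fibers of the restriction $l'\mapsto l'|_{\overline{\cali}}$, uses Lemma \ref{lem:modform} (the negative definite modified intersection form) to produce a boundary vertex with $c_{1,u}>0$ and $\delta_{2,u}\geq 2$, and only then invokes Lemma \ref{lem:zetadeg}(b) to conclude that the \emph{fiberwise} coefficient sums are zero; a separate multilink/fiberedness argument handles the remaining "easy case". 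None of these cancellation mechanisms appears in your proposal, so the argument as written does not go through.

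Two smaller points. First, even granting your bound, the deduction of \emph{strict} inequality $l'|_u<l'_0|_u$ from the two weak inequalities $l'|_u\leq\bigl(\sum_v(\delta_v-2)E^*_v\bigr)|_u\leq l'_0|_u$ is a non sequitur (take $l'_0$ with $\calS'$--part zero at $u$). Second, you conflate $q^\calt_{h,\cali}$ with $Q^\calt_{h,\cali}$: the modified counting function $q^\calt_{h,\cali}(l'_0)$ requires $l'|_v<l'_0|_v$ for \emph{all} $v\in\cali$, and it is $Q^\calt_{h,\cali}$, not $q^\calt_{h,\cali}$, that equals the alternating sum $\sum_{\emptyset\neq\calj\subset\cali}(-1)^{|\calj|+1}q^\calt_{h,\calj}(l'_0)$; this detour is harmless for your intended conclusion (it would suffice to prove $q_{h,\calj\cup\{u\}}=q_{h,\calj}$ for $\calj=\cali$ alone), but the misstatement should be fixed. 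Your observation that $\delta_{2,u}\geq 2$ for every $u\in\overline{\cali}\setminus\cali$ is correct and is indeed used in the paper, but only as one ingredient of the cancellation argument, not as a pointwise vacuity statement.
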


\begin{proof}
Let us write $\calt_2$ for the full connected subgraph with $\calv_2=\calv(\calt_2)=\overline{\cali}$, and we  adopt the notations of \ref{ss:5.1} associated with $\calt_2$.
Furthermore, we use the following
notations as well: $l'_0=\sum_va_vE^*_v$ is the fixed element of $L'$ appearing in the statement, $l'=\sum_vb_vE^*_v$ is an element from the support
${\rm Supp}(Z_h)$ of $Z_h$ (i.e. $z^\calt(l')\not=0$), and $l=l'-l'_0\in L$ with $l|_{\cali}\prec 0$.
(Such $l'$  parametrize the support of the sum in $q^\calt_{h,\cali} (l'_0)$.)
Write $c_v=b_v-a_v$ and $l=\sum_vm_vE_v$ (hence  $\{m_v,c_v\}_v$
is a multiplicity system in the sense of \ref{ss:terminology}).
The assumption $l|_{\cali}\prec 0$ reads as $m_v<0$ for all $v\in\cali$.

We wish to compare the sets
$\{l'\in {\rm Supp}(Z_h)\,:\,  (l'-l'_0)|_{\cali}
\prec 0\}$ and $\{l'\in {\rm Supp}(Z_h)\,:\,  (l'-l'_0)|_{\overline{\cali}}
\prec 0\}$ for fixed $l'_0$. If they agree then definitely we get
$q^\calt_{h,\cali} (l'_0)=q^\calt_{h,\overline{\cali}} (l'_0)$
(since we sum over the same set).
 The point is that these two sets can be  different, however, we show that
 the sum of the coefficients over the support--difference is zero.

To start the proof, let us fix some $l'\in {\rm Supp}(Z_h)$ with 
$(l'-l'_0)|_{\cali} \prec 0$.

First, we check an easy inequality.
Let us take $v\in\calv$ with $\delta_v>1$.
Then, by the assumption of \ref{prop:LSz},
$a_v\geq \delta_v-2$. But, by the shape of the rational function $f(\bt)$
from (\ref{eq:1.1}), $b_v\leq \delta_v-2$. Hence
\begin{equation}\label{eq:cv}
c_v\leq 0 \ \ \mbox{whenever}  \ \ \delta_v>1.\end{equation}
\noindent {\it The proof in the `easy case'.}   Assume that $m_u<0$ for all $u\in \calb$.
We claim that $m_v<0$ for all $v\in \overline {\cali}$, hence
$q^\calt_{h,\cali} (l'_0)=q^\calt_{h,\overline{\cali}} (l'_0)$ by the above discussion.

 Assume that this is not the case, and choose a maximal connected full subgraph $\calt'$
of $\calt_2$ with all $m_v$--multiplicities non--negative.

Let $C(l)=\sum_v(m_vE_v+c_vC_v)$ be the homologically trivial relative cycle in $\widetilde{X}$ associated with $l$ (with some choices of cuts $C_v$) as in
\ref{ss:terminology}.
Then $C(l) $ induces a relative cycle and a multiplicity system
 of $\calt'$  via the multiplicity preserving intersection
 $C(l)\cap \widetilde{X}(\calt')$, as it is explained in \ref{ss:terminology}.

By construction $\calv(\calt')\subset \overline{\cali}\setminus \cali$ and
for all $v\in \calv(\calt')$ one has $\delta_v>1$.
Therefore, those cut--multiplicities which come as restrictions of cuts
of $C(l)$  are  $\leq 0$ by (\ref{eq:cv}). The other cut--multiplicities, which come from the restriction of some neighboring $E_v$'s have multiplicities
$m_v<0$ (by the maximality of $\calt'$). Therefore,
 the restriction of $C(l)$ to $\calt'$ has all
 cut--multiplicities $\leq 0$, with at least
one $<0$ (at the `boundary' of $\calt'$). On the other hand,
 all $E_v$--multiplicities $\geq 0$. These facts contradict
the last sentence  of \ref{ss:terminology}.

\noindent
 {\it The proof in the  general case.}  Assume that $m_u\geq 0$ for at least
 one $u\in\calb$.

 Let us define the rational coefficients $\{c_{1,u}\}_{u\in \calv_2}$ as follows:
 \begin{equation}\label{eq:DEFcv}
 c_{1,u}:=\left\{\begin{array}{cc}
 \sum_{v\in \calv_{1,u}} c_v E^*_v|_u/ E^*_u|_u & \ \mbox{if} \ u\in \calb\\
 c_u & \ \,\mbox{if} \ u\not \in\calb.\end{array}\right. \end{equation}
Then,  by (\ref{eq:staridentity}), for each $u\in\calb$,
 $\sum_{v\in \calv_{1,u}} c_v E^*_v|_{\calv_2}= c_{1,u} E^*_u|_{\calv_2}$, hence
 \begin{equation}\label{eq:sumlI}
 \sum_{v\in \calv}c_vE^*_v|_{\calv_2}=\sum_{v\in\calv_2}c_{1,v}E^*_v|_{\calv_2}=
 \sum_{v\in\calv_2}m_vE_v|_{\calv_2}=l|_{\calv_2}.
 \end{equation}
\begin{clm}
There exists $u\in \calb$ with $m_u\geq 0$, $c_{1,u}>0$ and $\delta_{2,u}\geq 2$.
(For the definition of $\delta_{2,u}$ see Lemma \ref{lem:zetadeg}(b).)\end{clm}
 \begin{proof}
 Set  $\calv_2^{<0}:=\{v\in\calv_2\,:\, m_v< 0\}$
 and $\calv_2^{\geq 0}:=\{v\in\calv_2\,:\, m_v\geq 0\}$.
 By assumptions $\cali\subset \calv_2^{<0}$ and $\calv_2^{\geq 0}\not=\emptyset $ too.
 Write $l|_{\calv_2}$ as $l_1-l_2$, $l_1$ supported in $\calv_2^{\geq 0}$, while
  $l_2$ supported in $\calv_2^{< 0}$, both effective. Consider also the negative definite
  modified intersection form
  $(\,,\,)_{mod}$ associated with $\{-E^*_v|_{\calv_2}\}_{v\in \calv_2}$, defined in Lemma
  \ref{lem:modform}.  If $l_1\not=0$ then $(l|_{\calv_2},l_1)_{mod}\leq (l_1,l_1)_{mod}<0$,
  hence there exists $u\in \calv_2^{\geq 0}$ (in the support of $l_1$) such that $(l|_{\calv_2},E_u)_{mod}<0$.
  If $l_1=0$, since $\calt_2$ is connected, one can find $u\in \calv_2^{\geq 0}$ such that
  $E_u$ intersects the support of $l|_{\calv_2}$,
  hence $(l|_{\calv_2},E_u)_{mod}<0$ again.
  But, via (\ref{eq:sumlI}, $(l|_{\calv_2},E_u)_{mod}=-c_{1,u}$. Hence, there exists
  $u\in \calv_2^{\geq 0}$ such that $c_{1,u}>0$.
  Using (\ref{eq:cv}) and  definition (\ref{eq:DEFcv}) we get that $u\in\calb$ necessarily.
 On the other hand,  $\delta_{2,u}\geq 2$ too.
  Indeed, if  $u\in\calb$ and $\delta_{2,u}=1$
then  $u\in \cali$ (since
 $\calv_2=\overline{\cali}$ is  the closure of $\cali$) hence $m_u<0$.
\end{proof}

Let us introduce the coefficients  $\{b_{1,v}\}_{v\in\calv_2}$
associated with $\{b_v\}_{v\in\calv}$ by similar definitions as
(\ref{eq:DEFcv}).
The assumption regarding $a_v$'s, and  $c_{1,u}>0$, we obtain that
the $E^*_u|_{\calv_2}$--coefficient $b_{1,u}$ of $l'|_{\calv_2}$ satisfies
$$b_{1,u}= \sum_{v\in \calv_{1,u}} b_v E^*_v|_u/ E^*_u|_u >
\sum_{v\in \calv_{1,u}} a_v E^*_v|_u/ E^*_u|_u\geq d_{1,u}:=
\sum_{v\in \calv_{1,u}} (\delta_v-2) E^*_v|_u/ E^*_u|_u.$$
In particular, by Lemma \ref{lem:zetadeg},
$l'|_{\calv_2}$  is not  in the support of $Z_h(\bt_{\calv_2})$.

 This fact  can be reorganized as follows.
We order $\{u\in \calb\,:\, \delta_{2,u}\geq 2\} =\{u_1,\ldots ,u_s\}$.
We set $${\rm Supp}_1:=
\{l'\,:\, [l']=[l'_0],\
l'|_{\cali}\prec l'_0|_{\cali}, \ b_{1,u_1}>d_{1,u_1}\}$$
 and for $s\geq j>1$
 $${\rm Supp}_j:=
\{l'\,:\, [l']=[l'_0],\
l'|_{\cali}\prec l'_0|_{\cali}, \ b_{1,u_k}\leq d_{1,u_k} \ \mbox{ for $k<j$ },\ \ b_{1,u_j}>d_{1,u_j}\}.$$
Consider the restriction function
$\pi_j:{\rm Supp}_j\to L(\calt_2)\otimes \Q$, $l'\mapsto
l'|_{\calv_2}$. Then the sum
$\sum z^\calt(l') $ over any of the fiber of $\pi_j$ is zero.
Indeed, if we write $Z(\bt_{\calv_2})$ as $Z_2(\bt_{\calv_2})\cdot \prod_{u\in \calb}
Z_{1,u}(\bt_{\calv_2})$, as in the proof of Lemma \ref{lem:zetadeg},
then $Z_{1,u_j}(\bt_{\calv_2})$
 collects the contribution from
$\calt_{1,u_j}$ (as in the proof of \ref{lem:zetadeg}). Then in the fiber of $\pi_j$
the coefficient  $b_{1,u_j}$ is larger
than the  $E^*_{u_j}|_{\calv_2}$--degree of $Z_{1,u_j}(\bt_{\calv_2})$, and
by Lemma \ref{lem:zetadeg}(b)  $l'|_{\calv_2}$  is  not in the support
of $Z_h(\bt_{\calv_2})$. This means that the sum of the corresponding coefficients is zero.
In particular,
 the corresponding sum over all ${\rm Supp}_j$ is zero for any $j$.

Hence, up to these zero sums in the `modified counting function',
we can consider only the   cycles $l'$ from $ \{  {\rm Supp}(Z_h)\,:\,
(l'-l'_0)|_{\cali} \prec 0\}\setminus \cup_j {\rm Supp}_j $.
But by the above discussion such a cycle satisfies $m_u<0$ for all $u\in \calb$, hence
$m_u<0$ for all $u\in\overline{\cali}$ (by the `easy case').
Hence $q^\calt_{h,\cali} (l'_0)=q^\calt_{h,\overline{\cali}} (l'_0)$.
\end{proof}

\section{A surgery formula for modified counting functions}\label{s:SurgMCF}

\subsection{}\label{ss:setupMCF}
Choose some  $v\in\calv$, and
  let $\calt\setminus v=\cup_{k} \calt_{v,k}$ be the connected components of $\calt\setminus v$. Let  $j^*_{v,k}:L'(\calt)\to L'(\calt_{v,k})$
   be the dual operator defined similarly as $j^*_i$ above.
\begin{lem}\label{lem:EL} Fix one of the components, say $\calt_{v,k'}$, and
let $\calj\subset \calv(\calt_{v,k'})$, $\calj\not=\emptyset$. Then
for any $l_0'\in \sum_v(\delta_v-2)E^*_v+\calS'$ one has
\begin{equation}\label{eq:proof8}
q^\calt_{h,\calj}\,(l'_0)-q^\calt_{h,\calj\cup v}\,(l'_0)=
q^{\calt_{v,k'}}_{[j^*_{v,k'}(l'_0)],\calj}\,(j^*_{v,k'}(l'_0)).
\end{equation}
\end{lem}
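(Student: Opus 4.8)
I would prove \eqref{eq:proof8} by comparing the two summation index sets on the left-hand side and showing that the difference of the two sums is precisely the sum defining the right-hand side. Recall that
\[
q^\calt_{h,\calj}(l'_0)=\sum_{l'|_\calj\prec l'_0|_\calj,\ [l']=h} z^\calt(l'),
\qquad
q^\calt_{h,\calj\cup v}(l'_0)=\sum_{l'|_{\calj\cup v}\prec l'_0|_{\calj\cup v},\ [l']=h} z^\calt(l'),
\]
so the difference $q^\calt_{h,\calj}(l'_0)-q^\calt_{h,\calj\cup v}(l'_0)$ is the sum of $z^\calt(l')$ over those $l'$ (with $[l']=h$) satisfying $l'|_\calj\prec l'_0|_\calj$ but $l'|_v\geq l'_0|_v$. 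The key structural input is the factorization of the rational function $f(\bt)$ associated with cutting at $v$: writing $\calt\setminus v=\cup_k\calt_{v,k}$, each $E^*_w(\calt)$ for $w\in\calv(\calt_{v,k})$ restricts (after the appropriate substitution, as in \eqref{eq:staridentity} and the discussion of \ref{ss:terminology}) in a way that relates $Z^\calt$ to the product of the $Z^{\calt_{v,k}}$'s twisted by the variable $t_v$.

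\textbf{Main steps.} First I would use the hypothesis $l'_0\in\sum_w(\delta_w-2)E^*_w+\calS'$, exactly as in Proposition~\ref{prop:LSz}, to control the $E^*_v$-coefficient: by \eqref{eq:cv}, for any $l'\in\mathrm{Supp}(Z_h)$ the coefficient $c_v=b_v-a_v$ of $l'-l'_0$ at $v$ satisfies $c_v\leq 0$ whenever $\delta_v>1$, so in fact the set of $l'$ with $l'|_v\geq l'_0|_v$ that contribute nontrivially is governed by the "boundary" behavior at $v$. Second, I would peel off the component $\calt_{v,k'}$ containing $\calj$: since $\calj\subset\calv(\calt_{v,k'})$, the condition $l'|_\calj\prec l'_0|_\calj$ only constrains the $\calt_{v,k'}$-part of $l'$, and the monodromy/A'Campo computation (as used in Lemma~\ref{lem:zetadeg}) shows that summing $z^\calt(l')$ over all choices of the coefficients in the other components $\calt_{v,k}$ ($k\neq k'$) and over the choice of $l'|_v$ produces a telescoping/collapsing that reduces the count to the reduced series of $\calt_{v,k'}$ evaluated at $j^*_{v,k'}(l'_0)$. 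Concretely, the contribution of each $Z_{1,u}$-type factor for the components away from $\calt_{v,k'}$, once we fix the constraint only on $\calj$, sums to the value of a finite polynomial ($\zeta$-function) at $\bt=1$ minus boundary terms, and these cancel against the "$l'|_v\geq l'_0|_v$" truncation exactly as in the one-vertex surgery formula of \cite{BN}. Third, I would identify what remains as $q^{\calt_{v,k'}}_{[j^*_{v,k'}(l'_0)],\calj}(j^*_{v,k'}(l'_0))$, using that $j^*_{v,k'}$ is the homological shadow of the restriction operator described in \ref{ss:terminology}, and that $z^{\calt_{v,k'}}(l'|_{\calv(\calt_{v,k'})})$ is obtained from $z^\calt$ by exactly this summation over the other coordinates.

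\textbf{The main obstacle.} The delicate point is the bookkeeping of the summation over $l'|_v$ (the coefficient at the cut vertex $v$) together with the coordinates in the components $\calt_{v,k}$ with $k\neq k'$: one must show that this (a priori infinite) partial sum is finite and collapses cleanly, and that the inequality $l'|_v\geq l'_0|_v$ combined with $l'|_\calj\prec l'_0|_\calj$ isolates precisely the reduced series of $\calt_{v,k'}$ with no leftover correction. This is essentially a localized version of the argument behind Theorem~\ref{th:BNsurg}, but applied "inside" a modified counting function rather than a full counting function; the inequality $l'|_\calj\prec l'_0|_\calj$ plays the role that makes the restriction to $\calt_{v,k'}$ legitimate, and the constraint $l'_0\in\sum(\delta_w-2)E^*_w+\calS'$ (hence \eqref{eq:cv}) is what guarantees the vanishing of the unwanted boundary contributions at $v$. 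I would expect the proof to be short once the restriction operator $j^*_{v,k'}$ and the factorization of $Z^\calt$ at $v$ are set up, with the product structure of $f(\bt)$ doing most of the work, much in the spirit of Lemma~\ref{lem:zetadeg}.
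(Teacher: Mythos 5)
There is a genuine gap at the heart of your argument. Your first step (writing the left-hand side as the sum of $z^\calt(l')$ over $l'|_\calj\prec l'_0|_\calj$, $l'|_v\geq l'_0|_v$) is fine, but the crucial second and third steps rest on the assertion that ``$z^{\calt_{v,k'}}(l'|_{\calv(\calt_{v,k'})})$ is obtained from $z^\calt$ by exactly this summation over the other coordinates.'' This is not true, and in fact the discrepancy between these two quantities is essentially the content of the lemma you are trying to prove. Summing $z^\calt(l')$ over fibers of the restriction to $\calv(\calt_{v,k'})$ produces the coefficients of the \emph{reduced} series $Z^\calt(\bt_{\calv(\calt_{v,k'})})$, whose exponents are the restrictions $E^*_u(\calt)|_{\calv(\calt_{v,k'})}$; by Lemma \ref{lem:modform} these are governed by a \emph{modified} intersection form, not by the intersection form of the subgraph, so they differ from the dual cycles $E^*_u(\calt_{v,k'})$ entering $Z^{\calt_{v,k'}}$. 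Concretely, the vertices of $\calt_{v,k'}$ adjacent to $v$ lose one unit of valency when $v$ is deleted, so $Z^{\calt_{v,k'}}$ acquires an extra factor of type $(1-x_w)^{-1}$ that is simply not present in any restriction of the product defining $Z^\calt$; your sketch never accounts for this, and the appeal to Lemma \ref{lem:zetadeg} and ``telescoping'' of the $Z_{1,u}$ factors does not supply the missing bookkeeping (Lemma \ref{lem:zetadeg} controls supports and degrees of the reduced series of $\calt$, it does not convert them into coefficients of the subgraph's own series).

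For comparison, the paper resolves precisely this point by a three-step reduction rather than a one-shot collapse: first the case where $v$ is an end-vertex whose neighbour $w$ has $\delta_w=2$, where in the variables $x_u=\bt^{E^*_u}$ one has $Z^\calt=Z_0\cdot(1-x_v)^{-1}$ and $Z^{\calt\setminus v}=Z_0\cdot(1-x_w)^{-1}$, and an explicit injection $\Phi(l')=j^*(l')+m_vE^*_w$ (with $m_v$ recovered from (\ref{eq:mv})) matches the two supports, the complement of its image contributing zero because there $b_v<0$; second, the case $\delta_w\geq 3$ is reduced to the first by blowing up the edge $(v,w)$ and checking stability of the modified counting functions under blow-up; third, the general case is done by induction on $\sum_{k\neq k'}|\calv(\calt_{v,k})|$, peeling off end-vertices $e$ in the other components and using Proposition \ref{prop:LSz} (via $v\in\overline{\calj\cup e}$) to cancel the cross-component terms. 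Your intuition that (\ref{eq:cv}) and the hypothesis $l'_0\in\sum_w(\delta_w-2)E^*_w+\calS'$ control the boundary behaviour at $v$ is in the right spirit (it is exactly how Proposition \ref{prop:LSz} is used in the induction), but without a mechanism replacing the bijection $\Phi$ and the induction, the identification of the leftover sum with $q^{\calt_{v,k'}}_{[j^*_{v,k'}(l'_0)],\calj}(j^*_{v,k'}(l'_0))$ is assumed rather than proven.
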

\begin{proof} We will prove Lemma \ref{lem:EL} in three steps.

\bekezdes\label{ss:delta2}
First we assume that $v$ is an end--vertex of $\calt$, and the adjacent vertex $w$ has $\delta_w=2$. Partly,
we will follow the strategy of the proof of Proposition 3.2.4 of \cite{NJEMS}.
We will write $j^*$ for the dual operator, and we will
use the notations of the proof of Proposition  \ref{prop:LSz}:
$l'=\sum_ub_uE^*_u$, $l'_0=\sum_ua_uE^*_u$, $l=l'-l'_0\in L$, $l=\sum_uc_uE^*_u=\sum_um_uE_u$. Define also $Supp(\calt)=
\{l'\,:\, l|_{\calj}\prec 0,\ l|_v\geq 0\}\cap Supp(Z^\calt(\bt))$,
$Supp(\calt\setminus v)=
\{\tilde{l}'\,:\, \tilde{l}'|_{\calj}\prec j^*(l'_0)|_{\calj},\
[\tilde{l}']=[ j^*(l'_0)]\}\cap Supp(Z^{\calt\setminus v}
(\bt_{\calv\setminus v}))$.
In the left (resp. right) hand side of
(\ref{eq:proof8}) we sum over $Supp(\calt)$ (resp.
$Supp(\calt\setminus v)$).

In order to identify the coefficients of  $Z^\calt(\bt)$
and $Z^{\calt\setminus v}(\bt_{\calv\setminus v})$
easier it
is convenient to make the change of variables $x_u:=\bt^{E^*_u}$ ($u\in\calv$),
hence $Z^\calt(\bt)$ becomes $Z^\calt(\bx)=\prod_u(1-x_u )^{\delta_u-2}$.
In particular,
\begin{equation}\label{eq:substit}
Z^\calt(\bx)=Z_0(\bx_0)\cdot (1-x_v)^{-1}, \ \ \mbox{and} \ \ \
Z^{\calt\setminus v}(\bx)=Z_0(\bx_0)\cdot (1-x_w)^{-1},
\end{equation}
where $Z_0(\bx_0)$ is a series in variable $\{x_u\}_{u\in\calv\setminus \{v,w\}}$
only.

Since $\delta_w=2$, if $l'\in Supp(Z^\calt(\bt))$ then  $b_w=0$.

If we apply $j^*$ to the identity $l'-l'_0=l$ we get
$$j^*(l')-j^*(l'_0)=-m_vE^*_w+\sum_{u\in\calv\setminus v}m_uE_u. $$
Hence, $\Phi:Supp(\calt)\to Supp(\calt\setminus v)$, $l'\mapsto j^*(l')+m_vE^*_w$
is well--defined. Write
 $\bar{l}'=\sum_{u\not=w,v}b_uE^*_u$, $\bar{l}'_0=\sum_{u\not=w,v}a_uE^*_u$.
 Then   $l'=\bar{l}'+b_vE^*_v$ and
 $\Phi(l')=\bar{l}'+m_vE^*_w$.  Since $m_v=-(l,E^*_v)$ one has
 \begin{equation}\label{eq:mv}
 m_v=-(\bar{l}'-\bar{l}'_0,E^*_v)+a_w(E^*_w,E^*_v)-(b_v-a_v)(E^*_v,E^*_v).
 \end{equation}
 Hence $\Phi$ (that is, $(\bar{l}',b_v)\mapsto (\bar{l}',m_v)$) is injective.
Furthermore, for any $(\bar{l}',m_v)\in Supp(\calt\setminus v)$ the equation (\ref{eq:mv}) provides a unique well--defined candidate for $b_v$, such that $(\bar{l}',b_v)$ satisfies all the requirement of the elements of $Supp(\calt)$
except maybe $b_v\geq 0$ (cf. (\ref{eq:substit})).
Define $Supp(\calt\setminus v)^{\geq 0}$ as a subset of
$Supp(\calt\setminus v)$ consisting of those elements $(\bar{l}',m_v)$ for which
$b_v $ computed via (\ref{eq:mv}) is $\geq 0$; that is,
 $Supp(\calt\setminus v)^{\geq 0}={\rm im}(\Phi)$.
Then, using the bijection $\Phi$ onto its image and (\ref{eq:substit})
$$\sum _{l'\in Supp(\calt)}\ z^\calt
(l')\ =\
\sum _{\tilde{l}'\in Supp(\calt\setminus v)^{\geq 0}}
\ z^{\calt\setminus v}
(\tilde{l}').
$$
Set $Supp(\calt\setminus v)^{< 0}:=Supp(\calt\setminus v)\setminus
Supp(\calt\setminus v)^{\geq 0}$. In order to finish the proof of
(\ref{eq:proof8}), we need
\begin{equation}\label{eq:proof9}
\sum _{\tilde{l}'\in Supp(\calt\setminus v)^{<0}}
\ z^{\calt\setminus v}
(\tilde{l}')=0.
\end{equation}
But under the above correspondence and (\ref{eq:substit}), this equals
$\sum z^\calt(l')$, where the sum is over
$\{l'\,:\, l|_\calj\prec 0, \ l|_v\geq 0,\ [l']=[l'_0],\ b_v<0\}$.
The condition $b_v<0$ guarantees that these elements do not belong to the support of $Z^\calt$, hence  $\sum z^\calt(l')=0$.

\bekezdes \label{ss:delta3}
 Next, we assume that $v$ is an end--vertex of $\calt$, and the adjacent vertex $w$ has  $\delta_w\geq 3$.
Then we reduce this case to the previous case \ref{ss:delta2}: first we blow up  the edge connecting $v$ and $w$, then we apply  \ref{ss:delta2}, then we blow down the newly created vertex. We have to verify that the modified counting functions
are stable with respect to these operations.

When we blow up the edge $(v,w)$, then we create a new graph, denoted by
$\overline{\calt}$ with a newly created base element $\bar{E}_{new}\in L(\overline{\calt})$.
There is a natural projection $\rho: L(\overline{\calt})\to L(\calt)$, and
$\rho^*:L'(\calt)\to L'(\overline{\calt})$, which satisfy the
projection formula $(\rho^*(l'),\bar{l})=(l',\rho(\bar{l}))$. In particular,
$\rho^*(E^*_u)=\bar{E}^*_u$ (with natural notations).
Hence if we denote by  $\overline{\calj}\subset \calv(\overline{\calt})$
the same index set as  $\calj\subset \calv(\calt)$, then
\begin{equation}\label{eq:blowup}
z^\calt_{[l'_0]}(l')=z^{\overline{\calt}}_{[\rho^*(l'_0)]}(\rho^*(l')) \ \ \
\mbox{and} \ \ \
q^\calt_{[l'_0],\calj}(l'_0)=q^{\overline{\calt}}_{[\rho^*(l'_0)],\overline{\calj}}
(\rho^*(l'_0)).\end{equation}
Next, $\overline{\calt}\setminus v$ is the graph obtained from $\calt\setminus v$
by blowing up the vertex $w$. By definition, $Z^{\overline{\calt}\setminus v}$
has the shape $\rho^*(Z^{\calt\setminus v}(\bt))\cdot (1-\bt^{\bar{E}^*_w})/
(1-\bt^{\bar{E}^*_{new}})$, where $\rho^*(\sum z(l')\bt^{l'})=\sum z(l')\bt^{\rho^*(l')}$. Note that $\bar{E}^*_{new}=\bar{E}^*_w+\bar{E}_{new}$,
hence $\bt^{\bar{E}^*_{new}}=\bt^{\bar{E}^*_w} \cdot t_{new}$. Therefore, when we restrict to the $\overline{\calj}$ variables and  we substitute $t_{new}=1$,
the term $(1-\bt^{\bar{E}^*_w})/(1-\bt^{\bar{E}^*_{new}})$
becomes 1. Hence, the coefficients of the reduced series associated with
$\calt\setminus v$ and $\overline{\calt}\setminus v$ can be compared as in the previous case (\ref{eq:blowup}).

\bekezdes\label{ss:generalcase}
Now we consider the general situation. We prove the (\ref{eq:proof8}) by induction over $\sum_{k\not=k'}|\calv(\calt_{v,k})|$.
If this sum is zero, then we apply case \ref{ss:delta3}.
Consider the general situation, and let $e\in \cup_{k\not=k'}\calv(\calt_{v,k})$
be an end vertex of $\calt$.
Then, by cases \ref{ss:delta2}--\ref{ss:delta3}
\begin{equation}
q^\calt_{[l'_0],\calj}\,(l'_0)-q^\calt_{[l'_0],\calj\cup e}\,(l'_0)=
q^{\calt\setminus e}_{[j^*(l'_0)],\calj}\,(j^*(l'_0)),
\end{equation}
\begin{equation}
q^\calt_{[l'_0],\calj\cup v}\,(l'_0)-q^\calt_{[l'_0],\calj\cup v\cup  e}\,(l'_0)=
q^{\calt\setminus e}_{[j^*(l'_0)],\calj\cup v}\,(j^*(l'_0)).
\end{equation}
Since $q^\calt_{[l'_0],\calj\cup e}\,(l'_0)=q^\calt_{[l'_0],\calj\cup v\cup  e}\,(l'_0)$ by Proposition \ref{prop:LSz}, and
\begin{equation}
q^{\calt\setminus e}_{[j^*(l'_0)],\calj}\,(j^*(l'_0))=
q^{\calt\setminus e}_{[j^*(l'_0)],\calj\cup v}\,(j^*(l'_0))
+q^{\calt_{v,k'}}_{[j^*_{v,k'}(l'_0)],\calj}\,(j^*_{v,k'}(l'_0))
\end{equation}
by the inductive step, the identity (\ref{eq:proof8}) follows.
This ends the proof of Lemma \ref{lem:EL} as well.
\end{proof}
\section{The proof of Theorem \ref{prop:1}}\label{sec:MCF}

\subsection{}\label{ss:4.3}
 We will prove the identity (\ref{eq:proof1}) by induction on the cardinality
$|\cali|$ of $\cali$.

\bekezdes \label{ss:Iegy}
Assume that   $\cali$ contains exactly one element, say $v$.
We will use the notations of \ref{ss:setupMCF}, $\calt\setminus v=\cup_k \calt_{v,k}$. We have to prove
\begin{equation}\label{eq:proof11}
Q^\calt_{[l'_0]}\,(l'_0)-Q^\calt_{[l'_0],v}\,(l'_0)=\ \sum_k \
Q^{\calt_{v,k}}_{[j^*_{v,k}(l'_0)]}(j^*_{v,k}(l'_0)).
\end{equation}
We rewrite this identity in terms of modified counting functions (as in \ref{ss:4.4}).
For the last sum we have to consider
nonempty subsets $\calj\subset \calv\setminus v$. Hence, it is natural to organize the  nonempty subsets of $\calv $ as $\{v\}\cup \{\calj,\calj\cup v\}_{\calj\subset \calv\setminus v,\ \calj\not=\emptyset}$. The modified counting function associated with $\{v\}$  cancels with the second term
$Q^\calt_{[l'_0],v}\,(l'_0)$  of (\ref{eq:proof11}).
Hence the left hand side of (\ref{eq:proof11}) becomes an alternating sum of expressions of type
$q^\calt_{[l'_0],\calj}\,(l'_0)-q^\calt_{[l'_0],\calj\cup v}\,(l'_0)$.
If $\calj$ is not contained totally in only one $\calv(\calt_{v,k})$
then $v\in \overline {\calj}$, hence this expression  is zero
by Proposition \ref{prop:LSz}. Therefore we can assume that there exists
$k$ such that $\calj\subset \calv(\calt_{v,k})$. Hence
the expression (\ref{eq:proof11}) decomposes as a sum over $k$ according to this inclusion. Then the needed identity is the subject of   Lemma \ref{lem:EL}.

\bekezdes
Next, we take $\cali$ with  $|\cali|\geq 2$, and we assume, by the inductive step, that the identity
 $(\ref{eq:proof1})$ is true for any graph $\calt'$, any $h'\in H(\calt')$, and
  any subset $\cali'\subset
  \calv(\calt')$ with  $|\cali'|<|\cali|$.

Recall that $\calt\setminus \cali=\cup_i \calt_i$, and we wish to prove
\begin{equation}\label{eq:proof21}
Q^\calt_{[l'_0]}\,(l'_0)=Q^\calt_{[l'_0],\cali}\,(l'_0)+\sum_i \
Q^{\calt_i}_{[j^*_i(l'_0)]}(j^*_i(l'_0)).
\end{equation}
We choose some  $v\in\cali$, and we apply the inductive step for $\calt\setminus v$ and $\cali\setminus v$.
  Let $\calt\setminus v=\cup_{k} \calt_{v,k}$ be the connected components of $\calt\setminus v$, and let $j^*_{v,k}$ 
the corresponding dual operators. 
  Note that if $\calt_i$ is contained in $\calt_{v,k}$ then
   $j^*_{\calt_i\subset \calt_{v,k}}\circ j^*_{v,k}=j^*_i$.
 In particular, we get the following identity
 \begin{equation}\label{eq:proof22}
\sum_k\ Q^{\calt_{v,k}}_{[ j^*_{v,k}(l'_0)]}(j^*_{v,k}(l'_0))=
\sum_k\ Q^{\calt_{v,k}}_{[ j^*_{v,k}(l'_0)],\cali\setminus v}(j^*_{v,k}(l'_0))+
\sum_i \
Q^{\calt_i}_{[j^*_i(l'_0)]}(j^*_i(l'_0)).
\end{equation}
 By induction this identity is valid for any $\tilde{l}'_0$ (instead of
$j^*_{\calt\setminus v\subset \calt} (l'_0)
=\oplus_k j^*_{v,k} (l'_0)$) from the lattice
of $\calt\setminus v$ (satisfying the required assumptions that its $E^*$--coefficients are sufficiently high).
Hence it is true also for $\tilde{l}'_0=\oplus_k j^*_{v,k}(l'_0)$,
and this identity,  in this way,  will be
considered as a quasipolynomial identity in variable
$l'_0\in L'$ (cf. discussion from  \ref{ss:discussion}).
The difference between (\ref{eq:proof21}) and (\ref{eq:proof22}) is
\begin{equation}\label{eq:proof23}
Q^\calt_{[l'_0]}\,(l'_0)-\sum_k\ Q^{\calt_{v,k}}_{[ j^*_{v,k}(l'_0)]}(j^*_{v,k}(l'_0))=
Q^\calt_{[l'_0],\cali}\,(l'_0)-
\sum_k\ Q^{\calt_{v,k}}_{[ j^*_{v,k}(l'_0)],\cali\setminus v}(j^*_{v,k}(l'_0)).
\end{equation}
This identity (via induction) is equivalent with (\ref{eq:proof1}).
But, for the left hand side of (\ref{eq:proof23}) one can apply (the already proved)  (\ref{eq:proof11}).
%
In particular, (\ref{eq:proof1}) is equivalent with
 \begin{equation}\label{eq:proof5}
Q^\calt_{[l'_0],\cali}\,(l'_0)-Q^\calt_{[l'_0],v}(l'_0)=
\sum_k\ Q^{\calt_{v,k}}_{[ j^*_{v,k}(l'_0)],\cali\setminus v}(j^*_{v,k}(l'_0)).
\end{equation}
Next, we rewrite  the identity (\ref{eq:proof5})
 in terms of modified counting functions by the same principle as in
 \ref{ss:Iegy}.
For the last sum we have to consider
nonempty subsets $\calj\subset \cali\setminus v$, and  we
 organize the  nonempty subsets of $\cali$ as $\{v\}\cup \{\calj,\calj\cup v\}_{\calj\subset \cali\setminus v,\ \calj\not=\emptyset}$. The modified counting function associated with $\{v\}$  cancels with the second term of
 (\ref{eq:proof5}).
Hence the left hand side of (\ref{eq:proof5}) is
again a combination of expressions  of type
$q^\calt_{h,\calj}(l'_0)-q^\calt_{h,\calj\cup v}(l'_0)$.
If  $\calj$ is not contained totally in only one $\calv(\calt_{v,k})$
then $v\in \overline {\calj}$, hence this expression  is zero
by Proposition \ref{prop:LSz}. Hence we can assume that there exists
$k$ such that $\calj\subset \calv(\calt_{v,k})$ and
the expression (\ref{eq:proof5}) also decomposes as a sum over $k$ according to this inclusion, and it becomes  the statement of  Lemma \ref{lem:EL}.


\section{The proof of the vanishing from Example \ref{ex:partcases}(3)}\label{s:ExProof}

\subsection{Normal surface singularities} (For more details see \cite{trieste,NCL,Nfive,LPhd}).
Assume that $(X,o)$ is a complex analytic normal surface singularity, and let $\phi:\widetilde{X}\to X$ be a good resolution of $(X,o)$.
We denote the exceptional curve $\phi^{-1}(0)$ by $E$, and let $\cup_vE_v$ be
its irreducible components. Let $\calt$ be the dual resolution graph
associated with $\phi$ (which is automatically connected and negative definite).
Then $\widetilde{X}$, as a smooth manifold,
serves as the plumbing  4--manifold associated with $\calt$,
and $M=\partial \widetilde{X}$ is the plumbed
3--manifold (and also the `link' of $(X,o)$).
A resolution is minimal if there is no rational $E_v$ with $E_v^2=-1$.
We will assume, similarly as above, that  $M$ is a rational homology sphere, and we will use the notations from the previous sections.

The group of  analytic line bundles on $\widetilde{X}$
(up to isomorphism),
${\rm Pic}(\widetilde{X})$, appears in the exact sequence
\begin{equation}\label{eq:PIC}
0\to {\rm Pic}^0(\widetilde{X})\to {\rm Pic}(\widetilde{X})\stackrel{c_1}
{\longrightarrow} L'\to 0, \end{equation}
where  $c_1$ denotes the first Chern class of a line bundle. Furthermore,
$ {\rm Pic}^0(\widetilde{X})=H^1(\widetilde{X},\calO_{\widetilde{X}})\simeq
\C^{p_g}$, where $p_g$ is the {\it geometric genus} of
$(X,o)$. $(X,o)$ is called {\it rational} if $p_g(X,o)=0$.
 Artin in \cite{Artin62,Artin66} characterised rationality topologically
via their graphs. Such graphs are called `rational'.

The homomorphism
$c_1$ admits a unique (group homomorphism) section $l'\mapsto \calO(l')\in {\rm Pic}(\widetilde{X})$, such that $c_1(\calO(l'))=l'$, which extends the natural
section $l\mapsto \calO_{\widetilde{X}}(l)$ valid for integral cycles $l\in L$.

We say that for a  singularity $(X,o)$ and resolution $\phi$
the Seiberg--Witten Invariant Conjecture
(SWIC) is valid (cf. \cite{NN1,NPS})  if for any  $l'_0\in L'$ one has
\begin{equation}\label{eq:SWIC}
Q^{\calt} _{[l'_0]}(l'_0)+\mathfrak{sw}_{[-l_0']*\sigma_{can}}
(M(\calt))+
\frac{(K + 2l_0')^2+
 |\calv|}{8}= -h^1(\widetilde{X},\calO(-l_0')).
 \end{equation}
For rational singularities (and for any resolution of them) the SWIC  is valid,
cf. \cite{trieste,NCL,BN}.

The identity (\ref{eq:SWIC}) connects topological invariants of $(X,o)$ (left hand side) with analytic sheaf--cohomology invariants. There are two special regions
 for $l'_0$ when it simplifies. When $l'_0\in -K+\calS'$
  then by Generalized
 Grauert--Riemenschneider vanishing  theorem $h^1(\widetilde{X},\calO(-l'))=0$.
 Hence (\ref{eq:SWIC}) identifies the counting function with the
 normalized  Seiberg--Witten invariant (as in Theorem \ref{th:JEMS}).

However, when $\calS'\cap \{l'\,:\, l'\not\geq l'_0\}=\emptyset$, then
$ Q^{\calt} _{[l'_0]}(l'_0)=0$ and the rank of
 the corresponding sheaf--cohomology is  identified with the
 normalized  Seiberg--Witten invariant.
 If  both conditions are satisfied simultaneously
 then we obtain the vanishing of the normalized Seiberg--Witten invariants.

\bekezdes{\bf Cyclic quotient singularities \cite{BPV}.}\label{bek:CQS}
 Recall that $(X,o)$ is called a
cyclic quotient singularity if one of the following (equivalent) facts hold:

(1) $(X,o)$ is the quotient of $(\bC^2,0)$ by a cyclic group;

(2) the  graph $\calt$ of the minimal resolution is a string;

(3) there exists a finite map $p:(X,o)\to (\bC^2,0)$, whose (reduced)
discriminant (ramification locus)  is included in the union of the two local
coordinate axes of $(\bC^2,0)$.

Assume that $(X,o)$ is a cyclic quotient singularity, and $\phi$ is its minimal resolution. Let $C_1$ and $C_2$ be two cuts of the end--vertices.
Then there exists a finite projection $p:(X,o)\to (\bC^2,0)$ such that the discriminant of $p$ is included in
$\cup_{i=1,2} p(\phi(C_i))$, and  $ \{p(\phi(C_i))\}_{i=1,2}$ might serve as local coordinate axes of $(\bC^2,0)$.

Cyclic quotient singularities are rational.

\subsection{The proof of the vanishing from Example \ref{ex:partcases}(3).}
\label{s:PVan}
We start with a normal surface singularity $(X,o)$, a fixed resolution
 $\phi$ with dual graph $\calt$. We fix $h\in H$ and $r_h\in L'$ as above.
 We write $\calt\setminus \caln=\cup_i\calt_i$.
 Then each $\calt_i$ is a connected string, hence by contraction of the corresponding exceptional divisors indexed by $\calt_i$ we obtain a cyclic quotient singularity $(X_i,0)$. For this singularity
 the SWIC is valid. This,
 for the line bundle $\calO(-j^*_i(r_h))$,  reads as
\begin{equation*}
Q^{\calt_i} _{[j^*_i(r_h)]}(j^*_i(r_h))+\mathfrak{sw}_{[-j^*_i(r_h)]*\sigma_{can}}
(M(\calt_i))+
\frac{(K(\calt_i) + 2j^*_i(r_h))^2+
 |\calv(\calt_i)|}{8}= -h^1(\widetilde{X}(\calt_i),\calO(-j^*_i(r_h))).
 \end{equation*}
Write $\calv_i=\calv(\calt_i)$. Let $\{E_v\}_{v\in\calv_i}$ be the exceptional curves indexed  by $\calt_i$, and let $\partial\calv_i$ be those nodes
of $\calt$ which are adjacent with $\calv_i$ (this set
contains one or two elements).
If $n\in \partial \calv_i$ then let $w(n)\in\calv_i$
adjacent with $n$. Then, if $r_h=\sum_{v\in\calv} l'_vE_v$ then
$j^*_i(r_h)=\sum_{v\in \calv_i} l'_vE_v-\sum _{n\in \partial \calv_i} l'_n
E^*_{w(n)}$. Since each $l'_v\in[0,1)$, the cycle $j^*_i(r_h)$ has the form
$r_{h_i}-l$, where $r_{h_i}$ is in the
semi--open cube of $L'(\calt_i)$ and $l\in L(\calt_i)$, $l\geq 0$. In particular,
$\{l'\in L'(\calt_i)\,:\, l'\not\geq j^*_i(r_h)\}\cap \calS'(\calt_i)$ is empty
and $Q^{\calt_i}_{[j^*_i(r_h)]} (j^*_i(r_h))=0$.

Hence, the needed vanishing is equivalent with $h^1(\widetilde{X}(\calt_i),
\calO(-j^*_i(r_h)))=0$. Usually, by `standard'
vanishing theorems, see e.g. \cite[Th.12.1]{Lipman},
if in the resolution  of a rational singularity $l'\in \calS'$ then
$h^1 (\calO(-l'))=0$.
However, in this case $j^*_i(r_h)\in\calS'(\calt_i)$
is not necessarily true ($j^*_i(r_h)$ might have even negative
$E$--coefficients), see Example \ref{ex:graph1},
hence we need another (deeper) argument.

The proof relies on the structure of the
{\it universal abelian covering} (UAC) of
$(X,o)$. Since $H=H_1(M,\bZ)$ is finite, the abelianization $\pi_1(M)\to H$
determines a regular covering of $M_a\to M$, and
 a normal surface singularity $(X_a,o)$ with link $M_a$, and a finite analytic covering $c:(X_a,o)\to (X,o)$
with ramification locus only at $o\in X$. It is called the UAC of $(X,o)$ (see e.g. \cite{trieste,NWsq,OUAC} and the
references therein).

If $\phi:\widetilde{X}\to X$ is a good resolution of $(X,o)$ then let
$c':Z\to \widetilde{X}$ be the normalized pullback of
$c$ via $\phi$.
The (reduced) branch locus of $c'$ is included
in $\phi^{-1}(o)=E$, and the Galois action of $H$ extends to $Z$ as well.
Since $E$ is a normal
crossing divisor, the only singularities what $Z$ might have are
cyclic quotient singularities. Let
$\psi:\widetilde{X}_a\to Z$ be  a resolution of these singular points
such that $(c'\circ \psi)^{-1}(E)$ is a normal crossing divisor. Set $\widetilde{c}:=c'\circ \psi$.
\begin{equation}\label{eq:diagramUAC}
\begin{array}{ccccc}
\widetilde{X}_a & \stackrel{\psi}
{\longrightarrow} &Z& \longrightarrow & (X_a,o) \\
\Big\downarrow \vcenter{%
\rlap{$\scriptstyle{\widetilde{c}}$}}
 & &
\Big\downarrow \vcenter{%
\rlap{$\scriptstyle{c'}$}} & &
\Big\downarrow \vcenter{%
\rlap{$\scriptstyle{c}$}} \\
\widetilde{X} & = &\widetilde{X} & \stackrel{\phi}{\longrightarrow}  & (X,o)
\end{array} \end{equation}

The point is that the cycles $r_h\in L'$ and the line bundles $\calO(-r_h)\in
{\rm Pic}(\widetilde{X})$ appear in a natural way via the UAC as follows:
$\widetilde{c}_* \calO_{\widetilde{X}_a}$ has an $H$--eigenspace decomposition \cite{trieste,NPS,NCL,OUAC}
\begin{equation}\label{eq:RHDEC}
\widetilde{c}_* \calO_{\widetilde{X}_a}=\oplus_{h\in H} \, \calO(-r_h).
\end{equation}
Let $\widetilde{X}_i$ be a small neighbourhood of $\cup_{v\in\calv_i}E_v$ in $\widetilde{X}$.
It  serves as the plumbed 4--manifold $\widetilde{X}(\calt_i)$ associated with
$\calt_i$, and the restriction of $\phi$ is a minimal resolution
$\phi_i:\widetilde{X}_i\to X_i$ of the quotient singularity $(X_i,o)$.
Consider the restriction $\calO(-r_h)|_{\widetilde{X}_i}\in {\rm Pic}(\widetilde{X}_i)$. Its Chern class is $-j^*_i(r_h)\in L'(\calt_i)$. Since
$p_g(X_i,o)=0$, by the exact sequence (\ref{eq:PIC}) $\calO(-r_h)|_{\widetilde{X}_i}$ is determined by its Chern class, hence it is
$\calO(-j^*_i(r_h))$. Hence we need to prove that $h^1(\calO(-r_h)|_{\widetilde{X}_i})=0$.

Let $\widetilde{X}_{a,i}$ be $\widetilde{c}^{-1}(\widetilde{X}_i)$ in $\widetilde{X}_a$, and $\widetilde{c}_i:\widetilde{X}_{a,i}\to \widetilde{X}_i$
the restriction of $\widetilde{c}$.  Then the $H$ action preserves $\widetilde{X}_{a,i}$,
and the eigenspace decomposition (\ref{eq:RHDEC}) is compatible with the restriction, hence $\calO(-r_h)|_{\widetilde{X}_i}$ is a direct
eigenspace summand (corresponding to $h$) of $(\widetilde{c}_{i})_*\calO_{\widetilde{X}_{a,i}}$.
Hence it is enough to prove that
$h^1((\widetilde{c}_i)_*\calO_{\widetilde{X}_{a,i}})=0$.
 Since $\psi$ resolves only cyclic quotient singularities, and $c'$ is finite,
 $R^1\widetilde{c}_*\calO_{\widetilde{X}_a}=0$. Hence, by Leray spectral sequence,
 $h^1((\widetilde{c}_i)_*\calO_{\widetilde{X}_{a,i}})=
 h^1(\calO_{\widetilde{X}_{a,i}})$. Thus, we need
$h^1(\calO_{\widetilde{X}_{a,i}})=0$.

$\widetilde{X}_{a,i}$ has several (isomorphic) connected components.
 By construction, $\widetilde{c}_i$ is a regular covering off
$(\cup_{v\in\calv_i}E_v)\cup\cup_{n\in\partial \calv_i}(E_n\cap \widetilde{X}_i)$.
The disc(s) $C_n:=E_n\cap \widetilde{X}_i$ are/is cut(s) of $\calt_i$ in $\widetilde{X}_i$ at the end--vertices. By the discussion from \ref{bek:CQS}
there is a projection $p_i:(X_i,0)\to (\bC^2,0)$, such that
$p_i(\cup_n C_n)$ is included in the discriminant, which itself  is  included
in the union of the coordinate axes. This $p_i$ composed with $\widetilde{c}_i$
provides a map $\widetilde{X}_{a,i}\to (\bC^2,0)$ with discriminant included in
the union of coordinate axes.
 Hence,  each component of $\widetilde{X}_{a,i}$ is a
resolution of a cyclic quotient singularity. Since cyclic quotient singularities are rational, $h^1(\calO_{\widetilde{X}_{a,i}})=0$.

\begin{remark}\label{rem:pg}
From (\ref{eq:RHDEC}) one has  $p_g(X_a,o)=\sum_{h\in H} h^1(\widetilde{X},
\calO(-r_h))$.
\end{remark}

\begin{example}\label{ex:graph1}
Let $\calt$ be the left graph below,  and at right we show
the $E$--multiplicities of  $r_h\in L'$.

\begin{picture}(300,55)(-20,-15)
\put(20,10){\circle*{4}} \put(60,10){\circle*{4}}
\put(100,10){\circle*{4}}
\put(20,10){\line(1,0){80}}
\put(20,10){\line(-1,1){20}} \put(20,10){\line(-1,-1){20}}
\put(100,10){\line(1,1){20}}\put(100,10){\line(1,-1){20}}
\put(0,30){\circle*{4}} \put(0,-10){\circle*{4}}
\put(120,30){\circle*{4}} \put(120,-10){\circle*{4}}
\put(23,20){\makebox(0,0){$-2$}}
\put(60,20){\makebox(0,0){$-2$}}
\put(97,20){\makebox(0,0){$-2$}}
\put(-10,30){\makebox(0,0){$-4$}}
\put(-10,-10){\makebox(0,0){$-4$}}
\put(130,30){\makebox(0,0){$-4$}}
\put(130,-10){\makebox(0,0){$-4$}}

\put(220,10){\circle*{4}} \put(260,10){\circle*{4}}
\put(300,10){\circle*{4}}
\put(220,10){\line(1,0){80}}
\put(220,10){\line(-1,1){20}} \put(220,10){\line(-1,-1){20}}
\put(300,10){\line(1,1){20}}\put(300,10){\line(1,-1){20}}
\put(200,30){\circle*{4}} \put(200,-10){\circle*{4}}
\put(320,30){\circle*{4}} \put(320,-10){\circle*{4}}
\put(223,20){\makebox(0,0){$1/2$}}
\put(260,20){\makebox(0,0){$0$}}
\put(297,20){\makebox(0,0){$1/2$}}
\put(188,30){\makebox(0,0){$1/8$}}
\put(188,-10){\makebox(0,0){$7/8$}}
\put(332,30){\makebox(0,0){$1/8$}}
\put(332,-10){\makebox(0,0){$7/8$}}
\put(60,0){\makebox(0,0){ $E_0$}}
\end{picture}

Let $\calt_i$ be the subgraph consisting of the
$(-2)$ vertex $E_0$ between the two nodes. Then $j^*_i(r_h)=
-E_0^*=-E_0/2$. Hence, usually $j^*_i(r_h)$ is not even effective.
 The Chern class of $\calO(-j^*_i(r_h))$  is $(E_0/2,E_0)=-1$.
  Since $h^1(\calO(-E_0+E_0/2))=0$ by
 Grauert--Riemenschneider type vanishing,
 we get $h^1(\calO(E_0/2))=h^1(\calO_{E_0}(E_0/2))=
 h^1(\calO_{\P^1}(-1))=0$.
\end{example}

\begin{example}\label{ex:graph2}
Consider the following graph  $\calt$  and
the $E$--multiplicities of certain $r_h\in L'$.

\begin{picture}(300,55)(-100,-15)
\put(20,10){\circle*{4}}
\put(20,10){\line(-1,1){20}} \put(20,10){\line(-1,-1){20}}
\put(20,10){\line(1,1){20}}\put(20,10){\line(1,-1){20}}
\put(0,30){\circle*{4}} \put(0,-10){\circle*{4}}
\put(40,30){\circle*{4}} \put(40,-10){\circle*{4}}
\put(20,20){\makebox(0,0){$-3$}}
\put(-10,30){\makebox(0,0){$-2$}}
\put(-10,-10){\makebox(0,0){$-2$}}
\put(50,30){\makebox(0,0){$-2$}}
\put(50,-10){\makebox(0,0){$-2$}}

\put(120,10){\circle*{4}}
\put(120,10){\line(-1,1){20}} \put(120,10){\line(-1,-1){20}}
\put(120,10){\line(1,1){20}}\put(120,10){\line(1,-1){20}}
\put(100,30){\circle*{4}} \put(100,-10){\circle*{4}}
\put(140,30){\circle*{4}} \put(140,-10){\circle*{4}}
\put(120,20){\makebox(0,0){$0$}}
\put(88,30){\makebox(0,0){$1/2$}}
\put(88,-10){\makebox(0,0){$1/2$}}
\put(152,30){\makebox(0,0){$1/2$}}
\put(152,-10){\makebox(0,0){$1/2$}}
\end{picture}

Let $\cali$ be the union of the four $(-2)$--vertices, hence $\calt_1= \calt\setminus \cali$ consists of the $(-3)$--vertex $E_0$.
Then $j^*_1(r_h)= -2E_0^*=-2E_0/3$.
Its representative $r_{h_1}$ is
$E_0/3$,  hence, usually, $j^*_i(r_h)\not=r_{h_i}$.

 The Chern class of $\calO(-j^*_1(r_h))$  is $(2E_0/3,E_0)=-2$.
 Hence, 
   $h^1(\calO(2E_0/3))=h^1(\calO_{E_0}(2E_0/3))=
 h^1(\calO_{\P^1}(-2))=1$.
 Furthermore,
 $Q^{\calt_i} _{[j^*_1(r_h)]}(j^*_1(r_h))=0$, thus
 \begin{equation}\label{eq:100}
\mathfrak{sw}_{[-j^*_1(r_h)]*\sigma_{can}}
(M(\calt_1))+
\frac{(K(\calt_1) + 2j^*_1(r_h))^2+
 |\calv(\calt_1)|}{8}= -1.
 \end{equation}
 %
\end{example}

\section{Application:  $\calt_i$ are rational} \label{s:8}

\subsection{}\label{ss:8.1}
Consider  the situation of Theorem \ref{th:MainResult} and assume that all
$\calt_i$ are rational (see \cite{Nfive} or  Section
\ref{s:ExProof}). We will prove two `reduction formulae', see Propositions
\ref{prop:red1} and \ref{prop:red2}.

If $h=0$ (hence $r_h=0$ too)  then
the SWIC is valid for the corresponding singularity,  hence (\ref{eq:SWIC}) applied for $l'_0=0$ reads as
\begin{equation}\label{eq:SWICrat}
\mathfrak{sw}_{\sigma_{can}}(M(\calt_i))+
\frac{K(\calt_i)^2+  |\calv(\calt_i)|}{8}= 0.
 \end{equation}
Hence, if each $\calt_i$ is rational then
\begin{equation*}
\mathfrak{sw}_{\sigma_{can}}(M)+\frac{K^2+|\mathcal{V}|}{8}
 = - \textnormal{pc}^{\pi_\cali(\calS'_{\R})}(Z_0(\mathbf{t}_\cali)),
\end{equation*}
thus the `normalized Seiberg--Witten invariant associated with $\sigma_{can}$
can be computed as the periodic constant of the series reduced to the variables
$\bt_\cali$.

In general, for arbitrary $h$,
 the  vanishing  (\ref{eq:vanishing}) does not hold (even if
$\calt$ itself  is rational), cf. Example \ref{ex:graph2}.
However, the contribution from $\calt_i$ rational still can be simplified.

In order to state the results we need some preparation.

\subsection{}\label{ss:8.2}
 We fix a graph as in \ref{ss:PGP} and $h\in H$.
Then there exists a unique representative $s_h\in \calS'\subset L'$ of $h$,  which is the unique minimal element  (with respect to the partial ordering)
of $\{s\in\calS'\,:\, [s]=h\}$  \cite{NOSZ,trieste}.
Usually $s_h\not=r_h$. Since $s_h\geq 0$, by the definition of $r_h$
 we have $s_h-r_h=\Delta_h\in L$,
$\Delta_h\geq 0$.

Note that for $s_h$ still $Q^\calt_{h}(s_h)=0$ (since $Z$ is supported on $\calS'$, and in $\calS'$ the representative
$s_h$ is minimal in its class). Moreover, if $\calt$ is rational, then for $s_h$ applies
 Lipman's vanishing  as well \cite[Th.12.1]{Lipman}, namely
$h^1(\widetilde{X},\calO(-l'))=0$   for any $l'\in \calS'$.
Thus, the SWIC for $l'_0=s_h$ reads as
\begin{equation}\label{eq:SWIC2}
\mathfrak{sw}_{-h*\sigma_{can}}
(M(\calt))+
\frac{(K + 2s_h)^2+
 |\calv|}{8}= 0 \ \ (\mbox {whenever $\calt$ is rational}).
 \end{equation}

\begin{remark}\label{rem:lc}
Since the lattice cohomology theory is a categorification of the normalized Seiberg--Witten invariants cf. \cite{NJEMS}, the above facts can also be reinterpreted by the lattice cohomological characterization of rational
singularities (for more details see \cite[4.1]{Nlat}).
\end{remark}

\subsection{}\label{ss:8.3}
Consider the situation of Theorem \ref{th:MainResult} with $\calt_i$ rational.
Then (\ref{eq:SWIC2}) and \ref{rem:harom}(2) imply
\begin{equation}\label{eq:vanishing2}
 \mathfrak{sw}_{-[j^*_{i}(r_h)]*\sigma_{can,i}}(M_i)+\frac{(K(\mathcal{T}_i) + 2j^*_i(r_h))^2+
 |\mathcal{V}(\calt_i)|}{8}=\chi(s_{h_i})-\chi(j^*_{i}(r_h)).
\end{equation}
Hence, if all $\calt_i$ are rational then
\begin{proposition}\label{prop:red1}
 \begin{equation*}
\mathfrak{sw}_{-h*\sigma_{can}}(M)+\frac{(K+2r_h)^2+|\mathcal{V}|}{8}
 =\
 - \textnormal{pc}^{\pi_\cali(\calS'_{\R})}(Z_{h}^\calt(\mathbf{t}_\cali))
 + \ \sum_{i} \Big( \chi(s_{h_i})-\chi(j^*_{i}(r_h)) \Big).
\end{equation*}
\end{proposition}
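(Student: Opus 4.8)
The plan is to read off Proposition \ref{prop:red1} from the general surgery formula of Theorem \ref{th:MainResult} by collapsing each local term. Concretely, for rational $\calt_i$ I want to show that the $i$-th summand on the right-hand side of Theorem \ref{th:MainResult}, namely $\mathfrak{sw}_{-[j^*_{i}(r_h)]*\sigma_{can,i}}(M_i)+\frac{(K(\calt_i) + 2j^*_i(r_h))^2+ |\calv(\calt_i)|}{8}$, equals the purely arithmetic quantity $\chi(s_{h_i})-\chi(j^*_{i}(r_h))$ --- that is, I want to establish identity \eqref{eq:vanishing2}. Once this is in hand, the statement follows by substituting it termwise into Theorem \ref{th:MainResult}; all the genuine work (the analytic SWIC, and the surgery identity itself) is already available.

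To prove \eqref{eq:vanishing2} I would fix $i$, set $h_i:=[j^*_i(r_h)]\in H(\calt_i)$, and let $s_{h_i}\in\calS'(\calt_i)$ be the minimal representative of $h_i$ as in \ref{ss:8.2}. The key subtlety --- and the reason the minimal representative is forced on us --- is that $j^*_i(r_h)$ itself need not lie in $\calS'(\calt_i)$, and may even have negative $E$-coefficients (cf. Examples \ref{ex:graph1}--\ref{ex:graph2}), so Lipman's vanishing does not apply to $\calO(-j^*_i(r_h))$. Instead I apply the SWIC to $s_{h_i}$, where $Q^{\calt_i}_{h_i}(s_{h_i})=0$ (minimality in $\calS'$) and $h^1(\widetilde{X}(\calt_i),\calO(-s_{h_i}))=0$ (rationality of $\calt_i$), obtaining \eqref{eq:SWIC2} for $\calt_i$:
\[
\mathfrak{sw}_{-h_i*\sigma_{can,i}}(M_i)+\frac{(K(\calt_i)+2s_{h_i})^2+|\calv(\calt_i)|}{8}=0 .
\]
Then I rewrite both quadratic expressions via the Riemann--Roch identity of Remark \ref{rem:harom}(2), $\frac{(K(\calt_i)+2l')^2+|\calv(\calt_i)|}{8}=\frac{K(\calt_i)^2+|\calv(\calt_i)|}{8}-\chi(l')$, once at $l'=s_{h_i}$ and once at $l'=j^*_i(r_h)$; subtracting cancels the common $\frac{K(\calt_i)^2+|\calv(\calt_i)|}{8}$ and gives
\[
\frac{(K(\calt_i)+2j^*_i(r_h))^2+|\calv(\calt_i)|}{8}=\frac{(K(\calt_i)+2s_{h_i})^2+|\calv(\calt_i)|}{8}+\chi(s_{h_i})-\chi(j^*_i(r_h)).
\]
Adding $\mathfrak{sw}_{-h_i*\sigma_{can,i}}(M_i)$ and using the displayed SWIC identity yields exactly \eqref{eq:vanishing2}.

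Finally I would substitute \eqref{eq:vanishing2} for each $i$ into Theorem \ref{th:MainResult}: the sum over $i$ on its right-hand side becomes $\sum_i\big(\chi(s_{h_i})-\chi(j^*_i(r_h))\big)$, and the asserted formula drops out verbatim. I do not expect a real obstacle: the substance lies entirely in Theorem \ref{th:MainResult}, and the only thing to be careful about is the step just described --- that $l'_0=j^*_i(r_h)$ is not an admissible input to Lipman's vanishing, so one reroutes through the minimal representative $s_{h_i}$ and absorbs the discrepancy into the Riemann--Roch correction $\chi(s_{h_i})-\chi(j^*_i(r_h))$.
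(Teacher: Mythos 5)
Your proposal is correct and follows essentially the same route as the paper: the paper also derives \eqref{eq:vanishing2} by applying the SWIC at the minimal representative $s_{h_i}$ (where $Q^{\calt_i}_{h_i}(s_{h_i})=0$ and Lipman's vanishing gives $h^1=0$, yielding \eqref{eq:SWIC2} for each rational $\calt_i$), converting the quadratic terms via Remark \ref{rem:harom}(2), and then substituting termwise into Theorem \ref{th:MainResult}. Your remark that $j^*_i(r_h)$ need not lie in $\calS'(\calt_i)$, so one must reroute through $s_{h_i}$, is precisely the point the paper makes as well.
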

\subsection{} \label{ss:8.4}
Consider again
the situation of Theorem \ref{th:MainResult} with all $\calt_i$ rational.
Corollary \ref{cor:QP} applied for $l'_0=s_h=r_h+\Delta_h$ reads as
\begin{equation}\label{eq:last}
\begin{split}
\ & \ \mathfrak{sw}_{-h*\sigma_{can}}(M)+\frac{(K+2s_h)^2+|\mathcal{V}|}{8}
 = \\
\ & \sum_{i} \Big( \mathfrak{sw}_{-[j^*_{i}(s_h)]*\sigma_{can,i}}(M_i)+\frac{(K(\mathcal{T}_i) + 2j^*_i(s_h))^2+
 |\mathcal{V}(\calt_i)|}{8}\Big)
 - \mathfrak{Q}^\calt_{h,\cali}\,(\Delta_h).
 \end{split}
\end{equation}
The following fact follows directly from definitions, for details see
 \cite[Lemma 8.4.2]{LSz}.

\begin{lemma}
$j^{*}_i(s_{h}) = s_{[j^{*}_i(s_{h})]}$ in $L'_{\mathcal{T}_i}$.
\end{lemma}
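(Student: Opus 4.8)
The plan is to verify that $j^*_i(s_h)$ is itself the minimal representative, inside the Lipman cone $\calS'(\calt_i)$, of its own class in $H(\calt_i)=L'(\calt_i)/L(\calt_i)$; by uniqueness of such a minimal representative this gives $j^*_i(s_h)=s_{[j^*_i(s_h)]}$. The whole statement is lattice--theoretic, so the two ingredients I will use are: (i) membership in a Lipman cone is detected by signs, $\calS'=\{l'\in L':(l',E_v)\leq 0\ \forall v\}$ (immediate from $(E^*_v,E_w)=-\delta_{vw}$, noting every $l'\in L'$ already lies in $\Z\langle E^*_v\rangle$); and (ii) the inclusion $j_i$ is injective, preserves effectivity and preserves the intersection form, while all off--diagonal intersection numbers $(E_v,E_w)_\calt$ of a plumbing tree are non--negative.

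First I would check $j^*_i(s_h)\in\calS'(\calt_i)$. Since $s_h\in\calS'(\calt)$ we may write $s_h=\sum_{v\in\calv}n_vE^*_v(\calt)$ with $n_v\in\Z_{\geq0}$, so by the definition of $j^*_i$ one has $j^*_i(s_h)=\sum_{v\in\calv(\calt_i)}n_vE^*_v(\calt_i)\in\calS'(\calt_i)$. Equivalently, using the adjunction $(j^*_i(l'),l)_{\calt_i}=(l',j_i(l))_\calt$ and $j_i(E_v(\calt_i))=E_v(\calt)$, one gets $(j^*_i(s_h),E_v)_{\calt_i}=(s_h,E_v)_\calt\leq 0$ for all $v\in\calv(\calt_i)$, which is exactly the condition for membership in $\calS'(\calt_i)$.

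Next I would prove minimality. Suppose $y\in\calS'(\calt_i)$ satisfies $[y]=[j^*_i(s_h)]$ and $y\leq j^*_i(s_h)$; set $l:=j^*_i(s_h)-y\in L(\calt_i)$, so $l\geq0$, and consider $z:=s_h-j_i(l)\in L'(\calt)$. Then $[z]=[s_h]=h$ because $j_i(l)\in L(\calt)$, and $z\leq s_h$ because $j_i$ preserves effectivity. The crucial point is that $z\in\calS'(\calt)$: for $w\notin\calv(\calt_i)$ one has $(j_i(l),E_w)_\calt\geq0$ since $l\geq0$ and $(E_v,E_w)_\calt\geq0$ for $v\neq w$, hence $(z,E_w)_\calt\leq(s_h,E_w)_\calt\leq0$; for $w\in\calv(\calt_i)$, using that $j_i$ preserves the intersection form and $E_w=j_i(E_w(\calt_i))$, one computes $(z,E_w)_\calt=(j^*_i(s_h),E_w)_{\calt_i}-(l,E_w)_{\calt_i}=(y,E_w)_{\calt_i}\leq0$. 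Now minimality of $s_h$ applied to $z\leq s_h$ forces $z=s_h$, i.e.\ $j_i(l)=0$, hence $l=0$ by injectivity of $j_i$, i.e.\ $y=j^*_i(s_h)$. Thus $j^*_i(s_h)$ is a minimal element of $\{s\in\calS'(\calt_i):[s]=[j^*_i(s_h)]\}$, and since this set has a unique minimal element, $j^*_i(s_h)=s_{[j^*_i(s_h)]}$.

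I do not expect a genuine obstacle here; the only step requiring care is keeping track of which intersection form is in play at each point — $j^*_i$ is the adjoint of the form--preserving inclusion $j_i$, so passing between $(\,,\,)_\calt$ and $(\,,\,)_{\calt_i}$ must be done through that adjunction — together with the observation that subtracting $j_i(l)$ cannot spoil the Lipman--cone inequalities at vertices outside $\calt_i$ precisely because off--diagonal intersection numbers are non--negative. Once this bookkeeping is fixed, the argument is short and uses only the definitions and the already cited uniqueness of $s_h$.
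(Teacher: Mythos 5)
Your proof is correct. The paper itself gives no argument here: it states that the lemma ``follows directly from definitions'' and defers to \cite[Lemma 8.4.2]{LSz}, so there is no in-text proof to compare against; your write-up supplies exactly the kind of verification that citation stands in for. Both halves of your argument check out: membership $j^*_i(s_h)\in\calS'(\calt_i)$ follows either from $j^*_i(E^*_v(\calt))=E^*_v(\calt_i)$ on $\calv(\calt_i)$ (and $0$ otherwise) or from the adjunction $(j^*_i(s_h),E_v)_{\calt_i}=(s_h,E_v)_\calt\le 0$; and for minimality, given $y\in\calS'(\calt_i)$ in the same class with $y\le j^*_i(s_h)$, the element $z=s_h-j_i(l)$ with $l=j^*_i(s_h)-y\ge 0$ indeed stays in $\calS'(\calt)$ --- at $w\notin\calv(\calt_i)$ because off-diagonal intersection numbers are non-negative, and at $w\in\calv(\calt_i)$ because the adjunction converts $(z,E_w)_\calt$ into $(y,E_w)_{\calt_i}\le 0$ --- so minimality of $s_h$ in its class forces $l=0$, and uniqueness of the minimal representative in $\calS'(\calt_i)$ finishes the proof. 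The only point worth making explicit is the characterization $\calS'=\{l'\in L':(l',E_v)\le 0\ \forall v\}$ you use at the start, which is fine since $(l',E_v)\in\Z$ for $l'\in L'$, so the $E^*$-coefficients are automatically non-negative integers; with that noted, the bookkeeping between $(\,,\,)_\calt$ and $(\,,\,)_{\calt_i}$ is handled correctly throughout.
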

%
In particular, (\ref{eq:SWIC2}) applied for each $\calt_i$ gives the vanishing
of the $\sum_i$ in (\ref{eq:last}). Moreover, by Proposition \ref{prop:PCcut} one has
$$\mathfrak{Q}^\calt_{h,\cali}\,(\Delta_h)=
(\bt^{-\Delta_h|_\cali}Z_h^\calt(\bt_\cali)|_{\not\geq 0})({\bf 1})+
{\rm pc}^{\pi_\cali(S'_\R)}(\bt^{-\Delta_h|_\cali}Z_h^\calt(\bt_\cali)|_{\geq 0}).
$$
Hence we obtain the following reduction formula
\begin{proposition}\label{prop:red2}
\begin{equation*}
 \mathfrak{sw}_{-h*\sigma_{can}}(M)+\frac{(K+2s_h)^2+|\mathcal{V}|}{8} =
  - (\bt^{-\Delta_h|_\cali}Z_h^\calt(\bt_\cali)|_{\not\geq 0})({\bf 1})-
{\rm pc}^{\pi_\cali(S'_\R)}(\bt^{-\Delta_h|_\cali}Z_h^\calt(\bt_\cali)|_{\geq 0}).
\end{equation*}
(By \cite[(4.3.15)]{LN}
on the right hand side  one can replace $\Delta_h$ by $s_h$, in this way the series will be `genuine' series with integral exponents.)
\end{proposition}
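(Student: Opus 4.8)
The plan is to specialize the quasipolynomial surgery identity of Corollary~\ref{cor:QP} to the argument $l'_0=s_h$, use the rationality of the pieces $\calt_i$ to annihilate the sum over them, and then rewrite the one surviving term $\mathfrak{Q}^\calt_{h,\cali}(\Delta_h)$ by means of Proposition~\ref{prop:PCcut}.

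Concretely, I would start from~(\ref{eq:last}), which is Corollary~\ref{cor:QP} evaluated at $l'_0=s_h=r_h+\Delta_h$. By the Lemma of~\ref{ss:8.4}, $j^*_i(s_h)=s_{[j^*_i(s_h)]}$ is the minimal representative in $\calS'(\calt_i)$ of its class; since each $\calt_i$ is rational, identity~(\ref{eq:SWIC2}) applied to $\calt_i$ says precisely that the $i$-th summand of the sum $\sum_i(\cdots)$ in~(\ref{eq:last}) vanishes. Hence the whole sum over the pieces drops out and~(\ref{eq:last}) collapses to
\begin{equation*}
\mathfrak{sw}_{-h*\sigma_{can}}(M)+\frac{(K+2s_h)^2+|\mathcal{V}|}{8} = -\,\mathfrak{Q}^\calt_{h,\cali}(\Delta_h).
\end{equation*}

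Next I would invoke Proposition~\ref{prop:PCcut} for the series $S(\bt):=Z_h^\calt(\bt_\cali)$ --- a series in the variables indexed by $\cali$, supported on $\pi_\cali(\calS')$ --- with cone $\mathcal{K}=\pi_\cali(\calS'_{\R})$ and shift $l_0=\Delta_h|_\cali$. Its counting function is $Q^\calt_{h,\cali}$, which by Corollary~\ref{cor:QP2} admits the quasipolynomial $\mathfrak{Q}^\calt_{h,\cali}$ on (a shift of) this cone, so Proposition~\ref{prop:PCcut} gives
\begin{equation*}
\mathfrak{Q}^\calt_{h,\cali}(\Delta_h) = (\bt^{-\Delta_h|_\cali}Z_h^\calt(\bt_\cali)|_{\not\geq 0})({\bf 1}) + {\rm pc}^{\pi_\cali(S'_\R)}(\bt^{-\Delta_h|_\cali}Z_h^\calt(\bt_\cali)|_{\geq 0}).
\end{equation*}
Substituting this into the previous display yields the assertion. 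The parenthetical statement (replacing $\Delta_h$ by $s_h$) then follows from the translation formula \cite[(4.3.15)]{LN}, which guarantees that the two choices produce the same right-hand side while making the exponents integral.

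The step that needs the most care --- and the only real obstacle --- is the lattice bookkeeping in the two applications above. One must check that the hypotheses of Proposition~\ref{prop:PCcut} genuinely hold in the \emph{reduced} lattice: that $Z_h^\calt(\bt_\cali)$ is supported on $\pi_\cali(\calS')$, that $\pi_\cali(\calS'_{\R})=\R_{\geq 0}\langle\pi_\cali(E^*_v)\rangle_v$ is generated by vectors all of whose $\cali$-coordinates are strictly positive (immediate from the positivity of the entries of each $E^*_v$), and that the quasipolynomial $\mathfrak{Q}^\calt_{h,\cali}$ computed in $L$ as in Corollary~\ref{cor:QP} may legitimately be re-read as the quasipolynomial of $Z_h^\calt(\bt_\cali)$ in the lattice $L|_\cali$. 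This last identification is exactly the content of~(\ref{eq:PcPc}) and the discussion preceding it in~\ref{ss:discussion}: since the $\cali$-coordinates of $r_h$ are automatically in $[0,1)$, the periodic constant is insensitive to whether one works in $L$ or in $L|_\cali$, so no discrepancy arises.
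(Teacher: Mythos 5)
Your proposal is correct and follows essentially the same route as the paper: evaluate Corollary \ref{cor:QP} at $l'_0=s_h=r_h+\Delta_h$ (i.e.\ (\ref{eq:last})), kill the sum over the pieces by combining the lemma $j^*_i(s_h)=s_{[j^*_i(s_h)]}$ with the rationality identity (\ref{eq:SWIC2}), and rewrite $\mathfrak{Q}^\calt_{h,\cali}(\Delta_h)$ via Proposition \ref{prop:PCcut}. Your extra remarks on the lattice bookkeeping (support in $\pi_\cali(\calS')$ and the identification (\ref{eq:PcPc})) are sound and merely make explicit what the paper leaves implicit.
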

\subsection{More examples and applications.} \label{ss:8.5}
(1)
The surgery formulae of this section generalize those surgery formulae,
which reduce the lattice $L$ to a lower rank lattice associated with
`bad vertices'. We recall that a collection of vertices
  $\cali$ of $\calv$
is called `bad' if by decreasing the decoration of these vertices
 on the graph  we obtain a
rational graph  (cf. \cite{NOSZ,LNRed}). Since the subgraph of a rational graph is rational, if $\cali$ consists of `bad vertices' then all components of $\calt\setminus \cali$ are rational. (Nevertheless, the converse is not true,
see e.g. examples from \cite[8.2(5)]{NOSZ}.) In this sense our new
surgery formula from Proposition \ref{prop:red2} generalizes
 \cite[Th. 5.3]{LNRed}.

(2)
A special family of graph manifolds when $\calt\setminus \cali$ are all rational  is provided by $S^3_{-d}(K)$, the $(-d)$-surgery along the connected sum $K=K_1\#\dots \#K_{\nu}\subset S^3$ of algebraic knots $K_{\ell}$. In this case there is a special vertex $v_+$ such that all the connected components of $\calt\setminus v_+$ represent  $S^3$. In this case $Z_0(t_{v_+})$ can be computed from the Alexander polynomials of the knots $K_{\ell}$, providing
explicit formula for the Seiberg--Witten invariants in terms of these Alexander polynomials.
For  details see  \cite[8.1]{BN} or \cite[Th. 2.4.5]{NR}.

\section{The case of numerically Gorenstein graphs}

\subsection{} Recall that Corollary \ref{cor:QP} assures that the
counting function $Q^\calt_{h,\cali}(l_0') $ and its quasipolynomial
$\mathfrak{Q}^\calt_{h,\cali}(l'_0-r_h)$ agree whenever all
$a_v$ coefficients of $l_0'$ are sufficiently large. In general, for an arbitrary
 graph and $h$ it is hard to determine a precise (and sharp) bound from which this
 equality holds. However, for numerically Gorenstein graphs and $h=0$
 we determine such a bound. The presentation also shows the perfect parallelism of our `topological dualities' with the (algebraic/analytic) Gorenstein (or Serre)
 dualities known in singularity theory or algebraic geometry.

 \subsection{Definitions and notations.}  The connected negative definite graph $\calt$ is called {\it numerically Gorenstein} if $K\in L$.
 In this section we  assume  that the graph is {\it minimal good} (that is, there exists no vertex $v$ with $E_v^2=-1$ and $\delta_v\leq 2$) and  numerically Gorenstein, and we consider $h=0$ only.
 We denote the anticanonical cycle $-K$  by $Z_K$.
 Note that  $Z_K=0$ if and only if
 $\calt$ is  ADE--graph (all decorations are $-2$), and in all other cases
 all the coefficients of $Z_K$ are strict positive \cite[Prop. 2.1]{Laufer},
 \cite[Cor. 2.8]{PPP}.

 Recall that $\chi:L\to \Z$ was defined as $-(l,l-Z_K)/2$, hence the first trace of the duality/symmetry is $\chi(l)=\chi(Z_K-l)$.

Motivated by the theory of lattice cohomology (see e.g. \cite{Nlat,NJEMS})
 we consider for any $\calj\subset \calv$
 lattice cubes $ (l,\calj)$, of dimension $|\calj|$, of the cubical decomposition given by
 $L\simeq \Z^{|\calv|}\subset \R^{|\calv|}$. The vertices of such a cube $(l,\calj)$ are
 $\{l+E_{\calj'}\}_{\calj'\subset \calj}$ (where
 $E_{\calj}=\sum_{v\in\calj}E_v$).
 The weight of the cube  $ (l,\calj)$  is defined as
 $w(l, \calj) = \max_{\calj' \subset \calj}\{\chi(l + E_{\calj'})\}$.

For lattice points $a \leq b$, $a,b \in L$, we define the rectangle $R(a, b)$ by
 $\{x\in L\otimes \R \,:\, a \leq x \leq b\}$. Then the cube
 $(l,\calj)$ belongs to $R(a,b)$ if $a\leq l+E_{\calj'}\leq b$ for
 all its vertices.

The cycle $j_\cali\pi_\cali Z_K\in L$ has the same $E_v$--coefficient as $Z_K$
whenever $v\in \cali$, otherwise it is zero. 
They define the rectangles $R(\cali):=R(j_\cali\pi_\cali Z_K, Z_K)$. E.g.,
$R(\emptyset) =R(0,Z_K)$ and $R(\calv)=R(Z_K,Z_K)$.

 In the next discussion it is convenient to use the next abridged notation
 for any connected $\calt$:
 $$\overline{\mathfrak{sw}}(\calt) = -\mathfrak{sw}_{\sigma_{can}}(M(\calt))- (K^2+|\mathcal{V}|)/8.$$
If $\calt$ has several connected components, say
 $\calt = \cup_{i}\calt_i$, then we set
  $\overline{\mathfrak{sw}}(\calt) = \sum_{i }\overline{\mathfrak{sw}}(\calt_i)$.

 The setup is as in Section \ref{s:3}:
 $\cali\subset \calv$ is non--empty,  and $\calt\setminus\cali=\cup_i\calt_i$.
  We write
 $Z^\calt_0(\bt)=\sum_{l\in L}z(l)\bt^l$.

 We start with the following immediate consequence of (\ref{eq:QPrestr})
 (use $j^*_i(K)=K(\calt_i)\in L'(\calt_i)$):
 \begin{proposition}\label{prop:duality}
 {\bf (Topological duality of the quasipolynomial)} The quasipolynomial
 $\mathfrak{Q}^\calt_{0,\cali}$ satisfies the symmetry
  $\mathfrak{Q}^\calt_{0,\cali}(l)=\mathfrak{Q}^\calt_{0,\cali}(Z_K-l)$,
  in particular
  ${\rm pc}^{\pi_\cali(\calS'_\R)}(Z_0(\bt_\cali))=
  \mathfrak{Q}^\calt_{0,\cali}(0)=\mathfrak{Q}^\calt_{0,\cali}(Z_K)$.
 \end{proposition}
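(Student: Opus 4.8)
The plan is to read the symmetry directly off the closed formula (\ref{eq:QPrestr}); the only genuinely geometric ingredients will be the identity $j^*_i(K)=K(\calt_i)$ and the conjugation behaviour of the Seiberg--Witten invariant. Since here $h=0$ (so $r_0=0$), I would first abbreviate, for $x\in L'$,
\[
g(x):=\mathfrak{sw}_{-[x]*\sigma_{can}}(M)+\frac{(K+2x)^2+|\calv|}{8},
\]
and, for $x\in L'(\calt_i)$, the analogous quantity $g_i(x)$ with $M,K,\sigma_{can},\calv$ replaced by $M_i,K(\calt_i),\sigma_{can,i},\calv(\calt_i)$. Then (\ref{eq:QPrestr}) reads $-\mathfrak{Q}^\calt_{0,\cali}(l)=g(l)-\sum_i g_i(j^*_i(l))$ for $l\in L$. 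So the statement reduces to two claims: (a) the pointwise reflection symmetries $g(x)=g(-K-x)$ on $L'$ and $g_i(x)=g_i(-K(\calt_i)-x)$ on $L'(\calt_i)$; and (b) the compatibility $j^*_i(Z_K-l)=-K(\calt_i)-j^*_i(l)$. Granting (a) and (b), substitution gives at once
\[
-\mathfrak{Q}^\calt_{0,\cali}(Z_K-l)=g(-K-l)-\sum_i g_i\big(-K(\calt_i)-j^*_i(l)\big)=g(l)-\sum_i g_i\big(j^*_i(l)\big)=-\mathfrak{Q}^\calt_{0,\cali}(l),
\]
which is the quasipolynomial symmetry; then $l=0$ and $l=Z_K$, together with ${\rm pc}^{\pi_\cali(\calS'_\R)}(Z_0(\bt_\cali))=\mathfrak{Q}^\calt_{0,\cali}(0)$ from Corollary \ref{cor:QP2} (cf. (\ref{eq:PcPc})), give the displayed equalities.

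For (b) I would note $j^*_i(K)=K(\calt_i)$: since $j_i$ preserves the intersection form, for $v\in\calv(\calt_i)$ one has $(j^*_i(K),E_v)_{\calt_i}=(K,E_v)_{\calt}=-2-(E_v,E_v)_{\calt}=-2-(E_v,E_v)_{\calt_i}$, which equals $(K(\calt_i),E_v)_{\calt_i}$ by the adjunction relations of $\calt_i$; non-degeneracy of the intersection form of $\calt_i$ then forces $j^*_i(K)=K(\calt_i)$, and (b) follows by linearity of $j^*_i$ and $Z_K=-K$. For (a) I split $g$ into its two summands. The quadratic summand is symmetric because $K+2(-K-x)=-(K+2x)$, so its square is unchanged (identically for $g_i$). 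For the Seiberg--Witten summand I would invoke the conjugation formula for $spin^c$--structures: from $c_1(\widetilde{\sigma}_{can})=-K$, the normalization $c_1(\alpha*\widetilde{\sigma}_{can})=c_1(\widetilde{\sigma}_{can})+2\alpha$, and $c_1(\overline{\widetilde{\sigma}})=-c_1(\widetilde{\sigma})$, one gets $\overline{\alpha*\widetilde{\sigma}_{can}}=(K-\alpha)*\widetilde{\sigma}_{can}$ on $\widetilde{X}$, hence $\overline{[\alpha]*\sigma_{can}}=[K-\alpha]*\sigma_{can}$ on $M$ after restriction to the boundary; combined with the conjugation invariance $\mathfrak{sw}_{\sigma}(M)=\mathfrak{sw}_{\overline{\sigma}}(M)$ of the Seiberg--Witten invariant of a rational homology sphere this yields
\[
\mathfrak{sw}_{-[-K-x]*\sigma_{can}}(M)=\mathfrak{sw}_{[K+x]*\sigma_{can}}(M)=\mathfrak{sw}_{\overline{[-x]*\sigma_{can}}}(M)=\mathfrak{sw}_{-[x]*\sigma_{can}}(M),
\]
which is exactly the reflection symmetry of the Seiberg--Witten summand of $g$; the argument for $g_i$ is identical with $K$ replaced by $K(\calt_i)$.

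The hard (i.e. non-formal) part is precisely this last symmetry for the pieces $M_i$. The subtlety to watch is that $\calt$ is numerically Gorenstein — so $K\in L$, $\overline{[x]*\sigma_{can}}=[-x]*\sigma_{can}$, and the symmetry of $g$ on $L$ requires no conjugation argument at all — whereas the subgraphs $\calt_i$ are in general \emph{not} numerically Gorenstein ($[K(\calt_i)]\neq 0$ in $H(\calt_i)$), so one cannot shortcut $[K(\calt_i)-x]$ to $[-x]$ and must genuinely use conjugation invariance of $\mathfrak{sw}$ on each $M_i$. Once (a) is secured the remainder is the one-line substitution displayed above, and I expect no further difficulty.
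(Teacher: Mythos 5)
Your proof is correct and follows essentially the same route as the paper, which derives the symmetry directly from (\ref{eq:QPrestr}) for $h=0$ together with $j^*_i(K)=K(\calt_i)$. The only difference is that you make explicit the conjugation invariance $\mathfrak{sw}_{\overline{\sigma}}(M_i)=\mathfrak{sw}_{\sigma}(M_i)$ needed for the (generally non-Gorenstein) pieces $\calt_i$, a standard fact the paper leaves implicit in calling the statement an ``immediate consequence''.
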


\subsection{} In the  next formulae the rectangle  $R(0,Z_K)$ will play a crucial role:
basically we will express all our invariants as sums  of weighted cubes of different faces of $R(0,Z_K)$.

The main result of this section is the following.

\begin{thm}\label{th:Z_Kbound} Under the above assumptions
 $\mathfrak{Q}^\calt_{0,\cali}(Z_K)=Q^\calt_{0,\cali}(Z_K)$.
 In particular,
\begin{equation}\label{eq:Z_KBound}
\textnormal{pc}^{\pi_\cali(\calS'_{\R})}(Z_{0}(\mathbf{t}_\cali)) =
\mathfrak{Q}^\calt_{0 ,\cali}(Z_K) =Q^\calt_{0,\cali}(Z_K).
\end{equation}
\end{thm}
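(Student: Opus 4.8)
The plan is the following. The ``in particular'' assertion is immediate from Proposition~\ref{prop:duality}: the symmetry $\mathfrak{Q}^\calt_{0,\cali}(l)=\mathfrak{Q}^\calt_{0,\cali}(Z_K-l)$ at $l=Z_K$ gives $\mathfrak{Q}^\calt_{0,\cali}(Z_K)=\mathfrak{Q}^\calt_{0,\cali}(0)=\mathrm{pc}^{\pi_\cali(\calS'_{\R})}(Z_0(\bt_\cali))$, so only the first equality $\mathfrak{Q}^\calt_{0,\cali}(Z_K)=Q^\calt_{0,\cali}(Z_K)$ carries content. First I would put its left--hand side in closed form by plugging $l=Z_K$ into Corollary~\ref{cor:QP}: using $K+2Z_K=-K=Z_K$ and $j^*_i(Z_K)=-K(\calt_i)=Z_K(\calt_i)$ (so $K(\calt_i)+2j^*_i(Z_K)=-K(\calt_i)$ and $-[j^*_i(Z_K)]*\sigma_{can,i}=[K(\calt_i)]*\sigma_{can,i}=\overline{\sigma_{can,i}}$), together with the conjugation invariance $\mathfrak{sw}_{\bar\sigma}=\mathfrak{sw}_{\sigma}$ of the Seiberg--Witten invariant, all quadratic and Seiberg--Witten terms collapse and one obtains
\[
\mathfrak{Q}^\calt_{0,\cali}(Z_K)=\overline{\mathfrak{sw}}(\calt)-\overline{\mathfrak{sw}}(\calt\setminus\cali)=\mathrm{pc}^{\pi_\cali(\calS'_{\R})}(Z_0(\bt_\cali)),
\]
the last equality by Theorem~\ref{th:MainResult}. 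Thus the theorem reduces to the \emph{finite} identity $Q^\calt_{0,\cali}(Z_K)=\overline{\mathfrak{sw}}(\calt)-\overline{\mathfrak{sw}}(\calt\setminus\cali)$; note $Q^\calt_{0,\cali}(Z_K)$ is genuinely a finite sum, since $\{l'\in\calS'\,:\,l'|_w< Z_K|_w\}$ is finite for each $w\in\cali$ by negative definiteness.

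The approach is then to evaluate both sides as alternating sums of weighted lattice cubes lying inside the rectangle $R(0,Z_K)$. Writing $Q^\calt_{0,\cali}(Z_K)=\sum_{\emptyset\ne\calj\subseteq\cali}(-1)^{|\calj|+1}q^\calt_{0,\calj}(Z_K)$ by inclusion--exclusion (as in \ref{ss:4.4}), using the ``coefficient function'' description of $q^\calt_{0,\calj}$ (the remark after \ref{ss:4.4}, cf.~\cite{LSz}) and the $\chi$--expansion of the Taylor coefficients of $f_0$ that already underlies Theorem~\ref{th:JEMS}, each $q^\calt_{0,\calj}(Z_K)$ becomes an alternating sum of weights $w(l,\calj')$; because the argument is exactly $Z_K$ and $Z_K\ge 0$ (Laufer, \cite[Prop.~2.1]{Laufer}), the cubes involved are confined to $R(0,Z_K)$. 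On the other side, $\overline{\mathfrak{sw}}(\calt)$ and each $\overline{\mathfrak{sw}}(\calt_i)$ are, through the lattice--cohomological description of the normalized Seiberg--Witten invariant, the analogous alternating sums of weighted cubes of the full lattices, and for numerically Gorenstein $\calt$ a convexity/reduction argument confines these to $R(0,Z_K)$ and its faces $R(\cali)$. Finally I would match the two families of cubes via the Gorenstein symmetry $\chi(x)=\chi(Z_K-x)$ --- equivalently $w(l,\calj)=w(Z_K-E_{\calj}-l,\calj)$ --- which is the cube--level incarnation of Proposition~\ref{prop:duality} and furnishes a weight--preserving bijection. An equivalent route is induction on $|\cali|$: Theorem~\ref{prop:1} and Lemma~\ref{lem:EL} reduce matters to the case $\cali=\calv$, i.e.~the claim $Q^\calt_0(Z_K)=\overline{\mathfrak{sw}}(\calt)$ (the extension of Theorem~\ref{th:JEMS} from $-K+\mathrm{int}(\calS')$ down to the boundary point $-K$), together with the corresponding statement $Q^{\calt_i}_{[-K(\calt_i)]}(-K(\calt_i))=\overline{\mathfrak{sw}}(\calt_i)$ on each piece.

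The hard part will be that $Z_K$ is ``small'': it need not have all coefficients large, and the induced classes $j^*_i(Z_K)=-K(\calt_i)$ need not even lie in the Lipman cone $\calS'(\calt_i)$ (cf.~Examples~\ref{ex:graph1},~\ref{ex:graph2}), so Theorems~\ref{th:JEMS},~\ref{prop:1} and Corollary~\ref{cor:QP} --- which only guarantee $Q=\mathfrak{Q}$ for arguments with all coordinates large --- cannot be invoked verbatim. One is forced to re-run the convexity Proposition~\ref{prop:LSz} and the surgery Lemma~\ref{lem:EL} with all truncations performed inside the finite rectangle $R(0,Z_K)$, and to establish separately the boundary evaluation $Q^{\calt_i}_{[-K(\calt_i)]}(-K(\calt_i))=\overline{\mathfrak{sw}}(\calt_i)$ for the possibly non--numerically--Gorenstein components $\calt_i$; this last point is the genuinely delicate one and, as in Section~\ref{s:ExProof}, is most naturally handled through the universal abelian covering and the rationality of cyclic quotient singularities, or by the conjugation symmetry of lattice cohomology. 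The numerically Gorenstein and minimal--good hypotheses enter precisely in guaranteeing that nothing ``leaks out'' of $R(0,Z_K)$ --- equivalently, that the asymptotic quasipolynomial equality is already exact at $l=Z_K$.
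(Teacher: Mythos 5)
Your reduction of the theorem to the finite identity $Q^\calt_{0,\cali}(Z_K)=\overline{\mathfrak{sw}}(\calt)-\overline{\mathfrak{sw}}(\calt\setminus\cali)$ is correct and coincides with the paper's own reduction (Proposition \ref{prop:duality} combined with Theorem \ref{th:MainResult}, using conjugation invariance of $\mathfrak{sw}$ and $j^*_i(K)=K(\calt_i)$). From that point on, however, you only have a plan, and both of your routes leave the essential step unproved. In your first route, the assertion that the cubes contributing to $q^\calt_{0,\calj}(Z_K)$ are ``confined to $R(0,Z_K)$ because $Z_K\geq 0$'' is unjustified as stated: $q^\calt_{0,\calj}(Z_K)$ sums $z(l)$ over all lattice points of the Lipman cone with $l|_{\calj}\prec Z_K|_{\calj}$, and the coordinates of such $l$ outside $\calj$ are unbounded; confining the alternating cube sum to the rectangle is exactly the nontrivial ``contraction'' of \cite{NJEMS}, \cite[Lemma 5.3.3]{LNRed}, which the paper invokes as \eqref{eq:Qbb} and \eqref{eq:QbbI}. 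Moreover the matching of the two cube families is not achieved by a weight--preserving bijection coming from $\chi(x)=\chi(Z_K-x)$ alone: the paper additionally needs the inequality $Z_K|_{\cali}-Z_K(\calt(\cali))\geq 0$ (this is where the numerically Gorenstein and minimality hypotheses enter in an essential way), so that the $b\geq Z_K$ formula \eqref{eq:Qb} applies to every full subgraph with $b=Z_K|_{\cali}$, giving \eqref{eq:Qb3}; the two decompositions are then reconciled by a M\"obius inversion over all full subgraphs (the auxiliary function $\mathfrak{s}$ of \eqref{eq:defh} and Lemma \ref{lem:hsum}), not by a direct symmetry bijection.

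Your second route has a sharper gap: the induction would require the boundary evaluation $Q^{\calt_i}_{[j^*_i(Z_K)]}(j^*_i(Z_K))=\overline{\mathfrak{sw}}(\calt_i)$ for the components $\calt_i$, which, as you note, lies outside the range of Theorems \ref{th:JEMS} and \ref{prop:1}; but the tools you propose for it (universal abelian covers and rationality of cyclic quotient singularities, as in Section \ref{s:ExProof}) are unavailable here, since in the numerically Gorenstein section the $\calt_i$ are arbitrary negative definite subgraphs --- not strings, not assumed rational --- and $[j^*_i(Z_K)]$ is in general a nonzero class, so that statement is itself as hard as what is to be proved. The paper deliberately avoids it: it never evaluates the counting function of a subgraph at its own (possibly non-integral) anticanonical cycle, but instead computes $\overline{\mathfrak{sw}}(\calt(\cali))$ as a cube sum over the larger rectangle $R(0,Z_K|_{\cali})$ via \eqref{eq:Qb}. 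So while your reduction and your diagnosis of where the difficulty sits are accurate, the proposal does not contain a proof of the key identity.
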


The main advantage of (\ref{eq:Z_KBound}) is that the
the needed correction term in the surgery formulae, the
usually hardly computable and more theoretical
$\textnormal{pc}^{\pi_\cali(\calS'_{\R})}(Z_{0}(\mathbf{t}_\cali)) $,
can be replaced by the  directly computable
$\sum _{l|_{\cali}\not\geq Z_K|_{\cali}}z(l)$. This shows that for such graphs the
quasipolynomial  $\mathfrak{Q}^\calt_{0 ,\cali}$ can be avoided.
\begin{proof}
First we recall some needed results.
\begin{fact}\ \cite[Th. 2.3.10]{NJEMS} For any $l\in L$
\begin{equation}\label{eq:zw}
z(l) = \sum_{\calj \subset \calv} (-1)^{|\calj|+1} w(l, \calj).
\end{equation}
\end{fact}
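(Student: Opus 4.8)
The plan is to strip the statement of all geometry, reduce it to a self-contained combinatorial identity about the tree $\calt$, and prove that identity by repeatedly removing a leaf. \emph{Step 1 (reduction).} Since the classes $\{E^*_v\}_{v\in\calv}$ are linearly independent over $\bQ$, the monomials $\bt^{\sum_vk_vE^*_v}$ ($k_v\in\bZ_{\ge0}$) occurring in the Taylor expansion of $f(\bt)=\prod_v(1-\bt^{E^*_v})^{\delta_v-2}$ are pairwise distinct; writing an integral cycle as $l=\sum_vc_vE^*_v$ (so $c_v=-(l,E_v)\in\bZ$) this gives the product formula
\[
z(l)=\prod_{v\in\calv}e_v(c_v),\qquad e_v(c):=\Coeff_{x^c}\big((1-x)^{\delta_v-2}\big),
\]
a product of signed binomial coefficients, with $e_v(c)=0$ for $c<0$ (which re‑proves $\supp Z_0\subset\calS'$). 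On the other side, from $\chi(a+b)=\chi(a)+\chi(b)-(a,b)$ and the adjunction relations one gets $\chi(l+E_{\calj'})=\chi(l)+g(\calj')$ with $g(\calj'):=|\calj'|-b(\calj')+\sum_{v\in\calj'}c_v$, where $b(\calj')$ is the number of edges of $\calt$ with both endpoints in $\calj'$. Hence $w(l,\calj)=\chi(l)+\max_{\calj'\subseteq\calj}g(\calj')$, and since $\sum_{\calj\subseteq\calv}(-1)^{|\calj|+1}=0$ the constant $\chi(l)$ disappears from $\sum_\calj(-1)^{|\calj|+1}w(l,\calj)$. The theorem thus reduces to: \emph{for every tree $\Gamma$ on a vertex set $\calv$ and every $(c_v)_v\in\bZ^\calv$} (with $e_v$, $g$ built from $(\Gamma,(c_v))$),
\[
\prod_{v\in\calv}e_v(c_v)\ =\ R(\Gamma;c):=\sum_{\calj\subseteq\calv}(-1)^{|\calj|+1}\max_{\calj'\subseteq\calj}g(\calj').
\]

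\emph{Step 2 (leaf induction).} Induct on $|\calv|$, the case $|\calv|=1$ being a one‑line check. For $|\calv|\ge2$ pick a leaf $v_0$ with unique neighbour $w_0$, set $\Gamma'=\Gamma\setminus v_0$, and split $R(\Gamma;c)$ according to whether $v_0\in\calj$. The part with $v_0\notin\calj$ is literally $R(\Gamma';(c_v)_{v\ne v_0})$ (deleting a leaf changes neither the edges inside subsets of $\calv\setminus v_0$ nor the $c_v$'s), hence by induction equals $\prod_{v\ne v_0}e^{\Gamma'}_v(c_v)$. For $v_0\in\calj$, write $\calj=\{v_0\}\sqcup\calj_1$; because $v_0$ is a leaf, $g(\{v_0\}\cup\calj')-g(\calj')=1+c_{v_0}-\mathbf 1_{\{w_0\in\calj'\}}$, so with $\alpha:=1+c_{v_0}$ a short comparison gives
\[
\max_{\calj'\subseteq\calj}g(\calj')=
\begin{cases}
\ \max_{\calj'\subseteq\calj_1}g(\calj')&\text{if }\alpha\le0,\\[1.5mm]
\ \alpha+\max_{\calj'\subseteq\calj_1}g''(\calj')&\text{if }\alpha\ge1,
\end{cases}
\]
where $g''$ is $g$ with $c_{w_0}$ replaced by $c_{w_0}-1$. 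Summing over $\calj_1\subseteq\calv\setminus v_0$ and using $\sum_{\calj_1}(-1)^{|\calj_1|}=0$ (valid since $|\calv|\ge2$) to annihilate the additive constant $\alpha$, the $v_0\in\calj$ part equals $-R(\Gamma';(c_v)_{v\ne v_0})$ when $\alpha\le0$, and $-R(\Gamma';(c_v)_{v\ne v_0}\text{ with }c_{w_0}\to c_{w_0}-1)$ when $\alpha\ge1$.

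\emph{Step 3 (closing the induction, and the main obstacle).} If $\alpha\le0$, i.e. $c_{v_0}<0$, the two halves of $R(\Gamma;c)$ cancel, so $R=0=\prod_ve_v(c_v)$ because the factor $e^\Gamma_{v_0}(c_{v_0})=\Coeff_{x^{c_{v_0}}}\big((1-x)^{-1}\big)$ vanishes. If $\alpha\ge1$, i.e. $c_{v_0}\ge0$ and $e^\Gamma_{v_0}(c_{v_0})=1$, then
\[
R(\Gamma;c)=\prod_{v\ne v_0}e^{\Gamma'}_v(c_v)-\Big(\prod_{v\ne v_0}e^{\Gamma'}_v(c_v)\Big)\Big|_{c_{w_0}\to c_{w_0}-1};
\]
since $e^{\Gamma'}_v=e^\Gamma_v$ for $v\ne v_0,w_0$ while $(1-x)^{\delta^\Gamma_{w_0}-2}=(1-x)(1-x)^{\delta^{\Gamma'}_{w_0}-2}$ forces $e^\Gamma_{w_0}(c)=e^{\Gamma'}_{w_0}(c)-e^{\Gamma'}_{w_0}(c-1)$, this collapses to $\prod_{v\ne v_0}e^\Gamma_v(c_v)=\prod_{v}e^\Gamma_v(c_v)$. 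Either way $R(\Gamma;c)=\prod_ve^\Gamma_v(c_v)=z(l)$, completing the induction. The hard part is exactly the case analysis of Step 2: one has to follow how the running maximum $\max_{\calj'\subseteq\calj}g(\calj')$ reacts when the leaf is made available and match it against the single binomial recursion above, with the sign of $\alpha=1+c_{v_0}$ as the switch and the additive constant $\alpha$ (like the global $\chi(l)$) needing to vanish against $\sum(-1)^{|\calj|}=0$.

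As an independent check valid when all $c_v\gg0$: the elementary identity $\prod_v\big(\mathbf 1_{\{l'_v\ge x_v\}}-\mathbf 1_{\{l'_v\ge x_v+1\}}\big)=\mathbf 1_{\{l'=x\}}$ gives $z(l)=\sum_{\calj\subseteq\calv}(-1)^{|\calj|+1}Q^\calt_0(l+E_\calj)$ for every $l\in L$; for $c_v\gg0$, Theorems \ref{th:JEMS}--\ref{th:NJEMSThm} replace each $Q^\calt_0(l+E_\calj)$ by $\mathrm{pc}^{\calS'_\R}(Z_0)+\chi(l+E_\calj)$ and each $w(l,\calj)$ by $\chi(l+E_\calj)$, whence the periodic‑constant terms cancel and the two sides of the Fact agree — matching the conclusion of the combinatorial argument on that range.
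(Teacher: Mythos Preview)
The paper does not prove this statement itself; it quotes it as a Fact from \cite[Th.~2.3.10]{NJEMS} and uses it as input. Your argument, by contrast, is a complete self-contained proof, and it is correct.

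Your route is genuinely elementary: the product formula $z(l)=\prod_v e_v(c_v)$ (from linear independence of the $E^*_v$) and the expansion $\chi(l+E_{\calj'})=\chi(l)+|\calj'|-b(\calj')+\sum_{v\in\calj'}c_v$ strip the problem down to a purely combinatorial identity on the tree with integer vertex labels $c_v$, and the leaf-deletion induction then closes it via the binomial recursion $e^{\Gamma}_{w_0}(c)=e^{\Gamma'}_{w_0}(c)-e^{\Gamma'}_{w_0}(c-1)$. The case split on the sign of $\alpha=1+c_{v_0}$ is handled cleanly: for $\alpha\le 0$ the two halves of $R$ cancel (matching $e^\Gamma_{v_0}(c_{v_0})=0$), while for $\alpha\ge 1$ the additive constant $\alpha$ is killed by $\sum_{\calj_1}(-1)^{|\calj_1|}=0$ (valid since $|\calv|\ge 2$) and the remaining difference reproduces the Pascal-type relation above. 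The base case $|\calv|=1$ ($\delta=0$, so $e(c)=c+1$ for $c\ge 0$) checks out against $\max(0,1+c)$. Your closing cross-check for $c_v\gg 0$ via $z(l)=\sum_\calj(-1)^{|\calj|+1}Q^\calt_0(l+E_\calj)$ and Theorem~\ref{th:NJEMSThm} is also correct and provides a nice consistency test with the Seiberg--Witten side. What you gain over simply citing \cite{NJEMS} is a short, graph-theoretic argument that makes transparent why the tree structure (existence of a leaf) and the adjunction relations are exactly what drive the identity.
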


\begin{fact}\ \cite{NJEMS}, \cite[\S 5.3]{LNRed}
For any  $b \in L$ which satisfies  $b \geq Z_K$ one has
\begin{equation}\label{eq:Qb}
\overline{\mathfrak{sw}}(\calt) =
\sum_{(l, \calj) \subset R(0, b)} (-1)^{|\calj|+1} w(l, \calj).
\end{equation}
Furthermore, there is a combinatorial cancelation (`contraction')  of cubes,
 which identifies \begin{equation}\label{eq:Qbb}
Q^\calt_{0,\calv}(Z_K)=\sum_{l\not\geq Z_K} z(l)=\sum _{l\not\geq Z_K}\, \sum _{\calj\subset \calv}
(-1)^{|\calj|+1} w(l,\calj) \  \ \mbox{with} \ \
\sum_{(l,\calj)\subset R(\emptyset), \atop  l\not= Z_K} (-1)^{|\calj|+1} w(l,\calj).
\end{equation}
In particular, the two identities combined provide
\begin{equation}\label{eq:Qbbb}
Q^\calt_{0,\calv}(Z_K)=w(Z_K,\emptyset)+\overline{\mathfrak{sw}}(\calt)=
\chi(Z_K)+\overline{\mathfrak{sw}}(\calt)
=\overline{\mathfrak{sw}}(\calt).
\end{equation}
\end{fact}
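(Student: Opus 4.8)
The plan is to reduce the claimed equality $\mathfrak{Q}^\calt_{0,\cali}(Z_K)=Q^\calt_{0,\cali}(Z_K)$ to a single combinatorial identity for the coefficients $z(l)=z^\calt(l)$, and then to establish that identity by the cube--weight bookkeeping behind the facts just recalled. Throughout write $\calv'=\calv\setminus\cali$ and $\calt\setminus\cali=\cup_i\calt_i$, $\calv_i=\calv(\calt_i)$.

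First I would carry out the reduction. By Proposition \ref{prop:duality}, $\mathfrak{Q}^\calt_{0,\cali}(Z_K)=\mathfrak{Q}^\calt_{0,\cali}(0)$; and $\mathfrak{Q}^\calt_{0,\cali}(0)$, read off from Corollary \ref{cor:QP2} (equivalently Theorem \ref{th:MainResult}) at $h=0$ and using $j^*_i(K)=K(\calt_i)$, equals $\overline{\mathfrak{sw}}(\calt)-\sum_i\overline{\mathfrak{sw}}(\calt_i)$. On the other hand, splitting the defining sum of $Q^\calt_{0,\cali}(Z_K)$ according to whether $l|_{\calv'}\geq Z_K|_{\calv'}$, and invoking (\ref{eq:Qbbb}) in the form $Q^\calt_{0,\calv}(Z_K)=\overline{\mathfrak{sw}}(\calt)$, one gets
\begin{equation*}
Q^\calt_{0,\cali}(Z_K)=\overline{\mathfrak{sw}}(\calt)-\Sigma,\qquad
\Sigma:=\sum_{l\in L,\ l|_\cali\geq Z_K|_\cali,\ l|_{\calv'}\not\geq Z_K|_{\calv'}}z(l).
\end{equation*}
Hence it suffices to prove the identity $\Sigma=\sum_i\overline{\mathfrak{sw}}(\calt_i)$; the displayed consequence (\ref{eq:Z_KBound}) then follows by combining with Theorem \ref{th:MainResult} and the definition of the periodic constant.

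To prove $\Sigma=\sum_i\overline{\mathfrak{sw}}(\calt_i)$ I would expand each $z(l)$ via (\ref{eq:zw}) and run the contraction in two stages. In the first stage one contracts the $\cali$--directions: numerical Gorensteinness gives $\chi(x)=\chi(Z_K-x)$, and for $l$ with $l|_\cali\geq Z_K|_\cali$ the cubes $(l,\calj)$ with $\calj\cap\cali\neq\emptyset$ cancel in the alternating sum against those with one fewer $\cali$--vertex — this is exactly the contraction producing (\ref{eq:Qbb}), now performed against the upper face of $R(0,Z_K)$ in the $\cali$--coordinates — so that the surviving cubes $(l,\calj)$ satisfy $\calj\subset\calv'$ and $l|_\cali=Z_K|_\cali$, i.e. they lie on the face $R(\cali)=R(j_\cali\pi_\cali Z_K,Z_K)$. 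In the second stage one uses that there are no edges between distinct $\calv_i$, so on $R(\cali)$ the weights decouple block by block; here one must check that the restriction of $\chi_\calt$ to the $\calv_i$--block — with the $\cali$--coordinates frozen at $Z_K|_\cali$ — agrees, up to an additive constant that disappears in the alternating cube--sum, with the Riemann--Roch form attached to the modified intersection form of $\calt_i$ of Lemma \ref{lem:modform} (the cross--terms of the frozen $\cali$--entries with the boundary $\calb_i=\partial\calv_i$ are what produce the diagonal modification). Feeding this into (\ref{eq:Qb})--(\ref{eq:Qbb}) applied to each $\calt_i$ with that modified form, using that a diagonal modification of the intersection form does not change the $3$--manifold $M_i$ and invoking the conjugation symmetry $\mathfrak{sw}_{[-Z_K(\calt_i)]*\sigma_{can,i}}(M_i)=\mathfrak{sw}_{\sigma_{can,i}}(M_i)$, each block evaluates to $\overline{\mathfrak{sw}}(\calt_i)$; summation over $i$ yields the identity.

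The step I expect to be the main obstacle is the second stage above: matching the global weight function $w$ on the face $R(\cali)$ with the modified Riemann--Roch forms of the individual $\calt_i$, and checking that — although $j^*_i(Z_K)=Z_K(\calt_i)$ is typically non--integral and $\calt_i$ need not be numerically Gorenstein — the resulting alternating cube--sum is still $\overline{\mathfrak{sw}}(\calt_i)$ taken with the canonical spin$^c$--structure (both Lemma \ref{lem:modform} and the conjugation symmetry of the Seiberg--Witten invariant are needed for this). By contrast the $\cali$--direction contraction of the first stage is essentially the computation behind (\ref{eq:Qbb}); it goes through routinely once one notes that $z$ is supported on $\calS'$, so every sum in sight is finite and the term--by--term cancellation is legitimate.
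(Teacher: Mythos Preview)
Your reduction to the identity $\Sigma=\sum_i\overline{\mathfrak{sw}}(\calt_i)$ is correct, and the overall strategy is sound; but stage~2 contains a genuine error. The appeal to Lemma~\ref{lem:modform} is misplaced: that lemma concerns the matrix $\{-E^*_v|_{\calv_2}\}_{v\in\calv_2}$ coming from the projected dual basis, which has nothing to do with restricting $\chi_\calt$ to a face of $R(0,Z_K)$. More seriously, the assertion that ``a diagonal modification of the intersection form does not change the 3--manifold $M_i$'' is false --- the plumbed manifold is determined by the Euler numbers, so changing the diagonal changes $M_i$ (a single vertex with decoration $-p$ gives $L(p,1)$). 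Hence the route through a modified Riemann--Roch form and the conjugation symmetry of $\mathfrak{sw}$ does not go through.

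The fix is much simpler than what you anticipate. After the substitution $l=Z_K-m$ (with $m$ supported on $\calv'$) and the Gorenstein duality $\chi(l)=\chi(Z_K-l)$, one has $\chi_\calt(l)=\chi_\calt(m)$; and for $m$ supported on $\calv'$ one checks directly from adjunction that $(m,m)_\calt=(m,m)_{\calt\setminus\cali}$ and $(m,K_\calt)_\calt=(m,K_{\calt\setminus\cali})_{\calt\setminus\cali}$, so $\chi_\calt(m)=\chi_{\calt\setminus\cali}(m)$ with \emph{no} modification of the form. This is exactly the content of (\ref{eq:Qb2})--(\ref{eq:Qb3}) in the paper (applied with $\cali$ replaced by $\calv'$): the cube sum over $R(\cali)$ equals the cube sum over $R(0,Z_K|_{\calv'})$ in the lattice of $\calt\setminus\cali$, and then (\ref{eq:Qb}) for $\calt\setminus\cali$ --- using $Z_K|_{\calv'}\geq Z_K(\calt\setminus\cali)$, which follows because the difference lies in $\calS'(\calt\setminus\cali)$ --- gives $\overline{\mathfrak{sw}}(\calt\setminus\cali)$ on the nose, with the canonical $spin^c$--structure and no conjugation needed.

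With this correction your route becomes viable and is genuinely different from the paper's. The paper does not isolate your $\Sigma$ at all; instead it decomposes $Q^\calt_{0,\cali}(Z_K)$ by inclusion--exclusion into the modified counting functions $q^\calt_{0,\calj}(Z_K)$, introduces an auxiliary M\"obius-inverted function $\mathfrak{s}$ on subgraphs, proves $q^\calt_{0,\calj}(Z_K)=\sum_{\calk\supset\calj}\mathfrak{s}(\calt(\calk))$ via (\ref{eq:QbbI}) and Lemma~\ref{lem:hsum}, and then unwinds the combinatorics. Your approach trades that M\"obius bookkeeping for a direct two--stage contraction; the price is that your stage~1 (contracting an unbounded region in the $\cali$--directions while keeping the $\calv'$--constraint $l|_{\calv'}\not\geq Z_K|_{\calv'}$) still needs to be written out carefully --- it is \emph{not} literally the contraction of (\ref{eq:Qbb}), since the region is bounded from below in the $\cali$--coordinates rather than from above, and a second contraction in the $\calv'$--directions is needed to land in the box $R(\cali)$.
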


This result was stated  for  connected graphs,
however it extends naturally to non--connected graphs as well by the
additivity of $ \chi$ and $Z_K$ over the connected components.

Note that  (\ref{eq:Qbbb}) together with (\ref{eq:SUMQP2})
and Proposition  \ref{prop:duality} imply Theorem \ref{th:Z_Kbound} for $\cali=\calv$, that is: $
\mathfrak{Q}^\calt_{0,\calv}(Z_K)= \mathfrak{Q}^\calt_{0,\calv}(0)=
\overline{\mathfrak{sw}}(\calt)=
Q^\calt_{0,\calv}(Z_K)$. 

The very same combinatorial cancelation of  (\ref{eq:Qbb})
from \cite{NJEMS}, \cite[\S 5.3, Lemma 5.3.3]{LNRed}
(with completely identical proof) provides the following identity as well.
\begin{fact}
For any $\cali\subset \calv$, $\cali\not=\emptyset$,  one has
  \begin{equation}\label{eq:QbbI}
q^\calt_{0,\cali}(Z_K) =
\sum_{(l,\calj)\subset R(\emptyset)\setminus \cup_{v\in\cali} R(\{v\})} (-1)^{|\calj|+1} w(l,\calj).
\end{equation}
(In the sum those cubes do not appear  which sit in the affine hyperplanes
$l|_v=Z_K|_v$ for some $v\in \cali$.)
\end{fact}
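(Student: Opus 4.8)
\emph{Plan.} The identity \eqref{eq:QbbI} is obtained by running the combinatorial ``contraction'' that proves \eqref{eq:Qb}--\eqref{eq:Qbb} in \cite{NJEMS} and \cite[\S 5.3, Lemma 5.3.3]{LNRed}, the only change being that the rectangle $R(\emptyset)=R(0,Z_K)$ is now truncated along the $|\cali|$ ``top facets'' $R(\{v\})$, $v\in\cali$, instead of merely at the corner $Z_K$. First I would start from $q^\calt_{0,\cali}(Z_K)=\sum_{l\in L,\ l|_\cali\prec Z_K|_\cali}z(l)$, a finite sum since $z$ is supported on $\calS'$ and $\{l\in\calS'\,:\,l|_\cali\prec Z_K|_\cali\}$ is finite, and expand each coefficient by \eqref{eq:zw}: $z(l)=\sum_{\calj\subset\calv}(-1)^{|\calj|+1}w(l,\calj)$. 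Keeping the grouping by the ``lower corner'' $l$ — so that finiteness is preserved packet by packet — this writes $q^\calt_{0,\cali}(Z_K)$ as an alternating sum of cube weights over all lattice cubes $(l,\calj)$ with $l|_\cali\prec Z_K|_\cali$.

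Next I would sort these cubes into three classes, using that the top vertex of $(l,\calj)$ is $l+E_\calj$ and its bottom vertex is $l$. \emph{Class (A):} $l\not\ge 0$. Then $l\notin\calS'$, so $z(l)=0$, and by \eqref{eq:zw} the whole $l$-packet sums to $z(l)=0$. \emph{Class (B):} $l\ge 0$ and $l+E_\calj\le Z_K$; since $l|_\cali\prec Z_K|_\cali$ forces $(l+E_\calj)_v\le (Z_K)_v$ for $v\in\cali$ automatically, a short check shows these are exactly the cubes contained in $R(0,Z_K)$ and in no $R(\{v\})$ with $v\in\cali$, i.e.\ exactly the cube family on the right-hand side of \eqref{eq:QbbI}. \emph{Class (C):} the rest, i.e.\ $l\ge 0$, $l|_\cali\prec Z_K|_\cali$, but $(l+E_\calj)_u>(Z_K)_u$ for some $u$, which necessarily lies in $\calv\setminus\cali$. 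Thus $q^\calt_{0,\cali}(Z_K)=0+(\text{RHS of }\eqref{eq:QbbI})+(\text{class (C) sum})$, and it remains to prove that the class (C) sum vanishes.

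The vanishing of the class (C) sum is the combinatorial heart, and it is the cancellation carried out in \cite{NJEMS} and \cite[Lemma 5.3.3]{LNRed}. Its simplest layer is the fixed-point-free, sign-reversing involution $\Phi$ which, given $(l,\calj)$ in class (C), picks the least $u_0\in\calv\setminus\cali$ with $(l+E_\calj)_{u_0}>(Z_K)_{u_0}$ and sends $(l,\calj)$ to $(l-E_{u_0},\calj\cup\{u_0\})$ if $u_0\notin\calj$ and to $(l+E_{u_0},\calj\setminus\{u_0\})$ if $u_0\in\calj$. One checks that $\Phi$ preserves class (C) (the constraint $l|_\cali\prec Z_K|_\cali$ is untouched, as $u_0\notin\cali$, and $l\ge 0$ is preserved, as $l_{u_0}\ge (Z_K)_{u_0}\ge 0$), that it keeps the top vertex $l+E_\calj$ — hence the set of over-the-top directions and $u_0$ itself — unchanged, so $\Phi^2=\mathrm{id}$, and that it flips $(-1)^{|\calj|+1}$. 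The key point is weight invariance: expanding $w(l-E_{u_0},\calj\cup\{u_0\})=\max\big(w(l,\calj),\ \max_{\calj'\subset\calj}\chi(l-E_{u_0}+E_{\calj'})\big)$, the second maximum is $\le w(l,\calj)$ because, by the adjunction relation $(Z_K,E_{u_0})=(E_{u_0},E_{u_0})+2\le 1$, the function $\chi$ does not decrease when $E_{u_0}$ is added at a point whose $u_0$-coordinate is $\ge (Z_K)_{u_0}$ and whose coordinates at the neighbours of $u_0$ are $\le$ those of $Z_K$ — so that $\chi(l-E_{u_0}+E_{\calj'})\le\chi(l+E_{\calj'})\le w(l,\calj)$. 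Here the standing hypotheses of the section enter: $Z_K=-K\in L$ (so $\chi$ and the boxes live on $L$) and negative-definiteness (so $(E_{u_0},E_{u_0})\le -1$). Orbit by orbit the class (C) sum then cancels; note that for $\cali=\calv$ class (C) is empty and the identity is immediate.

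The main obstacle is precisely this last step. The naive one-direction involution above does not preserve the weight when $u_0$ has a neighbour outside $\cali$ at which $l$ is itself large and which has not yet been processed, so the actual pairing in \cite{NJEMS,LNRed} is organised with more care — peeling off the over-the-top directions in order and reducing to a genuinely finite configuration — and one must also check that the orbit-wise regrouping is legitimate for the (a priori only conditionally convergent, though packet-wise finite) class (C) sum. I would quote this argument from loc.\ cit.; since $\cali$ enters only through the inert inequality $l|_\cali\prec Z_K|_\cali$, which never interacts with the directions $u_0\in\calv\setminus\cali$ used by the pairing, that argument transfers without change, which is what ``completely identical proof'' means here.
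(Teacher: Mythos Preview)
Your proposal is correct and takes essentially the same approach as the paper: the paper's own ``proof'' is nothing more than the sentence that the identical combinatorial cancellation of \cite{NJEMS} and \cite[\S 5.3, Lemma 5.3.3]{LNRed} used for \eqref{eq:Qbb} carries over verbatim. Your three-class decomposition and sketch of the sign-reversing involution spell out more than the paper does, and your explicit acknowledgment that the delicate weight-invariance step must be quoted from loc.\ cit.\ matches the paper's stance exactly.
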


\subsection{}
Let us  introduce the function $\mathfrak{s}: \{\mbox{set of graphs}\} \to \Z$,
such that
\begin{equation}\label{eq:defh}
\sum_{\mathcal {T}' \subset \mathcal {T}}\mathfrak{s}(\mathcal {T}') =  \overline{\mathfrak{sw}}(\calt)
\ \ \mbox{(with the convention
$\mathfrak{s}(\emptyset)=\overline{\mathfrak{sw}}(\emptyset)=0$)}.
\end{equation}
By induction on $|\mathcal {V}(\calt)|$ one shows that $\mathfrak{s}$ is uniquely defined by
(\ref{eq:defh}). Moreover, the property
  $\overline{\mathfrak{sw}}(\cup_{i=1}^r\calt_i) =
  \sum_{i=1}^r\overline{\mathfrak{sw}}(\calt_i)$,
  valid for several connected components,  transforms into
$\mathfrak{s}(\cup_{i=1}^r\calt_i)=0$ whenever $r\geq 2$.
Furthermore, by combinatorial cancellation (or by M\"obius invertion)
\begin{equation}\label{eq:mobius}
\mathfrak{s}(\mathcal {T}) = \sum_{\cali \subset \calv} \,(-1)^{|\calv|- |\cali|} \, \overline{\mathfrak{sw}}(\calt(\cali)).
\end{equation}

\begin{lemma}\label{lem:hsum} For any $\cali\subset \calv$
the following identity holds.
\begin{equation}\label{eq:hosszu}
 \mathfrak{s}(\calt(\cali)) =
\sum_{(l,\calj)\subset R(\calv\setminus \cali)\setminus
\cup_{v\in\cali} R(\calv\setminus (\cali\setminus v))}
(-1)^{|\calj|+1} w(l, \calj).
\end{equation}
\end{lemma}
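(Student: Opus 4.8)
Write $F(\cali)$ for the right side of (\ref{eq:hosszu}). I would prove $F=\mathfrak{s}\circ\calt(\cdot)$ by checking that $F$ satisfies the same defining recursion as $\mathfrak{s}$, namely
\begin{equation*}
\sum_{\cali'\subset\cali}F(\cali')=\overline{\mathfrak{sw}}(\calt(\cali))\qquad\text{for all }\cali\subset\calv .
\end{equation*}
Since the connected full subgraphs of $\calt(\cali)$ are exactly the $\calt(\cali')$ with $\cali'\subset\cali$ and $\overline{\mathfrak{sw}}$ is additive over components, the analogous identity for $\mathfrak{s}$ holds by (\ref{eq:defh}); hence, by the uniqueness of the solution of that recursion (induction on $|\cali|$), $F$ and $\mathfrak{s}\circ\calt(\cdot)$ coincide, and Möbius inversion yields (\ref{eq:hosszu}) together with (\ref{eq:mobius}). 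So everything reduces to the displayed identity, which I would prove in two steps: a combinatorial step collapsing $\sum_{\cali'\subset\cali}F(\cali')$ into a single cube sum, and an ``Euler characteristic'' step identifying that sum.

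\textbf{Step 1: collapsing the sum.} First I would observe that a cube $(l,\calj)$ lying in $R(\calv\setminus\cali)$ automatically has $\calj\subset\cali$ and $l|_v=Z_K|_v$ for $v\notin\cali$, and that for such a cube the condition ``$(l,\calj)\subset R(\calv\setminus\cali')\setminus\bigcup_{v\in\cali'}R(\calv\setminus(\cali'\setminus v))$'' holds for exactly one set $\cali'\subset\cali$, namely $\cali'=\{v\in\cali:\ l|_v<Z_K|_v\}$. (Membership in $R(\calv\setminus\cali')$ forces $l|_v=Z_K|_v$ for $v\in\cali\setminus\cali'$, while avoidance of the walls $R(\calv\setminus(\cali'\setminus v))$ is equivalent to $l|_v<Z_K|_v$ for every $v\in\cali'$.) This is the same face-bookkeeping that passes from (\ref{eq:Qbb}) to (\ref{eq:QbbI}). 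It gives
\begin{equation*}
\sum_{\cali'\subset\cali}F(\cali')=\sum_{(l,\calj)\subset R(\calv\setminus\cali)}(-1)^{|\calj|+1}w(l,\calj)=:G(\cali).
\end{equation*}

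\textbf{Step 2: $G(\cali)=\overline{\mathfrak{sw}}(\calt(\cali))$.} This is the crux. I would restrict $\chi=\chi_\calt$ to the affine slice $\{x:\ x|_{\calv\setminus\cali}=Z_K|_{\calv\setminus\cali}\}$. Using the adjunction relations $(Z_K,E_v)_\calt=E_v^2+2$ and the fact that $(\,,\,)_{\calt(\cali)}$ is the restriction of $(\,,\,)_\calt$, a short computation gives, for $y\in L(\calt(\cali))\otimes\Q$,
\begin{equation*}
\chi_\calt\bigl(y+Z_K|_{\calv\setminus\cali}\bigr)=\chi_{\calt(\cali)}(y+D),\qquad D:=Z_K(\calt(\cali))-\pi_\cali Z_K\in L'(\calt(\cali)),
\end{equation*}
and moreover $D\leq 0$, because $(D,E_v)_{\calt(\cali)}=(Z_K|_{\calv\setminus\cali},E_v)_\calt\geq 0$ for $v\in\cali$ and the form is negative definite. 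Consequently $w(l,\calj)=w_{\calt(\cali)}(l|_\cali+D,\calj)$ for every cube in $R(\calv\setminus\cali)$, so $(l,\calj)\mapsto(l|_\cali+D,\calj)$ identifies $G(\cali)$ with the weighted cube sum of $\calt(\cali)$ over the box $R(D,Z_K(\calt(\cali)))$ taken on the coset $D+L(\calt(\cali))$. Applying the symmetry $\chi_{\calt(\cali)}(\,\cdot\,)=\chi_{\calt(\cali)}(Z_K(\calt(\cali))-\,\cdot\,)$ maps this to the weighted cube sum over $R(0,\pi_\cali Z_K)$ on the \emph{integral} lattice $L(\calt(\cali))$ --- this is where the numerical Gorenstein hypothesis on $\calt$ enters, since it makes $\pi_\cali Z_K$ integral, so the reflected lattice is $L(\calt(\cali))$ --- and since $\pi_\cali Z_K=Z_K(\calt(\cali))-D\geq Z_K(\calt(\cali))$, formula (\ref{eq:Qb}) applied to $\calt(\cali)$ with $b=\pi_\cali Z_K$ (extended additively to the, possibly disconnected, $\calt(\cali)$) evaluates it to $\overline{\mathfrak{sw}}(\calt(\cali))$. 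Combining Steps 1 and 2 completes the argument.

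\textbf{Expected main obstacle.} The delicate part is Step 2: keeping exact track of \emph{which} lattice coset and \emph{which} box the cube sum for $\calt(\cali)$ lives on, given that $\calt(\cali)$ need not itself be numerically Gorenstein and that $D$ is generally non-integral. The numerical Gorenstein assumption on $\calt$ is used precisely through the integrality of $\pi_\cali Z_K$, which is what lets the reflection symmetry return the computation to the integral lattice $L(\calt(\cali))$ and to a box containing $R(0,Z_K(\calt(\cali)))$, where (\ref{eq:Qb}) applies. I would therefore double-check the slice identity for $\chi_\calt$ and the inequality $D\leq 0$ carefully, and confirm that (\ref{eq:Qb}) --- together with the cube cancellation of \cite{NJEMS,LNRed} behind it --- applies verbatim to boxes $R(0,b)$ with $b\geq Z_K$ and to non-connected graphs.
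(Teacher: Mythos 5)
Your proposal is correct and follows essentially the paper's own route: your Step 2 is exactly the paper's identity (\ref{eq:Qb3}) (obtained there via the reflection $\chi(l)=\chi(Z_K-l)$, the inequality $Z_K|_\cali-Z_K(\calt(\cali))\geq 0$ from negative definiteness, and an application of (\ref{eq:Qb}) to the possibly disconnected $\calt(\cali)$), while your Step 1 together with the uniqueness/M\"obius-inversion argument is the paper's combination of (\ref{eq:mobius}) with the combinatorial cancellation over the faces $R(\calv\setminus\calj)$. The only differences are cosmetic (your slice identity plus a second reflection in $\calt(\cali)$ amounts to the paper's single reflection in $\calt$; also, in your opening paragraph the subgraphs $\calt(\cali')$, $\cali'\subset\cali$, are \emph{all} full subgraphs of $\calt(\cali)$, not only the connected ones, which is what the recursion (\ref{eq:defh}) actually requires).
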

\begin{proof}

 Since for any $l_\cali\in L(\calt(\cali))$ one has $\chi_{\calt(\cali)}(l_\cali)=
 \chi(j_\cali(l_\cali))=\chi(Z_K-j_\cali(l_\cali))$, (\ref{eq:Qb}) implies
 \begin{equation}\label{eq:Qb2}
\sum_{(l, \calj) \subset R(\calv\setminus \cali)} (-1)^{|\calj|+1} w(l, \calj) =
\sum_{(l, \calj) \subset R(0,Z_K|_\cali)} (-1)^{|\calj|+1} w(l, \calj),
\end{equation}
where the second sum is considered in the lattice $L(\calt(\cali))$.
We claim that in this lattice  $D:=Z_K|_\cali- Z_K(\calt(\cali))\geq 0$.
Indeed, by the two adjunction formulae,
for any $v\in\cali$, $(E_v, D)=-(E_v, Z_K|_{\calv\setminus \cali})\leq 0$
(since $Z_K|_{\calv\setminus \cali}$ is effective),
hence $D\in\calS'(\calt(\cali))$, and $D\geq 0$. In particular, the
right hand side of (\ref{eq:Qb2}) via (\ref{eq:Qb}) is $\overline{\mathfrak{sw}}(\calt(\cali))$. This gives for any $\cali\subset \calv$
 \begin{equation}\label{eq:Qb3}
\overline{\mathfrak{sw}}(\calt(\cali))=
\sum_{(l, \calj) \subset R(\calv\setminus \cali)} (-1)^{|\calj|+1} w(l, \calj).
\end{equation}

 By  (\ref{eq:mobius}) and (\ref{eq:Qb3}) and combinatorial cancelation:
\begin{equation*}
\mathfrak{s}(\calt(\cali))=  \sum_{\calj\subset \cali} (-1)^{|\cali|-|\calj|}\cdot
\overline{\mathfrak{sw}}(\calt(\calj))=
 \sum_{\calj\subset \cali} (-1)^{|\cali|-|\calj|}
 \, \sum_{(l,\calk)\subset R(\calv\setminus \calj)}
 (-1)^{|\calk|+1}w(l,\calk),
\end{equation*}
which equals the right hand side of (\ref{eq:hosszu}).
\end{proof}
\noindent
Then, for any $\calj\not=\emptyset$,  (\ref{eq:QbbI}) and  Lemma \ref{lem:hsum} imply
$$\sum_{\calk\supset \calj} \mathfrak{s}(\calt(\calk))=q^\calt_{0,\calj}(Z_K).$$
Therefore, for any $\cali$,
\begin{equation*}
\begin{split}
Q^\calt_{0,\cali}(Z_K)&=\sum_{\emptyset\not=\calj\subset \cali }
(-1)^{|\calj|+1}q^\calt_{0,\calj}(Z_K)=\sum_{\emptyset\not=\calj\subset \cali }
(-1)^{|\calj|+1}\ \sum_{\calk\supset \calj} \mathfrak{s}(\calt(\calk))\\ &
=\sum_{\calk\subset\calv}\mathfrak{s}(\calt(\calk))\sum_{\emptyset\not=
\calj\subset \calk\cap\cali} (-1)^{|\calj|+1}
=\sum_{\calk\subset \calv,\, \calk\cap\cali\not=\emptyset} \mathfrak{s}(\calt(\calk)) \\ &
=\sum_{\calk\subset \calv} \mathfrak{s}(\calt(\calk))-
\sum_{\calk\subset \calv\setminus \cali} \mathfrak{s}(\calt(\calk))
=\overline{\mathfrak{sw}}(\calt)-\overline{\mathfrak{sw}}(\calt\setminus \cali)=
\textnormal{pc}^{\pi_\cali(\calS'_{\R})}(Z_{0}(\mathbf{t}_\cali)).
\end{split}
\end{equation*}
This ends the proof of Theorem \ref{th:Z_Kbound}.
\end{proof}
The formulae (\ref{eq:Qb}), (\ref{eq:Qbb}), (\ref{eq:QbbI}) and (\ref{eq:Qb3})
provide explicit expressions for
$\overline{\mathfrak{sw}}(\calt)$, $Q^\calt_{0,\calv}(Z_K)$,
$q^\calt_{0,\cali}(Z_K)$ and $\overline{\mathfrak{sw}}(\calt(\cali))$
in terms of weighted cubes of different faces of $R(0,Z_K)$.

\end{document}